\newcommand{\subjclass}[2][2010]{%
  \let\@oldtitle\@title%
  \gdef\@title{\@oldtitle\footnotetext{#1 \emph{Mathematics subject classification.} #2}}%
}
\newcommand{\keywords}[1]{%
  \let\@@oldtitle\@title%
  \gdef\@title{\@@oldtitle\footnotetext{\emph{Key words and phrases.} #1.}}%
}
\pgfplotsset{compat=newest}
\def\var{{\rm Var}}
\def\XXint#1#2#3{{\setbox0=\hbox{$#1{#2#3}{\int}$ }
\vcenter{\hbox{$#2#3$ }}\kern-.6\wd0}}
\newcommand{\dd}{\mathrm{d}}
\newcommand{\by}[2]{\stackrel{#1}{#2}}
\newcommand{\R} {\mathbb{R}}
\newcommand{\Z} {\mathbb{Z}}
\newcommand{\nn}{\nonumber}
\newcommand{\N} {\mathbb{N}}
\newcommand{\rn}[1]{\mathrm{#1}}  
\newcommand{\ff}{\mathcal{F}} 
\newcommand{\ww}{\mathcal{W}}
\newcommand{\zz}{\mathcal{Z}}
\newcommand{\sn}{\mathcal{N}} 
\newcommand{\orf}{\psi} 
\newcommand{\tbd}{\beta} 
\newcommand{\fcon}{\theta_\orf} 
\newcommand{\scon}{\rho}
\providecommand{\onorm}[1]{\norm{#1}_\orf} 
\providecommand{\ee}[1]{E[#1]} 
\providecommand{\bee}[1]{E\left[#1\right]}
\providecommand{\idkt}[1]{\mathbbm{1}_{#1}} 
\DeclarePairedDelimiter{\abs}{\lvert}{\rvert}
\DeclarePairedDelimiter{\norm}{\lVert}{\rVert}
\DeclarePairedDelimiter{\bnorm}{\bigg\lVert}{\bigg\rVert}
\providecommand{\Abs}[1]{\Bigr\lvert#1\Bigl\rvert}
\providecommand{\mc}[1]{\mathcal{#1}}
\providecommand{\ms}[1]{\mathscr{#1}}
\newtheorem{theorem}{Theorem}
\newtheorem{lemma}[theorem]{Lemma}
\newtheorem{corollary}[theorem]{Corollary}
\newtheorem{proposition}[theorem]{Proposition}
\theoremstyle{definition}  
\newtheorem{definition}[theorem]{Definition}
\newtheorem{remark}[theorem]{Remark}
\newtheorem{example}[theorem]{Example}
\begin{document}

\title{On the rate of convergence of the martingale central limit theorem in Wasserstein distances}

\author[1]{Xiaoqin Guo \thanks{The work of XG is supported by Simons Foundation through Collaboration Grant for Mathematicians \#852943.}}

\affil[1]
{
Department of Mathematical Sciences, 
University of Cincinnati , 2815 Commons Way, Cincinnati, OH 45221, USA}

\subjclass{
60G50 
}

\keywords{
martingales, central limit theorem, Wasserstein distances,  rate of convergence, Orlicz-norm for random variables, N-functions}

\maketitle

\begin{abstract}
For martingales with a wide range of integrability, 
we will quantify
 the rate of convergence of the central limit theorem via Wasserstein distances  of order $r$, $1\le r\le 3$. 
Our bounds are in terms of Lyapunov's coefficients and the $\ms L^{r/2}$ fluctuation of the total conditional variances. We will show that  our  Wasserstein-1 bound is optimal up to a multiplicative constant.
\end{abstract}

\tableofcontents

\section{Introduction}
In this paper, we consider the rate of convergence of the martingale central limit theorem (CLT) under Wasserstein distances. Let $Y_\cdot=\{Y_m\}_{m=1}^n$ be a square integrable martingale difference sequence (mds)  with respect to  $\sigma$-fields $\{\ff_m\}_{m=0}^n$. Here $\ff_0$ denotes the trivial $\sigma$-field. For $1\le m\le n$, let $s_m^2=\sum_{i=1}^mE[Y_i^2]$ and
\begin{align}\label{def:notations}
\sigma_m^2(Y)&=E[Y_m^2|\ff_{m-1}],\nn\\
V_n^2&=\frac{1}{s_n^2}\sum_{i=1}^n\sigma_i^2(Y),\nn\\
X_m&=\frac{1}{s_n}\sum_{i=1}^m Y_i.
\end{align}
For a mds $\{Y_i\}_{i=1}^\infty$, when $\lim_{n\to\infty}V_n^2=1$ in probability and  Lindeberg's condition is satisfied, it is well-known that
\[
X_n\Rightarrow \sn\quad \text{ as }n\to\infty,
\]
where $\sn\sim\sn(0,1)$ denotes a standard normal random variable. 

To quantify such a convergence in distribution, 
one of the most important metrics is the Wasserstein distance which has deep roots in the optimal transport theory \cite{Villani}.
Recall that, for two probability measures $\mu,\nu$ on $\R$, their {\it Wasserstein distance} (also called {\it minimal distance}) $\ww_r$ of order $r$, $r\ge 1$, 
is defined by
\[
\ww_r(\mu,\nu)=\inf\{\norm{U-V}_r: U\sim\mu, V\sim\nu\},
\]
where $\norm{U}_r=E[|U|^r]^{1/r}$. 
With abuse of notations, we may use random variables as synonyms of their distributions. E.g., for $U\sim\mu, V\sim\nu$, we may write $\ww_r(\mu,\nu)$ as $\ww_r(U,V)$.

When $r=1$, recall that $\ww_1$ admits the following alternative representations:
\[
\ww_1(U,V)=\sup_{h\in\rm{Lip}_1}\left\{\ee{h(U)}-\ee{h(V)}\right\}=\int_\R\Abs{F_U(x)-F_V(x)}\dd x
\]
where $\rm{Lip}_1$ denotes the set of all 1-Lipschitz functions, and $F_U(x):=P(U\le x)$.
\medskip

Throughout the paper, we use $c, C$ to denote positive constants which may change from line to line. 
Unless otherwise stated, $C_p, c_p$ denote constants depending only on the parameter $p$.
We write $A
\lesssim B$ if $A\le CB$, and 
$A\asymp B$ if $A\lesssim B$ and $A\gtrsim B$. 
We also use notations $A\lesssim_p B$, $A\asymp_p B$ to indicate that the multiplicative constant depends on the parameter $p$.

\subsection{Earlier results in the literature}

There is an immense literature on the convergence rate of the CLT for independent random variables. 
Such results are often phrased in terms of Lyapunov's coefficient (i.e., the term in the $\ms L^p$ Lyapunov condition). To be specific, for $p\ge 1$, set\footnote{Note that the meaning of our $L_p$ notation is slightly different from that of \cite{Bikjalis-66,Hall-Heyde-80,Haeusler-88,Rio-09,Bobkov-18} in the literature whose $L_p$ means the $L_p^p$ in our paper. }
\begin{equation}\label{eq:Lyap-lp}
L_p=L_p(Y):=\frac{1}{s_n}\left(
\sum_{i=1}^n \ee{|Y_i|^p}
\right)^{1/p}.
\end{equation}
Note that typically $L_p$ is of size $O(n^{1/p-1/2})$.

When $\{Y_i\}_{i=1}^n$ is a centered independent sequence, a nonuniform estimate of 
Bikjalis \cite{Bikjalis-66} implies
$\ww_1(X_n,\sn)\le cL_p^p$, $p\in(2,3]$, extending the $\ww_1$ bounds of Esseen \cite{Esseen-58},  Zolotarev \cite{Zolo-64}, and Ibragimov \cite{Ibra-66}. For $r>2$, 
Sakhanenko \cite{Sakhanenko-85} proved that $\ww_r(X_n,\sn)\le cL_r$ which is optimal for independent $\ms L^r$-integrable variables. For $1\le r\le 2$, it is established by
Rio \cite{Rio-98, Rio-09}  that 
\begin{equation}\label{eq:bobkov}
\ww_r(X_n,\sn)
\le 
c_r L_{r+2}^{(r+2)/r}.
\end{equation}
In particular, when $\{Y_i\}_{i=1}^n$ have roughly the same $\ms L^{r+2}$ moments, the  $\ww_r$ bound in \eqref{eq:bobkov}  achieves the optimal rate $O(n^{-1/2})$. 
Bobkov \cite{Bobkov-18} confirmed Rio's conjecture that \eqref{eq:bobkov} holds for all $r\ge 1$.
 Cf. \cite{Rio-09, Bobkov-18} and references therein. 
 For further developments, see e.g. \cite{Zhai-18, FSX-19, CFP-19, Fathi-19, EMA-20, Bonis-20, LA-23} for work in the multivariate setting and e.g. \cite{Fang-19, LA-23} for results on random variables with local-dependence.
\medskip

It is natural to 
 to ask whether the martingale CLT can be quantified by similar Wasserstein metric bounds in terms of $L_p$.
 However, 
despite its  theoretical importance,  
 there are very few such results for (general) martingales compared to the independent case, let alone questions on optimal rates.

When $\{Y_i\}_{i=1}^n$ is a mds, the non-uniform bound of the distribution functions  by Joos \cite{Joos-93} (which generalizes Haeusler and Joos \cite{HJ-88}) implies that
\begin{equation}\label{eq:haeusler-joos}
\ww_1(X_n,\sn)\le c_{p,q}\left( L_p^{p/(1+p)}+\norm{V_n^2-1}_{q}^{q/(2q+1)}	\right) 
\quad \text{ for }2<p<\infty, q\ge 1.
\end{equation}
In the case $M=\max_{i=1}^n\norm{Y_i}_\infty<\infty$,  the nonuniform bound of Joos \cite{Joos-91} implies
\begin{equation}\label{eq:joos-91}
\ww_1(X_n,\sn)\le c_q\left(
\frac{M}{s_n}\log\big(e+\frac{s_n^2}{M^2}\big)+
\norm{V_n^2-1}_{q}^{q/(2q+1)}
\right) \, \text{ for }q\ge 1.
\end{equation}
Still for the $\ms L^\infty$ case, Dung, Son, Tien \cite{DST-14} improved the last term in \eqref{eq:joos-91} to be $C_q\norm{V^2-1}_{q}^{1/2}$ for any $q>1/2$. 
 Under the condition
\begin{equation}\label{eq:assume-v=1}
P(V_n^2=1)=1,
\end{equation} 
R\"ollin's \cite[(2.1)]{Rollin-18} result inferred that (See also \cite[Lemma 2.1]{FM-20})
\begin{equation}\label{eq:rollin}
\ww_1(X_n,\sn)\lesssim L_3.
\end{equation}
Fan and Ma \cite{FM-20} dropped the condition \eqref{eq:assume-v=1} and obtained
\begin{equation}\label{eq:fanma}
\ww_1(X_n,\sn)\lesssim_q
L_3+\norm{V_n^2-1}_{q}^{1/2}+\frac{1}{s_n}\max_{i=1}^n\norm{Y_i}_{2q}, \quad \forall q\ge 1.
\end{equation}
Recently,  assuming \eqref{eq:assume-v=1},  Fan, Su \cite[Corollary~2.5]{FS-22} implicitly extended \eqref{eq:rollin} to be
\begin{equation}\label{eq:fan-su}
\ww_1(X_n,\sn)\lesssim L_p \quad \text{ for }2<p\le 3.
\end{equation}

Dedecker, Merlev\`ede, and Rio \cite{DMR-22} proved  $\ww_r$ bounds, $r\in(0,3]$, that involve the quantities $\sum_{i=\ell}^nE[Y_i^2|\ff_{\ell-1}]-E[Y_i^2]$  (instead of $V_n^2-1$ in our bounds), $2\le\ell\le n$, which can be suitably bounded  in many situations.

Readers may refer to \cite{DMR-09, Dedecker-Merlevede-11, Rollin-18, FS-22, DMR-22} for Wasserstein bounds for the martingale CLT under other special conditions (e.g. the sequence being stationary, $\sigma_i$'s being close to be deterministic, $\sigma_i$'s are uniformly bounded from below, or certain variants of the $\ms L^{\infty}$ case, etc.).

\begin{remark}
Unlike the Wasserstein metric, 
 the Kolmogorov distance bounds for the martingale CLT has been thoroughly investigated since the 1970s. One of the earliest results is due to Heyde, Brown \cite{Heyde-Brown-70} which states that, for $2<p\le 4$,
\begin{equation}\label{eq:heyde-brown}
d_K(X_n,\sn):=\sup_{x\in\R}\abs{F_{X_n}(x)-F_{\sn}(x)}
\lesssim_p
L_p^{p/(p+1)}+\norm{V_n^2-1}_{p/2}^{p/(2p+2)}.
\end{equation}
When the mds is $\ms L^\infty$, i.e., $\max_{i=1}^n\norm{Y_i}_\infty=M<\infty$,  Bolthausen \cite{Bol-82} showed that 
\begin{equation}
\label{eq:bolthausen}
d_K(X_n,\sn)
\lesssim_M
\frac{n\log n}{s_n^3}+
\min\bigg\{\norm{V_n^2-1}_{\infty}^{1/2},
\norm{V_n^2-1}_{1}^{1/3}\bigg\}
\end{equation}
and that the first term $\tfrac{\log n}{n^{1/2}}$ is optimal for the case $s_n^2=n$. 
Haeusler \cite{Haeusler-84,Haeusler-88} generalized \eqref{eq:heyde-brown} to $2<p<\infty$ and showed that the first term $L_p^{p/(p+1)}$ is exact.
Joos \cite{Joos-93} proved that the second term in \eqref{eq:heyde-brown} can be replaced by $\norm{V_n^2-1}_{q}^{q/(2q+1)}$,  $q\ge 1$.  Mourrat \cite{Mourrat-13} improved the second term in \eqref{eq:bolthausen} to $(\norm{V_n^2-1}_{q}+s_n^{-2})^{q/(2q+1)}$, $q\ge 1$. 
Cf. also \cite{HJ-88, Joos-93, Gra-Haeu-00, Ouchti-05, ElM-Ouchti-07, Fan-19, FS-22} and references therein for work in this direction.
\end{remark}

\subsection{Motivation and our contributions}

Our paper  concerns  the $\ww_r$ convergence rates for the martingale CLT, $1\le r\le 3$. 

Let us comment on some weaknesses of the aforementioned $\ww_1$ bounds. 

When the martingale differences are at least $\ms L^p$-integrable, $2<p\le 3$, the best $\ww_1$ rates given by \eqref{eq:rollin}, \eqref{eq:fan-su}, \eqref{eq:fanma} are typically $n^{1/p-1/2}\ge n^{-1/6}$, leaving a big gap between the rate $O(n^{-1/2}\log n)$ in the $\ms L^\infty$ case \eqref{eq:joos-91}, not to mention that the condition \eqref{eq:assume-v=1} imposed in \eqref{eq:fan-su} is too restrictive for general martingales. 
Compared to \eqref{eq:fan-su}, the exponent of $L_p$ in \eqref{eq:haeusler-joos} is clearly not optimal, at least for  $2<p\le 3$.  
But 
\eqref{eq:haeusler-joos} does not assume \eqref{eq:assume-v=1}, and it offers a typical $\ww_1$ rate $O(n^{(2-p)/(2p+2)})$ which is better than $n^{-1/6}$ for $p>7/2$. However, the constant $c_p$ in \eqref{eq:haeusler-joos} is expected to grow linearly as $p\to\infty$, 
rendering \eqref{eq:haeusler-joos} a useless bound when  $p$ is bigger than $n^{1/2}$.
\medskip

Notice that all of the results discussed above are $\ww_1$ bounds. There are rarely any results on the $\ww_r$ bounds, $r>1$, in terms of $L_p$ for (general) martingales. 
\medskip

Can we obtain $\ww_r$ bounds, $r\ge 1$, in terms of the Lyapunov coefficient \eqref{eq:Lyap-lp}  for the martingale CLT that generalize all of the previous results \eqref{eq:haeusler-joos}, \eqref{eq:joos-91}, \eqref{eq:rollin}, \eqref{eq:fanma}, \eqref{eq:fan-su}? Further, what are the optimal rates and how to justify their optimality? 

Is it possible to get Wasserstein distance bounds for the CLT so that the constant coefficents do not blow up as the integrability of the martingale increases? 
Such estimates would be important when we have a limited sample size, and they allow us to exploit the integrability of the martingale to obtain better rates.

In theory and in applications, 
there are numerous stochastic processes that do not fit into any of the $\ms L^p$ category, $2<p<\infty$, cf. \cite{vdG-L-13,Kulik-20,VGNA-20}. 
Can we quantify the CLT for martingales with much wider spectrum of integrability than $\ms L^p$, $p\ll n^{1/2}$?


Motivated by these questions, we will prove the following results.
\begin{enumerate}[(1)]
\item We will obtain $\ww_r$, $1\le r\le 3$, convergence rates for martingales which are {\it Orlicz}-integrable. For instance, if 
\begin{equation}\label{eq:orlicz-informal}
A:=\frac{1}{n}\sum_{i=1}^n\orf(|Y_i|)<\infty
\end{equation}
for appropriate convex function $\orf$ that grows at most polynomially fast, then
\begin{equation}\label{eq:ww1-informal}
\ww_1(X_n,\sn)\lesssim_\orf\frac{A\vee1}{s_n}\orf^{-1}(n)+\norm{V_n^2-1}_{1/2}^{1/2},
\end{equation}
where $\orf^{-1}$ denotes the inverse function of $\orf$.

Our result greatly generalizes the known $\ww_1$ bounds \eqref{eq:haeusler-joos},  \eqref{eq:rollin}, \eqref{eq:fanma}, \eqref{eq:fan-su} for $\ms L^p$ martingales, $2<p\le 3$. Moreover, we will prove that both terms $\tfrac{1}{s_n}\orf^{-1}(n)$ and $\norm{V_n^2-1}_{1/2}^{1/2}$ in this $\ww_1$ bound are optimal.

\item We will derive Wasserstein bounds whose constant coefficients do not depend on the integrability of the variables.  Take the $\ww_1$ distance for example, if \eqref{eq:orlicz-informal} holds for $\orf$ in a wide class of convex functions,  we show that
\begin{equation}\label{eq:ww1-log-informal}
\ww_1(X_n,\sn)
\lesssim 
\frac{A\vee1}{s_n}\orf^{-1}(n)\log n+\norm{V_n^2-1}_{1/2}^{1/2}.
\end{equation}
This explains the presence of the $\log n$ term in \eqref{eq:joos-91}, and it implies that if the mds is  $\ms L^\infty$ bounded, i.e., $M=\max_{i=1}^n\norm{Y_i}_\infty<\infty$, then
\[
\ww_1(X_n,\sn)
\lesssim 
\frac{M}{s_n}\log(e+\frac{s_n^2}{M^2})+\norm{V_n^2-1}_{1/2}^{1/2}.
\]
The novelty of this result lies not only on the fact that it encompasses an even larger spectrum (all the way up to $\ms L^\infty$) of integrability than our first bound \eqref{eq:ww1-informal}, 
but also that it yields a better bound than \eqref{eq:ww1-informal} when the order of the ``integrability" $\orf$ is bigger than the logarithm of the sample size (i.e., beyond $O(\log n)$).

\item  Similar bounds for the $\ww_r$ distances in terms of the Lyapunov coefficient and $\norm{V_n^2-1}_{r/2}$, $1<r\le 3$, will be established as well.

\end{enumerate}

\subsection{Structure of the paper}

The organization of our paper is as follows. 

 Subsection~\ref{subsec:prel} contains definitions of N-functions and the corresponding Orlicz norm for random sequences. 
In Section~\ref{sec:main}, we present our main  Wasserstein distance bounds (Theorems~\ref{thm:ww1-new},\ref{thm:ww2}, and \ref{thm:ww3}) and the optimality of the $\ww_1$ rate (Proposition~\ref{prop:optimal}). In Section~\ref{sec:special}, using Taylor expansion and Lindeberg's telescopic sum argument, we will derive $\ww_r$ bounds in terms of the conditional moments  for martingales with $V_n^2=1$ a.s.. 
As consequences,  $\ww_1,\ww_2$ bounds (Corollary~\ref{cor:v=1}) will be obtained for martingales that satisfy certain special conditions.

Section~\ref{sec:pf-main} is devoted to the proof of our main results. 
Our proof consists of the following components.
First, we truncate the martingale as Haeusler \cite{Haeusler-88} and elongate it as Dvoretzky \cite{Dvo-72}  to turn it into a sequence with bounded increments and $V_n^2=1$. We will bound the error of this modification in terms of the Lyapunov conefficient and $(V_n^2-1)$. (See Section~\ref{subsec:modify}.)
Then, as a crucial technical step, we use Young's inequality within conditional expectations to ``decouple" the Lyapunov coefficient and $\sigma_i^2(Y)$'s from the  conditional moments, and turn the $\ww_r$ bound to be an optimization problem over three parameters. This argument is robust enough for us to deal with martingales with very flexible integrability.
 Our another key observation is that
 there should be  different bounds for
the two different scenarios when the martingale  is 
``at most $\ms L^p$" and ``more integrable than $\ms L^p$".
By possibly sacrificing a small factor  (e.g. $\log n$), we can make the constants of the bound  independent of the integrability $\orf$, which leads to  bounds of type \eqref{eq:ww1-log-informal}.

Finally, in Section~\ref{sec:optimal} we construct examples to show that our $\ww_1$ bound is optimal. We leave some open questions in Section~\ref{sec:qs}.

\subsection{Preliminaries: N-functions and  an Orlicz-norm for sequences}\label{subsec:prel}

To generalize the notion of $\mc L^p$ integrability,  we recall the definitions of N-function and the corresponding Orlicz-norm in this subsection.
\begin{definition}\label{def:N-func}
A convex function $\orf:[0,\infty)\to[0,\infty)$ 
is called an {\it N-function} if  it  satisfies $\lim_{x\to 0}\orf(x)/x=0$, $\lim_{x\to\infty}\orf(x)/x=\infty$, and $\orf(x)>0$ for $x>0$.
For two functions $\orf_1,\orf_2\in[0,\infty)^{[0,\infty)}$, we write 
\[
\orf_1\preccurlyeq\orf_2 \quad\text{ or }\quad \orf_2 \succcurlyeq\orf_1
\] if  
$\frac{\orf_2(x)}{\orf_1(x)}$ is non-decreasing on $(0,\infty)$.
\end{definition}

Note that every N-function $\orf$ satisfies $\orf\succcurlyeq x$. 
 See \cite{Adams-03}. 
 
 Denote the Fenchel-Legendre transform of $\orf$ by $\orf_*\in[0,\infty)^{[0,\infty)}$, i.e.,
\[
\orf_*(x)=\sup\{xy-\orf(y):y\in\R\}  \quad \text{ for }x\ge 0.
\]
Then $\orf_*$ is still an N-function.Young's inequality states that
\begin{equation}
\label{eq:young}
xy\le \orf(x)+\orf_*(y) \quad\text{ for all }x,y\ge 0.
\end{equation}
Another useful relation between the pair $\orf, \orf_*$ is
\begin{equation}
\label{eq:orlicz-ineq}
x\le \orf^{-1}(x)\orf_*^{-1}(x)\le 2x, \quad\forall x\in[0,\infty).
\end{equation}
See \cite{Adams-03} for a proof and for more properties of  N-functions.

\begin{definition}
\label{def:orlicz-norm}
For any N-function $\orf$ and $n\in\N$, the {\it $\ms L^\orf$-Orlicz norm} of a random sequence $Y=\{Y_m\}_{m=1}^n$ with length $n$ is defined by 
\begin{equation}\label{eq:def-orlicz-norm}
\norm{Y}_\orf:=\inf\big\{c>0: \frac1n\sum_{i=1}^n\bee{\orf(|Y_i|/c)}\le 1\big\}.
\end{equation}
In particular, when $n=1$, i.e., $Y=Y_1$ is a single random variable, we still write
 \[
 \norm{Y_1}_\orf=\inf\{c>0:\ee{\orf(|Y_1|/c)}\le 1\}.
 \] 
When $\orf(x)=x^p$, $p\ge 1$, we simply write $\onorm{Y}$ as $\norm{Y}_p$.
Notice that
\begin{align}
\norm{Y}_p&=\left(\frac{1}{n}\sum_{i=1}^n\ee{|Y_i|^p}\right)^{1/p},\label{eq:y-lp}\\
\text{and }\quad \lim_{p\to\infty}\norm{Y}_p&=\max_{i=1}^n\norm{Y_i}_\infty=:\norm{Y}_\infty.\label{eq:def-y-infty}
\end{align}
\end{definition}

\begin{remark}
\begin{enumerate}[(a)]
 \item 
Using the property $\orf(\lambda x)\ge\lambda\orf(x)$ for $\lambda\ge1$ for N-functions, it is easily seen that the integrability condition 
 \eqref{eq:orlicz-informal} implies $\onorm{Y}\le A\vee 1$.
 
\item Since this paper concerns square-integrable martingales, we are only interested in sup-quadratic N-functions $\orf\succcurlyeq x^2$. 
For instance, $x^2\log(x+1)$, $x^p$ (with $p\ge 2$), $e^x-x-1$, $\exp(x^\beta)-1$, $\exp\left(
\ln(x+1)^\beta
\right)-1$ (with $\beta>1$) are among such N-functions. 
 
\item The meaning of our notation  $\norm{Y}_p$ is different from some work in the literature, e.g. \cite{Bol-82,FS-22} where it means $\max_{i=1}^n\norm{Y_i}_p$.  Note that $\norm{Y}_\orf\le \max_{i=1}^n\norm{Y_i}_\orf$.
 \item 
Removing the $\tfrac{1}{n}$ in \eqref{eq:y-lp} only changes a multiplicative factor of $\norm{Y}_p$.   However, for general $\onorm{\cdot}$, the presence of $\tfrac{1}{n}$ in \eqref{eq:def-orlicz-norm} is crucial for the definition--removing it would drastically change the meaning of the norm.
\end{enumerate}
\end{remark}

\section{Main results}\label{sec:main}
Recall $\norm{Y}_\infty$ in \eqref{eq:def-y-infty}. For the mds $\{Y_i\}_{i=1}^n$ and an N-function $\orf$, we generalize the notation $L_p$ in \eqref{eq:Lyap-lp} to be
\begin{equation}\label{def:L-orf}
L_\orf=\frac{1}{s_n}\onorm{Y}\orf^{-1}(n),
\quad
L_\infty=\frac{1}{s_n}\norm{Y}_\infty,
\end{equation}
where $\orf^{-1}$ denotes the inverse function of $\orf$.

Our first main results are two $\ww_1$ bounds that generalize  \eqref{eq:haeusler-joos}, \eqref{eq:joos-91}, \eqref{eq:rollin}, \eqref{eq:fanma}, \eqref{eq:fan-su}.

\begin{theorem}[$\ww_1$ bounds]
\label{thm:ww1-new}
Let $\{Y_m\}_{m=1}^n$ be a martingale difference sequence. Recall the notations $X_n, V_n^2, L_\orf, L_\infty, \preccurlyeq, \succcurlyeq$ in \eqref{def:notations}, \eqref{def:L-orf}, and Definition~\ref{def:N-func}.

\begin{enumerate}[(i)]
\item\label{item:thm-ww1-new-1} For any N-function $\orf\succcurlyeq x^2$,
\begin{equation}\label{eq:w1-general}
\ww_1(X_n,\sn)
\lesssim L_\orf\log\left(e+L_\orf^{-2}\right)+\norm{V_n^2-1}_{1/2}^{1/2}.
\end{equation}

\item\label{item:thm-ww1-new-2} For any N-function $\orf$ with $x^2\preccurlyeq\orf\preccurlyeq x^p$, $p>2$,
\begin{equation}\label{eq:w1-lp}
\ww_1(X_n,\sn)
\lesssim 
pL_\orf+\norm{V_n^2-1}_{1/2}^{1/2}.
\end{equation}
\end{enumerate}
As consequences,
 for any $p>2$,
\[
\ww_1(X_n,\sn)
\lesssim 
\min\left\{p,\, \log(e+L_p^{-2})\right\}L_p
+\norm{V_n^2-1}_{1/2}^{1/2}.
\]
In particular, for the $\ms L^\infty$ case, by \eqref{eq:def-y-infty},
\[
\ww_1(X_n,\sn)\lesssim 
L_\infty\log\left(e+L_\infty^{-2}\right)
+\norm{V_n^2-1}_{1/2}^{1/2}.
\]
\end{theorem}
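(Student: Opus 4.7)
The plan is to follow the classical Haeusler--Dvoretzky two-step recipe, suitably adapted to the flexible Orlicz setting. Fix a truncation level $K s_n$ (to be optimized later) and define
\[
\hat Y_i = Y_i \mathbf{1}_{|Y_i| \le K s_n} - E[Y_i \mathbf{1}_{|Y_i| \le K s_n} \mid \ff_{i-1}];
\]
then elongate the sequence by appending additional martingale increments whose conditional variances absorb the gap between $\sum_i E[\hat Y_i^2 \mid \ff_{i-1}]$ and $s_n^2$, producing a new mds $\{\tilde Y_i\}$ with $|\tilde Y_i| \le 2 K s_n$ and total conditional variance exactly $s_n^2$, so $\tilde V_n^2 = 1$ a.s.

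The truncation-recentering cost in $\ww_1$ is at most $\tfrac{2}{s_n}\sum_i E[|Y_i|\mathbf{1}_{|Y_i| > K s_n}]$. Since $\orf$ is convex with $\orf(0)=0$, the ratio $\orf(x)/x$ is non-decreasing, so on $\{|Y_i| > Ks_n\}$,
\[
|Y_i| \le \frac{K s_n}{\orf(K s_n/\onorm{Y})}\,\orf(|Y_i|/\onorm{Y}),
\]
and summing, together with the defining bound $\sum_i E[\orf(|Y_i|/\onorm{Y})] \le n$, yields a truncation error of order $\tfrac{nK}{\orf(K s_n/\onorm{Y})}$. The Dvoretzky elongation cost is a standard estimate contributing $\norm{V_n^2-1}_{1/2}^{1/2}$.

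For the tame martingale $\{\tilde Y_i\}$ I would use a Lindeberg telescoping argument combined with Taylor expansion of a Gaussian-smoothed Lipschitz test function (or equivalently Stein's equation). The cubic Taylor remainder is bounded by $\sum_i E[|\tilde Y_i|^3] \le 2Ks_n \sum_i E[\tilde Y_i^2] = 2 K s_n^3$, giving the naive bound $\ww_1(\tilde X_n, \sn) \lesssim K^{1/3}$; a Joos-type iteration (feeding the current $\ww_1$ bound into a sharper control of the cubic sum restricted to the bulk) promotes this to $K\log(e + 1/K)$. Combining the ingredients produces
\[
\ww_1(X_n,\sn) \lesssim K\log(e + 1/K) + \frac{nK}{\orf(K s_n/\onorm{Y})} + \norm{V_n^2 - 1}_{1/2}^{1/2}.
\]

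Finally, optimize over $K$. For Part~(i), choosing $K s_n / \onorm{Y} \asymp \orf^{-1}(n)$ forces $K \asymp L_\orf$ and makes the second term of order $L_\orf$, yielding the bound $L_\orf \log(e + L_\orf^{-2})$. For Part~(ii), the hypothesis $\orf \preccurlyeq x^p$ lets one exploit the polynomial structure: a direct $L_p$-type tame bound (via a $p$-dependent Stein calculation rather than the iterative one above) has dependence on $p$ that is only linear, and optimizing the truncation threshold against this $p$-linear tame bound produces the clean factor $pL_\orf$ with no logarithm. The main obstacle will be establishing the sharper $K\log(1/K)$ tame bound, since the naive cubic-moment estimate yields only $K^{1/3}$; promoting it to $K\log(1/K)$ requires a Joos--Bolthausen-style iterative improvement. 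A secondary technical point is decoupling $\onorm{Y}$ from the conditional variances $\sigma_i^2(Y)$ via Young's inequality inside conditional expectations, ensuring the final multiplicative constants do not blow up as the order of integrability $\orf$ grows.
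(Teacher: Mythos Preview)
Your high-level architecture (truncate, elongate to force $V_n^2=1$, Lindeberg on the tame sequence, optimize the truncation level) matches the paper's, and your bookkeeping of the truncation and elongation costs is essentially correct. The genuine gap is in how you pass from the naive $K^{1/3}$ bound to $K\log(e+K^{-1})$ for the tame martingale. A ``Joos-type iteration'' does not do this: the Bolthausen--Joos iterations work for the Kolmogorov distance because the smoothed indicator leaks a copy of $d_K$ back into the right-hand side, producing an implicit inequality one can solve. In the Wasserstein setting there is no mechanism by which the current $\ww_1$ bound re-enters the control of $E\sum_i|\tilde Y_i|^3$; the cubic sum is determined by the conditional moments, not by the metric you are trying to bound. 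So this step, as written, would stall at $K^{1/3}$.

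What the paper actually does is avoid iteration entirely. The Lindeberg telescoping with Gaussian smoothing (Proposition~\ref{prop:v=1}) yields not $\sum_iE[|\hat Y_i|^3]$ but the \emph{weighted} sum $E\sum_i|\hat Y_i|^3/\lambda_i^2$, where $\lambda_i^2=\sum_{j>i}\sigma_j^2+\beta^2 s_n^2$ is the remaining conditional variance plus the smoothing. The improvement from $K^{1/3}$ to $K\log(1/K)$ comes from exploiting the full range of $\lambda_i$ rather than the crude lower bound $\lambda_i\ge\beta s_n$. The key device is Proposition~\ref{prop:young-decoup}: apply Young's inequality \emph{inside} the conditional expectation, writing $|\hat Y_i|^3/\lambda_i^2\lesssim\fcon\scon[\orf(|\hat Y_i|/\fcon)+\orf_*(\hat Y_i^2/(\scon\lambda_i^2))]$, use $|\hat Y_i|\le\alpha$ to factor out $\sigma_i^2$, and then convert $\sum_i\sigma_i^2\,\orf_*(\alpha^2/(\scon\lambda_i^2))$ into the integral $\int_{\alpha^2}^{1+\alpha^2}\orf_*(\alpha^2/(\scon t))\,\dd t$ via the stopping times $\tau_t$. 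For part~(i) this integral is bounded using only $\orf_*\succcurlyeq x$, producing the $\log(1/\alpha^2)$ factor; for part~(ii) one compares $\orf_*$ to $x^{p/(p-1)}$ (Lemma~\ref{lem:succsim-domination}) and the integral contributes $1/(q-1)=p-1$ instead. In other words, the Young-inequality decoupling you list as a ``secondary technical point'' is in fact the main engine of the proof, and it replaces the iteration you had in mind.
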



\begin{remark}
\begin{enumerate}[(a)]
\item 
For any sup-quadratic N-function $\orf\succcurlyeq x^2$, by Lemma~\ref{alem:norm-dominate-ineq}, 
\[
s_n=\norm{Y}_2n^{1/2}\lesssim_{\orf(1)} \onorm{Y}n^{1/2}.
\] 
Hence, by the definition of $L_\orf$ in \eqref{def:L-orf}, 
\[
L_\orf\gtrsim_{\orf(1)} \orf^{-1}(n)n^{-1/2}
\gtrsim_{\orf(1)} \orf^{-1}(1)n^{-1/2}.
\]
Thus,   $\log(e+L_\orf^{-2})$ in Theorem~\ref{thm:ww1-new}\eqref{item:thm-ww1-new-1} is dominated by $\log(n)$.

\item Both bounds \eqref{eq:w1-general} and \eqref{eq:w1-lp} in Theorem~\ref{thm:ww1-new} have their strengths and weaknesses.

Apparently, the second bound \eqref{eq:w1-lp} gives better rates for mds which are at most polynomially integrable. E.g., for the ``barely more than square integrable" case $\orf=x^2\log(x+1)$, \eqref{eq:w1-lp} yields   $\frac{1}{s_n}(\frac{n}{\log n})^{1/2}(\log\log n)\onorm{Y}$ as the first term in the bound. However, \eqref{eq:w1-lp} becomes a trivial bound for $p\gg n^{1/2}$.

Although the first bound \eqref{eq:w1-general} is seemingly $\log(n)$ factor worse than the latter, 
it is applicable to martingales with more general integrability. For instance, 
if the martingale is exponentially integrable, taking $\orf=e^x-x-1$, Theorem~\ref{thm:ww1-new}\eqref{item:thm-ww1-new-1} yields  $\ww_1(X_n,\sn)\lesssim \onorm{Y}(\log n)^2/s_n+\norm{V_n^2-1}_{1/2}^{1/2}$, where the first term is better than the $O(n^{1/p}/s_n)$ rates offered by \eqref{eq:w1-lp} for any $p>0$.

\item 
In some sense, the term $\norm{V_n^2-1}$ quantifies the extent of decorrelation of the process. For independent sequences, $(V_n^2-1)$ is $0$.
In general, $(V_n^2-1)$ could converge to $0$ arbitrarily fast, depending on how decorrelated the process is. 

\item The term $(V_n^2-1)$ is essential for bounds of type 
\begin{equation}
\label{eq:w1-type}
\ww_r(X_n,\sn)\le C_1 L_\orf^a+C_2\norm{V_n^2-1}_c^b,
\end{equation}
i.e., we cannot allow $C_2$ to be $0$.
For example, let $B$ be such that $P(B=1\pm\tfrac{1}{2})=\tfrac{1}{2}$ and let $(\xi_i)_{i=1}^n$ be i.i.d. $\sn(0,1)$ variables. Define $Y_i=(B/n)^{1/2}\xi_i$. Then  $\lim_{n\to\infty}\ww_r(\sum_{i=1}^nY_i,\sn)=\ww_r(\sqrt{B}\xi_1,\sn)\neq 0$. Whereas, for $p> 2$, $L_p=O(n^{1/p-1/2})\to 0$ as $n\to\infty$. Thus $C_2\neq 0$.
\end{enumerate}
\end{remark}


Our next main results concern the $\ww_2$ and $\ww_3$ convergence rates.


\begin{theorem}[$\ww_2$ bounds]
\label{thm:ww2}
Let $\{Y_m\}_{m=1}^n$ be a martingale difference sequence. 
\begin{enumerate}[(i)]
\item\label{item:thm-ww2-1} 
For any N-function $\orf\preccurlyeq x^3$ such that $x\mapsto\orf(\sqrt{x})$ is convex,
\[
\ww_2(X_n,\sn)
\lesssim 
L_\orf+
\norm{V_n^2-1}_1^{1/2}.
\]

\item\label{item:thm-ww2-2} 
For any N-function $\orf\succcurlyeq x^3$,
\[
\ww_2(X_n,\sn)
\lesssim_{\orf(1)}
L_\orf+
\frac{\onorm{Y}}{s_n}\left[ng^{-1}(\orf^{-1}(n)^2)\right]^{1/4}
+\norm{V_n^2-1}_1^{1/2},
\]
where $g$ denotes the inverse function of $x\mapsto \tfrac{\orf(x)}{x}$.
\end{enumerate}
As consequences,
for $p\in(2,\infty)$,
\begin{align*}
\ww_2(X_n,\sn)
\lesssim 
\left\{
\begin{array}{lr}
L_p+\norm{V_n^2-1}_1^{1/2} &\text{ when }p\in(2,3],\\
n^{1/4+1/(2p^2-2p)}s_n^{-1}\norm{Y}_p+\norm{V_n^2-1}_1^{1/2}&\text{ when }p>3.
\end{array}
\right.
\end{align*}	
In particular, taking $p\to\infty$,
\begin{equation}\label{eq:ww2-L-infty}
\ww_2(X_n,\sn)
\lesssim 
\frac{n^{1/4}}{s_n}\norm{Y}_\infty+\norm{V_n^2-1}_1^{1/2}.
\end{equation}
\end{theorem}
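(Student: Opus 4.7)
The structure of the $\ww_2$ bound mirrors that of Theorem~\ref{thm:ww1-new}, so the plan is to follow the same two-step pipeline outlined in Section~\ref{subsec:modify}: first truncate each $Y_i$ at a level $M>0$ (to be chosen) and append an independent Gaussian padding to force $V_n^2=1$ a.s., then apply the $\ww_2$ analog of Corollary~\ref{cor:v=1} in this reduced setting. The Gaussian-padding step produces an $\ww_2$-error of order $\norm{V_n^2-1}_1^{1/2}$, via $\ww_2(\sqrt{V_n^2}\,\sn,\sn)\le\norm{\sqrt{V_n^2}-1}_2\le\norm{V_n^2-1}_1^{1/2}$ (using $|\sqrt v-1|^2\le|v-1|$ for $v\ge 0$). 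This already accounts for the last term in both parts of the theorem.

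In the reduced setting the $V_n^2=1$ case provides a $\ww_2$-bound governed by a conditional third-moment sum of the form $s_n^{-2}\sum_i E|\tilde Y_i|^3$ (raised to an appropriate power), where $\tilde Y_i:=Y_i\idkt{|Y_i|\le M}-E[Y_i\idkt{|Y_i|\le M}|\ff_{i-1}]$. The truncation cost is controlled by the Orlicz--Chebyshev type inequality
\[
E[Y_i^2\idkt{|Y_i|>M}]\le \onorm{Y}^2\, n\, E[\orf(|Y_i|/\onorm{Y})]/\orf(M/\onorm{Y}),
\]
which sums in $i$ (after dividing by $s_n^2$ and invoking the definition of $\onorm{Y}$) to an $\ww_2$-error of order $\onorm{Y}\,n^{1/2}/[s_n\,\orf(M/\onorm{Y})^{1/2}]$. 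All that remains is to balance these two contributions by an optimal choice of $M$.

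The regime split at $\orf\asymp x^3$ arises naturally from this optimization. For part~(i) with $\orf\preccurlyeq x^3$, the convexity of $x\mapsto \orf(\sqrt x)$ permits a Jensen-type manipulation bounding $\sum_i E|\tilde Y_i|^3$ by a constant times $M\, s_n^2$; the choice $M\asymp\onorm{Y}\,\orf^{-1}(n)$ then balances the two contributions and collapses the bound to $L_\orf$. For part~(ii) with $\orf\succcurlyeq x^3$, this choice over-truncates because the mds is already over-integrable for the third-moment term, so the optimal $M$ is smaller. Here I would apply Young's inequality in the form $|\tilde Y_i|^3=|\tilde Y_i|\cdot\tilde Y_i^2\le \orf(|\tilde Y_i|/c)+\orf_*(c\,\tilde Y_i^2)$ inside conditional expectations---using $\sum_i \sigma_i^2(\tilde Y)=s_n^2$ to absorb the second term---and then optimize the auxiliary parameter $c$ jointly with $M$; the inverse function $g^{-1}$ of $x\mapsto \orf(x)/x$ emerges from this optimization and yields the extra term $\frac{\onorm{Y}}{s_n}[n\, g^{-1}(\orf^{-1}(n)^2)]^{1/4}$. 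The main obstacle, as the introduction emphasizes, is carrying out this Young-type decoupling uniformly in $\orf$ so that the multiplicative constant depends only on $\orf(1)$ and not on the tail behavior of $\orf$ at infinity.
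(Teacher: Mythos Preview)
Your plan has a genuine gap: the ``$\ww_2$-bound governed by $s_n^{-2}\sum_i E|\tilde Y_i|^3$'' is not what the Lindeberg/Taylor expansion delivers in the martingale setting. From Proposition~\ref{prop:v=1} (with $r=2$) one gets
\[
\zz_2(\hat X_{n+1},\sn)\lesssim \tbd^2+\frac{1}{s_n^2}E\sum_{i=1}^n\frac{E_i|\hat Y_i|^3}{\lambda_i},
\qquad \lambda_i^2=\sum_{j>i}\sigma_j^2(\hat Y)+\tbd^2 s_n^2,
\]
and the factor $1/\lambda_i$ cannot be removed: $\lambda_i$ may be as small as $\tbd s_n$. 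In part~(i), even granting your bound $\sum_i E|\tilde Y_i|^3\le M s_n^2$, replacing $\lambda_i$ by its minimum $\tbd s_n$ and optimizing over $\tbd$ gives $\zz_2\lesssim (M/s_n)^{2/3}$, hence only $\ww_2\lesssim L_\orf^{1/3}$, not $L_\orf$. The paper avoids this loss by taking the \emph{other} branch of the minimum in Proposition~\ref{prop:v=1}, the term $\lambda_i^2 E_i[\orf(|Y_i|/\fcon)]/\orf(\lambda_i/\fcon)$, with \emph{no truncation} ($\alpha=\infty$): since $x^2/\orf(x)$ is decreasing, this branch is monotone in $\lambda_i$ and yields $\zz_2\lesssim \tbd^2(1+n/\orf(\tbd/\fcon))$, whence the choice $\tbd=L_\orf$ gives $\zz_2\lesssim L_\orf^2$. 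The convexity of $x\mapsto\orf(\sqrt x)$ is used inside Proposition~\ref{prop:v=1} (to ensure $\orf(\sigma_i)\le E_i[\orf(|Y_i|)]$), not as a Jensen manipulation on third moments.

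For part~(ii) your Young's-inequality instinct is correct, but again it must be applied to $|\hat Y_i|^3/\lambda_i$, not to $|\hat Y_i|^3$. The paper truncates at $\alpha\asymp L_\orf$ and then introduces a threshold parameter $k$: when $\lambda_i>k\alpha$ one bounds $1/\lambda_i\le 1/(k\alpha)$ and uses the raw third moment $\sum_i E|\hat Y_i|^3\lesssim_{\orf(1)} n\fcon^3$; when $\lambda_i\le k\alpha$ one uses Young's inequality together with the stopping-time integral bound (Proposition~\ref{prop:young-decoup}). The inverse function $g^{-1}$ arises exactly from optimizing over this threshold $k$; your one-parameter optimization in $c$ (with no $\lambda_i$) cannot reproduce it.

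A minor point: the identity $\ww_2(\sqrt{V_n^2}\,\sn,\sn)\le\norm{V_n^2-1}_1^{1/2}$ is not the mechanism by which the $\norm{V_n^2-1}_1^{1/2}$ term appears --- $\hat X_{n+1}$ is not Gaussian. If you invoke the full pipeline of Section~\ref{subsec:modify} (stopping time $T$ plus Gaussian padding $\hat Y_{n+1}$), the term does emerge, but via Burkholder's inequality in Proposition~\ref{prop:trunc}, not via your computation.
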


\begin{theorem}[$\ww_3$ bound]
\label{thm:ww3}
Let $\{Y_m\}_{m=1}^n$ be a martingale difference sequence. Then
\[
\ww_3(X_n,\sn)\lesssim
L_3+\norm{V_n^2-1}_{3/2}^{1/2}.
\]
\end{theorem}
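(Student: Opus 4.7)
The plan is to follow the two-stage scheme announced in Section~\ref{sec:pf-main}: first reduce to the case $V_n^2 \equiv 1$ a.s.\ via the truncation--elongation procedure of Section~\ref{subsec:modify}, then handle the reduced martingale by a Lindeberg-type swap combined with a third-order Taylor expansion.

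For the reduction, I would truncate each $Y_i$ at a level comparable to a Lyapunov scale (recentered to preserve the mds property) and then elongate by appending auxiliary martingale differences so that the total conditional variance is exactly $s_n^2$. The resulting modified mds $\tilde Y$ satisfies $\tilde V_n^2 \equiv 1$, and a Burkholder--Davis--Gundy plus Minkowski calculation bounds the $\ww_3$ distance between $\tilde X_n$ and $X_n$ by $L_3$ (truncation error) plus $\norm{V_n^2-1}_{3/2}^{1/2}$ (elongation error); the exponent $1/2$ reflects the square-root scaling in the variance correction, and the index $3/2 = r/2$ is the natural companion to the $\ww_3$ distance, exactly as it was $1 = r/2$ for $\ww_2$ in Theorem~\ref{thm:ww2}.

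For the reduced martingale, I would take i.i.d.\ $\sn(0,1)$ variables $(Z_i)_{i=1}^n$ independent of $\tilde Y$, set $\tilde\sigma_k = \sigma_k(\tilde Y)$, and use a Lindeberg interpolation that progressively swaps $\tilde Y_k/s_n$ with a Gaussian increment $\tilde\sigma_k Z_k/s_n$, starting from a genuine standard Gaussian (step $0$, using $\sum_k\tilde\sigma_k^2 = s_n^2$) and ending at $\tilde X_n$ (step $n$). For a test function $h$ with $\|h'''\|_\infty$ bounded, Taylor-expanding across each swap shows the zeroth-, first-, and second-order terms cancel (by the mds property and by $E[\tilde Y_k^2 \mid \ff_{k-1}] = \tilde\sigma_k^2$), leaving a third-order remainder controlled by $\|h'''\|_\infty \sum_k E[|\tilde Y_k|^3 + \tilde\sigma_k^3 |Z_k|^3]/s_n^3$. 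Jensen's inequality gives $\tilde\sigma_k^3 \le E[|\tilde Y_k|^3 \mid \ff_{k-1}]$, so the sum is $\lesssim L_3^3$, yielding the Zolotarev-type estimate of order $L_3^3$ against all such smooth test functions.

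The main obstacle is converting this smooth-test-function bound into a genuine $\ww_3$ estimate, since $\ww_r \lesssim \zeta_r^{1/r}$ is not free for $r \ge 2$. I would address this via Gaussian convolution smoothing: convolving both laws with an independent $\sn(0,\vep^2)$ perturbs $\ww_3$ by at most $O(\vep)$, while the resulting density regularity allows $\ww_3$ between the smoothed laws to be controlled by the smooth-test-function bound through standard interpolation lemmas; optimizing in $\vep \sim L_3$ yields $\ww_3(\tilde X_n, \sn) \lesssim L_3$. A cleaner alternative avoiding smoothing would be a Skorokhod-type martingale embedding in Brownian motion, where cubic moment control of the embedding times would give the $\ww_3$ bound directly. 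Combining either route with the reduction estimate delivers the claimed inequality.
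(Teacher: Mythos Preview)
Your overall architecture matches the paper's: reduce to $V_n^2\equiv 1$ via the modification of Section~\ref{subsec:modify}, bound $\zz_3$ for the modified martingale by a Lindeberg swap with third-order Taylor expansion, then pass from $\zz_3$ to $\ww_3$. Two points deserve correction.

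First, the truncation step is superfluous here and, as you state it, actually costs you. The paper takes the truncation level $\alpha=\infty$ in Definition~\ref{def:trunc-mart}: there is \emph{no} level-truncation, only the stopping time $T$ and the appended Gaussian $\hat Y_{n+1}$. The reason is that the $\zz_3$ bound \eqref{eq:w3-prop} needs only third moments, not bounded increments, so truncation buys nothing. With $\alpha=\infty$, Proposition~\ref{prop:trunc}\eqref{item:prop-trunc-2} gives the modification error $\lesssim L_3+\norm{V_n^2-1}_{3/2}^{1/2}$ directly. By contrast, truncating at a finite ``Lyapunov scale'' introduces the term $n^{1/2}\alpha/\orf(\alpha s_n/2\fcon)^{1/3}$ from Proposition~\ref{prop:trunc}\eqref{item:prop-trunc-1}; for $\orf(x)=x^3$ this equals $2\fcon n^{1/2}/s_n = 2L_3 n^{1/6}$ regardless of $\alpha$, so your claimed truncation error $\lesssim L_3$ via BDG$+$Minkowski is not justified under a bare $\ms L^3$ assumption.

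Second, the passage from $\zz_3$ to $\ww_3$ is not an obstacle: the paper simply invokes Rio's inequality \eqref{eq:rio}, $\ww_r\lesssim_r\zz_r^{1/r}$, which is stated in Section~\ref{sec:special} as a black box. Your Gaussian-smoothing route is essentially a sketch of how one proves such an inequality, and the Skorokhod embedding alternative would require separate moment control of the embedding times; both are unnecessary detours. With Rio's bound in hand the proof is two lines: $\zz_3(\hat X_{n+1},\sn)\lesssim s_n^{-3}\sum_i E[|Y_i|^3]$ from \eqref{eq:w3-prop} and Remark~\ref{rmk:ww-normal-role} (the Gaussian final increment contributes nothing), hence $\ww_3(\hat X_{n+1},\sn)\lesssim L_3$, and then Proposition~\ref{prop:trunc}\eqref{item:prop-trunc-2} with $\alpha=\infty$ finishes.
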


\begin{remark}
For the ease of our presentation, we only present results in terms of the $\ww_r$ metrics, $r\in\{1,2,3\}$. Readers can use the interpolation inequality
\[
\ww_r(X_n,\sn)\le \ww_j(X_n,\sn)^{j(k-r)/(k-j)}\ww_{k}(X_n,\sn)^{k(r-j)/(k-j)}, \quad j<r<k,
\]
 to easily infer other $\ww_r$ bounds, $r\in(1,3)$.
\end{remark}

The following proposition justifies that the terms $L_\orf$ and the exponent $1/2$ of  $\norm{V_n^2-1}_{1/2}^{1/2}$ within the $\ww_1$ bound \eqref{eq:w1-lp} are optimal.
\begin{proposition}\label{prop:optimal}(Optimality of the $\ww_1$ rates).
\begin{enumerate}[(1)]
\item\label{item:prop-opt-1} 
For any $p>2$, there exists a constant $N$ depending on $p$ such that
for any $n>N$,  we can find a martingale $\{X_i\}_{i=1}^n$ which satisfies
\begin{itemize}
\item  $V_n^2=1$, a.s.;
\item $L_p\lesssim_p(\log n)^{-1}$;
\item $ \ww_1(X_n,\sn)\asymp_p L_p$.
\end{itemize}
\item\label{item:prop-opt-2}  For any $n\ge 2$, there exists a martingale $\{X_i\}_{i=1}^n$ that satisfies
\begin{itemize}
\item $\norm{V_n^2-1}_{1/2}\lesssim (\log n)^{-2}$;
\item $\ww_1(X_n,\sn)
\asymp\norm{V_n^2-1}_{1/2}^{1/2}$.
\end{itemize}
\end{enumerate}
\end{proposition}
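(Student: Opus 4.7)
\textbf{Part (2)} is the more tractable. The plan is to design a martingale whose first step reveals a discrete ``variance switch'' and whose subsequent conditional variances depend on the switch, making $V_n^2$ a nontrivial discrete random variable close to $1$. Concretely, choose $q \asymp (\log n)^{-1}$ and set $\eta \in \{-1,0,1\}$ with $P(\eta = 0) = 1 - q$, $P(\eta = \pm 1) = q/2$. Take $Y_1 = \alpha \eta$ for a small $\alpha > 0$, and $Y_i = \sigma(\eta)\xi_i/\sqrt{n-1}$ for $i \ge 2$, with $\xi_i$ i.i.d.\ $\sn(0,1)$ independent of $\eta$ and $\sigma(\eta)^2 = (1-\alpha^2 q)(1+\gamma\eta)$ for a fixed $\gamma \in (0,1)$. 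A direct computation gives $s_n^2 = 1$ and $V_n^2 - 1 = (1-\alpha^2 q)\gamma\eta$, so $\norm{V_n^2-1}_{1/2}^{1/2} \asymp q$. Conditional on $\eta$, $X_n$ is essentially $\sn(0, 1+\gamma\eta)$, making $X_n$ a three-component Gaussian mixture. Coupling $X_n$ with $G = (n-1)^{-1/2}\sum_{i\ge 2}\xi_i$ through the shared Gaussian yields $\ww_1 \lesssim q$, and the matching lower bound comes from the $1$-Lipschitz test $h(x) = |x|$: the quantity $E[|X_n|] - E[|\sn|]$ equals a nonzero multiple of $q\bigl(\tfrac{1}{2}(\sqrt{1+\gamma}+\sqrt{1-\gamma}) - 1\bigr)$, of order $q$ for $\gamma \in (0,1)$.

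\textbf{Part (1)} requires a genuinely non-i.i.d.\ construction, since Rio's inequality $\ww_1 \lesssim L_3^3$ for independent summands rules out $\ww_1 \asymp L_p$ once $p > 3$. The plan is to concentrate the non-Gaussian content into $Y_1$ and fill the remaining steps with conditionally independent Gaussians, keeping $V_n^2 = 1$ a.s.\ automatically. Specifically, take $Y_1$ with variance $\sigma_1^2$ (to be tuned) and engineered cumulants $\kappa_3(Y_1) = \cdots = \kappa_{k-1}(Y_1) = 0$, $\kappa_k(Y_1) \ne 0$, where $k$ is an integer with $k > p$; set $Y_i = \sqrt{(1-\sigma_1^2)/(n-1)}\,\xi_i$ for $i \ge 2$ with i.i.d.\ $\sn(0,1)$ variables $\xi_i$. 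For $p \in (2,3]$ one takes $k = 3$ with an asymmetric two-atom distribution for $Y_1$; for larger $p$ one uses symmetric multi-atom distributions that kill the lower even cumulants. All $\sigma_i^2(Y)$ are deterministic, so $V_n^2 = 1$ a.s., and since independent Gaussians contribute nothing to cumulants of order $\ge 3$, $\kappa_j(X_n) = \kappa_j(Y_1)$ for all $j \ge 3$. The upper bound $\ww_1(X_n, \sn) \lesssim_p L_p$ comes from Theorem~\ref{thm:ww1-new}, while for the matching lower bound the Edgeworth expansion gives
\[
F_{X_n}(x) - \Phi(x) = -\tfrac{\kappa_k(X_n)}{k!}\, H_{k-1}(x)\, \phi(x) + \text{(lower order)},
\]
where $H_{k-1}$ is the $(k-1)$-st probabilist's Hermite polynomial; integrating, $\ww_1(X_n,\sn) \asymp_k |\kappa_k(X_n)|$. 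The two free parameters $\sigma_1$ and the ``heavy atom'' probability $q$ of $Y_1$ are then tuned so that $|\kappa_k(X_n)|$ and $L_p$ are simultaneously $\asymp_p (\log n)^{-1}$, which is solvable whenever $k > p$ (so the resulting exponent of $q$ in $L_p$ is positive).

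\textbf{Main obstacle.} The crux is Part~(1): the explicit engineering of the distribution of $Y_1$ with prescribed vanishing cumulants of orders $3, \dots, k-1$ (a purely algebraic problem requiring increasingly many atoms as $p$ grows), together with a careful Edgeworth error analysis verifying that in the chosen parameter regime the $\kappa_k$-term genuinely governs $\ww_1$ (and not the subsequent cumulant $\kappa_{k+1}$ or the remainder of the expansion). Additionally, one must check that the $Y_1$-contribution to $L_p$ dominates the Gaussian-background contribution of size $n^{1/p - 1/2}$, which holds once $n$ is sufficiently large depending on $p$ given the target $L_p \asymp_p (\log n)^{-1}$.
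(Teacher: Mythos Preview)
Your Part~(2) is correct and gives a clean alternative to the paper's construction. The paper instead modifies its Part~(1) example: it keeps the oscillatory machinery (the periodic set $A=\{\cos x\ge\tfrac12\}$ and the test function $h(x)=\alpha\sin(x/\alpha)$) and makes $\sigma_{n-1}^2$ random by setting $Y_{n-1}=\alpha\eta\,\mathbbm{1}_{X_{n-2}\in\alpha A}$. Your Gaussian-mixture approach with the test function $h(x)=|x|$ is simpler and more transparent for this part; the paper's version has the advantage of reusing the Part~(1) computation almost verbatim.

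Your Part~(1), however, has a genuine gap. You correctly observe that an independent construction cannot work (Rio's bound $\ww_1\lesssim L_3^3$), yet your proposed sequence \emph{is} independent: $Y_1$ is a fixed random variable and $Y_2,\dots,Y_n$ are independent Gaussians, with all $\sigma_i^2$ deterministic. Rio's bound therefore applies. Concretely, in your tuned regime one has a ``heavy atom'' at height $a$ with probability $q$, and matching $|\kappa_k|\asymp a^kq$ with $L_p\asymp(a^pq)^{1/p}$ at level $(\log n)^{-1}$ forces $a^{k-p}\asymp(\log n)^{p-1}$, hence $a\to\infty$. But then the direct coupling $X_n=Y_1+\sqrt{1-\sigma_1^2}\,Z$ with $\sn=Z$ gives
\[
\ww_1(X_n,\sn)\le E|Y_1|+\bigl|1-\sqrt{1-\sigma_1^2}\bigr|E|Z|\lesssim aq+a^2q=\frac{a^kq}{a^{k-1}}+\frac{a^kq}{a^{k-2}}\ll(\log n)^{-1},
\]
contradicting the claimed lower bound $\ww_1\gtrsim|\kappa_k|\asymp(\log n)^{-1}$. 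The Edgeworth heuristic fails here precisely because $a\to\infty$ makes the higher cumulants $\kappa_j\asymp a^jq$ blow up with $j$, so the formal $\kappa_k$-term is not the true leading contribution.

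The paper's construction is essentially different: it builds genuine martingale dependence by letting $Y_{n-1}$ depend on $X_{n-2}$ through the event $\{X_{n-2}\in\alpha A\}$ (with $\alpha=1/\log n$), switching between a Gaussian increment and a discrete one of the \emph{same} conditional variance $\alpha^2$. This keeps $V_n^2=1$ a.s.\ while creating a distributional perturbation that the oscillatory Lipschitz test function $h(x)=\alpha\sin(x/\alpha)$ detects at scale $\alpha$. No cumulant or Edgeworth argument is used; the lower bound comes from an explicit computation of $E[h(X_n)]-E[h(\sn)]$ exploiting the Gaussian-smoothing identity $h_\alpha=e^{-1/2}h$ and the periodicity of $A$.
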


\section{Martingales with $V_n^2=1$}\label{sec:special}

In this section we will derive $\ww_r$ bounds, $r\in\{1,2,3\}$, for martingales with $V_n^2=1$. As in \cite{Rollin-18, FM-20, DMR-22,FS-22}, we will use Lindeberg's  argument and Taylor expansion to bound the Wasserstein distance with a sum involving the third order conditional moments. Our derivation of the $\ww_r$ bounds for $r>1$ also relies on 
an observation of Rio \cite{Rio-09} that relates the Wasserstein distances to  Zolotarev's ideal metric \eqref{eq:rio}. 
 
Note that
\cite{Rollin-18, FM-20, FS-22} use Stein's method to express the $\ww_1$ bound  in terms of derivatives of Stein's equation. But as observed in \cite{DMR-22},   Gaussian smoothing can already provide us enough regularity to do Taylor expansions. We follow \cite{DMR-22} in this respect.

\subsection{ Wasserstein distance bounds via Lindeberg's argument}


For any $n$-th order differentiable function $f$ and $\gamma\in(0,1]$,  denote its $(n,\gamma)$-H\"older constant by 
\[
[f]_{n,\gamma}:=\sup_{x,y:x\neq y}\frac{f^{(n)}(x)-f^{(n)}(y)}{|x-y|^\gamma}.
\]
We say that $f\in C^{n,\gamma}$ if $[f]_{n,\gamma}<\infty$.

\begin{definition}
Suppose $r>1$, and $\mu$, $\nu$ are probability measures on $\R$.
Let $\ell\in\N$ and $\gamma\in(0,1]$ be the unique numbers such that $r=\ell+\gamma$.
We let $\Lambda_r=\{f\in C^{\ell,\gamma}: [f]_{\ell,\gamma}\le 1\}$.  The {\it Zolotarev distance} $\zz_r$  is defined by 
\[
\zz_r(\mu,\nu)=\sup\left\{\int_{\R}f\dd\mu-\int_{\R}f\dd\nu: f\in\Lambda_r\right\}.
\]
\end{definition}
Recall that by the Kantorovich-Rubinstein Theorem, $\ww_r(\mu,\nu)=\zz_r(\mu,\nu)$ for $r=1$.
When $r>1$,  it is shown by Rio \cite[Theorem~3.1]{Rio-09} that 
\begin{equation}\label{eq:rio}
\ww_r(\mu,\nu)\lesssim_r\zz_r(\mu,\nu)^{1/r}.
\end{equation}

For $\sigma>0$ and any function $f\in\Lambda_r$, let 
\begin{equation}\label{eq:stein}
f_\sigma(x):=\ee{f(x+\sigma\sn)},
\end{equation}
where $\sn$ denotes a standard normal random variable. Direct integration by parts would yield the following regularity estimate for the Gaussian smoothing $f_\sigma$ 
which is a special case of \cite[Lemma 6.1]{DMR-09}.
\begin{lemma}
\label{lem:smoothing}\cite[Lemma 6.1]{DMR-09}
For $\sigma>0$ and $f\in\Lambda_r$, $r\in\N$, let $f_\sigma$ be as in \eqref{eq:stein}. Then
\[
\norm{f_\sigma^{(k)}}_\infty\le C_{k+1-r}\sigma^{r-k}, \quad\forall k\ge r,
\]
where $C_n=\int_\R \abs{u\phi^{(n)}(u)}\dd u$, and $\phi(x)=\tfrac{1}{\sqrt{2\pi}}e^{-x^2/2}$ is the standard normal density.
\end{lemma}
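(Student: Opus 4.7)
The plan is to carry $r-1$ derivatives through the integral onto $f$ directly and transfer the remaining $k-r+1$ derivatives onto the Gaussian kernel via a change of variables. Because $r\in\N$ forces $\ell=r-1$ and $\gamma=1$ in the definition of $\Lambda_r$, the function $f^{(r-1)}$ exists and is $1$-Lipschitz, which is the only regularity of $f$ I will need. Meanwhile, derivatives of $\phi$ decay super-exponentially, so moving derivatives onto the kernel is essentially free and reduces the problem to a pure moment estimate.

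Concretely, I would start from $f_\sigma(x)=\int_\R f(x+\sigma u)\phi(u)\,du$ and differentiate under the integral $r-1$ times to obtain $f_\sigma^{(r-1)}(x)=\int_\R f^{(r-1)}(x+\sigma u)\phi(u)\,du$. The substitution $y=x+\sigma u$ repackages this as
\[
f_\sigma^{(r-1)}(x)=\frac{1}{\sigma}\int_\R f^{(r-1)}(y)\,\phi\!\left(\tfrac{y-x}{\sigma}\right)dy,
\]
so that all remaining $x$-dependence sits inside the kernel. Differentiating $k-r+1$ further times, each step produces a factor $-\sigma^{-1}$ together with a derivative of $\phi$, yielding
\[
f_\sigma^{(k)}(x)=\frac{(-1)^{k-r+1}}{\sigma^{k-r+2}}\int_\R f^{(r-1)}(y)\,\phi^{(k-r+1)}\!\left(\tfrac{y-x}{\sigma}\right)dy.
\]

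The decisive step is to exploit $\int_\R \phi^{(n)}(u)\,du=0$ for every $n\ge 1$, which is immediate from $\phi^{(n-1)}$ vanishing at $\pm\infty$. Since $k\ge r$ gives $k-r+1\ge 1$, I may subtract the constant $f^{(r-1)}(x)$ from the integrand without changing the value of the integral. The Lipschitz bound $|f^{(r-1)}(y)-f^{(r-1)}(x)|\le |y-x|$ then controls the integrand, and changing variables back via $u=(y-x)/\sigma$ collapses the expression to $\sigma^{r-k}\int_\R |u\,\phi^{(k-r+1)}(u)|\,du=C_{k-r+1}\sigma^{r-k}$, which is exactly the stated bound. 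I do not expect any genuine obstacle: the only point meriting a little care is justifying the initial interchanges of differentiation and integration, which follows from dominated convergence using the at-most-linear growth of $f^{(r-1)}$ against the Gaussian weight.
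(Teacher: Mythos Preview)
Your proposal is correct and follows essentially the same approach as the paper: put $r-1$ derivatives on $f$, transfer the remaining $k-r+1$ onto the Gaussian kernel, subtract $f^{(r-1)}(x)$ using $\int_\R\phi^{(n)}=0$, and apply the $1$-Lipschitz bound on $f^{(r-1)}$. The only cosmetic difference is that the paper writes $f_\sigma(x)=\int f(x-\sigma u)\phi(u)\,du$ and moves derivatives onto $\phi$ via integration by parts, whereas you change variables to $y=x+\sigma u$ and differentiate the kernel directly; the resulting estimate is identical.
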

For the reader's convenience, we include a proof of Lemma~\ref{lem:smoothing} in the Appendix.

Recall the notations $\sigma_m(Y),X_m$ in \eqref{def:notations}.  For any random variable $U$, we write the expectation conditioning on $\ff_{i-1}$ as $E_i$,  i.e.,
\begin{equation}\label{eq:def-ei}
E_i[U]:=E[U|\ff_{i-1}].
\end{equation}
\begin{proposition}
\label{prop:v=1}
Let $Y=\{Y_m\}_{m=1}^n$, $n\ge 2$, be a martingale difference sequence and let $\tbd>0$.
Assume that $\sum_{i=1}^n\sigma_i^2(Y)=s_n^2$ almost surely.  
For any $\tbd>0$,  set
\[\lambda_m^2=\lambda_m^2(Y,\beta):=\sum_{i=m+1}^n\sigma_i^2(Y)+\tbd^2s_n^2.\]
Then, for any  $x^3\succcurlyeq\orf\in[0,\infty)^{[0,\infty)}$ such that $x\mapsto\orf(\sqrt{x})$ is convex, 
\begin{align}
\zz_r(X_n,\sn)
&\lesssim
\tbd^r+\frac{1}{s_n^r} E\sum_{i=1}^n\min\left\{\frac{E_i[|Y_i|^3]}{\lambda_i^{3-r}}, \frac{\lambda_i^rE_i\orf(|Y_i|)}{\orf(\lambda_i)}\right\},\, r\in\{1,2\},\label{eq:w12-prop}\\
\zz_3(X_n,\sn)
&\lesssim
\frac{1}{s_n^3}\sum_{i=1}^n\ee{|Y_i|^3}. \label{eq:w3-prop}
\end{align}	
\end{proposition}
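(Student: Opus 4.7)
The plan is a Lindeberg-type interpolation between $X_n$ and $\sn$. For $r\in\{1,2\}$, introduce a smoothing parameter $\tbd>0$ and auxiliary i.i.d.\ $\sn(0,1)$ variables $\tilde\xi_0,\ldots,\tilde\xi_n$ independent of $\ff_n$, and define
\[
W_i=\frac{1}{s_n}\sum_{j\le i}Y_j+\frac{1}{s_n}\sum_{j>i}\sigma_j(Y)\tilde\xi_j+\tbd\tilde\xi_0,\quad i=0,\ldots,n,
\]
so that $W_n=X_n+\tbd\tilde\xi_0$ and, since $\sum_j\sigma_j^2=s_n^2$ a.s., $W_0\sim\sn(0,1+\tbd^2)$. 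The assumption $V_n^2=1$ is the pivotal structural input: it forces $\sum_{j>i}\sigma_j^2=s_n^2-\sum_{j\le i}\sigma_j^2$ to be $\ff_{i-1}$-measurable, so conditionally on $\ff_{i-1}$ the variable $T_i:=W_i-Y_i/s_n=W_{i-1}-\sigma_i\tilde\xi_i/s_n$ is Gaussian with mean $S_{i-1}:=\frac{1}{s_n}\sum_{j<i}Y_j$ and variance $\lambda_i^2/s_n^2$, and is independent of $(Y_i,\tilde\xi_i)$. Standard Gaussian-smoothing estimates then yield $\zz_r(X_n,\sn)\lesssim\tbd^r+\sup_{f\in\Lambda_r}\sum_{i=1}^n|E[f(W_i)-f(W_{i-1})]|$.

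Integrating out the conditional Gaussian $T_i$ and using the semigroup identity $(f_a)_b=f_{\sqrt{a^2+b^2}}$ (with $\lambda_{i-1}^2=\lambda_i^2+\sigma_i^2$) rewrites each summand as
\[
E[f(W_i)-f(W_{i-1})\mid\ff_{i-1}]=E_i\bigl[f_{\lambda_i/s_n}(S_{i-1}+Y_i/s_n)-f_{\lambda_i/s_n}(S_{i-1}+\sigma_i\tilde\xi_i/s_n)\bigr].
\]
Taylor-expanding both arguments around $S_{i-1}$ to order two, the linear terms drop out via $E_iY_i=E\tilde\xi_i=0$ and the quadratic terms cancel via $E_iY_i^2=\sigma_i^2=\sigma_i^2\,E\tilde\xi_i^2$. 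What survives is the second-order Taylor remainder
\[
R(h):=f_{\lambda_i/s_n}(S_{i-1}+h)-f_{\lambda_i/s_n}(S_{i-1})-hf'_{\lambda_i/s_n}(S_{i-1})-\tfrac{h^2}{2}f''_{\lambda_i/s_n}(S_{i-1}),
\]
evaluated once at $h=Y_i/s_n$ and once at $h=\sigma_i\tilde\xi_i/s_n$.

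To bound $R$, the plan is to combine the smoothing-lemma estimates $\|f_{\lambda_i/s_n}^{(k)}\|_\infty\lesssim(\lambda_i/s_n)^{r-k}$ ($k\ge r$, Lemma~\ref{lem:smoothing}) with two Taylor estimates to obtain
\[
|R(h)|\lesssim\min\bigl\{(s_n/\lambda_i)^{3-r}|h|^3,\ (s_n/\lambda_i)^{2-r}h^2\bigr\}.
\]
The first bound, together with $\sigma_i^3\le E_i|Y_i|^3$, immediately gives the first term of the min in \eqref{eq:w12-prop}. The second term follows from the pointwise inequality
\[
\min\{y^3/\lambda^{3-r},\ y^2/\lambda^{2-r}\}\le\lambda^r\orf(y)/\orf(\lambda)\qquad(y,\lambda>0),
\]
which splits into two cases: for $y\le\lambda$ use that $\orf/x^3$ is non-increasing (from $\orf\preccurlyeq x^3$); for $y>\lambda$ use that $\orf/x^2$ is non-decreasing, which follows from the convexity of $x\mapsto\orf(\sqrt x)$. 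Jensen's inequality for $\orf(\sqrt\cdot)$ then yields $\orf(\sigma_i)\le E_i\orf(|Y_i|)$, and splitting $|\tilde\xi_i|\le 1$ versus $|\tilde\xi_i|>1$ and using the same two growth conditions gives $E\orf(\sigma_i|\tilde\xi_i|)\lesssim\orf(\sigma_i)$, which absorbs the Gaussian piece $R(\sigma_i\tilde\xi_i/s_n)$. For \eqref{eq:w3-prop}, no smoothing is needed: $f\in\Lambda_3$ has $f''\in\mathrm{Lip}_1$, so the bare Taylor remainder satisfies $|R(h)|\le|h|^3/6$ and the same telescoping with $\tbd=0$ gives $\zz_3(X_n,\sn)\lesssim\sum_iE|Y_i|^3/s_n^3$. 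The main technical obstacle I anticipate is marrying the two smoothing-based pointwise bounds on $R$ with both $\orf$-growth conditions to produce the clean $\min$ structure of \eqref{eq:w12-prop}, in a way that treats $Y_i$ and its Gaussian surrogate $\sigma_i\tilde\xi_i$ symmetrically so that the Lyapunov coefficient and the Orlicz integrability can be traded off as needed.
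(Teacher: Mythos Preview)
Your overall strategy matches the paper's: Lindeberg telescoping with Gaussian smoothing, second-order Taylor expansion of $f_{\lambda_i/s_n}$, and the two-sided remainder bound combined with the growth conditions $\orf\preccurlyeq x^3$ and $\orf\succcurlyeq x^2$ (the latter following from convexity of $\orf(\sqrt{\cdot})$). The rest of your argument, including your treatment of the Gaussian surrogate via $E[\orf(\sigma_i|\tilde\xi_i|)]\lesssim\orf(\sigma_i)$, is fine and essentially equivalent to the paper's.

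There is, however, a genuine error in your justification of the key identity. You claim that conditionally on $\ff_{i-1}$ the variable $T_i=S_{i-1}+\frac{1}{s_n}\sum_{j>i}\sigma_j\tilde\xi_j+\tbd\tilde\xi_0$ is Gaussian and independent of $(Y_i,\tilde\xi_i)$. This is false: $\sigma_{i+1}$ is $\ff_i$-measurable, not $\ff_{i-1}$-measurable, so given $\ff_{i-1}$ it still depends on $Y_i$; hence $T_i$ is neither conditionally Gaussian on $\ff_{i-1}$ nor conditionally independent of $Y_i$. The assumption $V_n^2=1$ makes the \emph{sum} $\sum_{j>i}\sigma_j^2$ (and thus $\lambda_i$) $\ff_{i-1}$-measurable, but not the individual $\sigma_j$'s. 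Your formula is nevertheless correct: condition first on $\ff_n$, where all $\sigma_j$ are deterministic and the tail noise is genuinely Gaussian with variance $\lambda_i^2/s_n^2$, to get $E[f(W_i)\mid\ff_n]=f_{\lambda_i/s_n}(S_i)$; then use that $\lambda_i$ is $\ff_{i-1}$-measurable to pull the conditioning down. The paper avoids this subtlety altogether by interpolating with a \emph{single} standard normal $\sn$ via $X_i+\lambda_i\sn$, so that the only structural input is the $\ff_{i-1}$-measurability of $\lambda_i$; this is cleaner than introducing $n+1$ separate Gaussians.
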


\begin{proof}
[Proof of Proposition~\ref{prop:v=1}:]
Without loss of generality , assume $s_n=1$.

Let $\xi, \sn$ be random variables with distributions $\sn(0,\tbd^2)$ and $\sn(0,1)$, so that the triple $(\xi, \sn, \{X_i\}_{i=1}^n)$ are independent. For $h\in\Lambda_r$, $r\in\{1,2\}$, 
and any random variable $U$ which is independent of $\xi$,
\begin{align*}
\abs{\ee{h(U+\xi)-h(U)}}
=\Abs{\ee{h(U+\xi)-\sum_{i=0}^{r-1}h^{(i)}(U)\xi^i}}
\le 
\tfrac{1}{r!}\ee{|\xi|^r}.
\end{align*}	
The triangle inequality and \eqref{eq:rio} then yields, for $r\in\{1,2\}$,
\begin{equation}\label{eq:ww-elongate-err}
\abs{\zz_r(X_n,\sn)-
\zz_r(X_n+\xi,\sn+\xi)}\lesssim\tbd^r.
\end{equation}

In what follows we will use Lindeberg's argument to derive bounds for $\zz_r(X_n+\xi,\sn+\xi)$, $r\in\{1,2,3\}$. Recall notations in \eqref{def:notations}, \eqref{eq:def-ei}.

Note that $\lambda_m$ is $\ff_{m-1}$-measurable.  Recall the function $h_{\sigma}$ in \eqref{eq:stein}. Recall the operation $E_i$ in \eqref{eq:def-ei}. 
For $h\in \Lambda_r$, we consider the following telescopic sum:
\begin{align*}
\ee{h(X_n+\xi)-h(\sn+\xi)}
&=\sum_{i=1}^n\ee{h(X_i+\lambda_i\sn)-h(X_{i-1}+\lambda_{i-1}\sn)}
=E\sum_{i=1}^n
D_i,
\end{align*}	
where $D_i:=
E_i[h_{\lambda_{i}}(X_i)-h_{\lambda_{i}}(X_{i-1}+\sigma_i\sn)]$. 

Note that
\begin{equation}\label{eq:di-bound}
D_i=
E_i
\int_{\sigma_i\sn}^{Y_i}\int_0^th''_{\lambda_i}(X_{i-1}+s)-h''_{\lambda_i}(X_{i-1})\dd s\dd t.
\end{equation}
Hence, for $h\in\Lambda_r$, $r\in\{1,2,3\}$, using the fact $\sigma_i^3\le E_i[\abs{Y_i}^3]$,  we have
\begin{equation}
\label{eq:240613-1}
|D_i|\lesssim E_i[|Y_i|^3]\norm{h_{\lambda_i}^{(3)}}_\infty
\by{Lemma~\ref{lem:smoothing}}\lesssim
 E_i[|Y_i|^3]\lambda_i^{r-3}.
\end{equation}

When $r=3$, notice that $\tbd$ is irrelevant . So, letting $\tbd\downarrow 0$  we immediately get \eqref{eq:w3-prop}. It remains to consider $r\in\{1,2\}$.

Using the fact that  for $f\in C^{1,1}$,
\begin{align*}
\Abs{\int_0^y\int_0^t f''(x+s)-f''(x)\dd s\dd t}\le 
\min\left\{\tfrac{1}{6}|y|^3\norm{f^{(3)}}_\infty,  \, y^2\norm{f''}_\infty\right\},
\end{align*}
we get,  for any event $A_i$ and any $h\in\Lambda_r$, $r\in\{1,2\}$,
\begin{align*}
\rn{I}:&=\Abs{E_i
\int_{0}^{Y_i}\int_0^th''_{\lambda_i}(X_{i-1}+s)-h''_{\lambda_i}(X_{i-1})\dd s\dd t
}\\
&\le 
E_i\left[\tfrac{1}{6}|Y_i|^3\norm{h_{\lambda_i}^{(3)}}_\infty\idkt{A_i^c}+ Y_i^2\norm{h_{\lambda_i}''}_\infty\idkt{A_i}\right]\\
&\by{Lemma~\ref{lem:smoothing}}\lesssim
E_i\left[|Y_i|^3\lambda_i^{r-3}\idkt{A_i^c}+ Y_i^2\lambda_i^{r-2}\idkt{A_i}\right],
\end{align*}
and similarly 
\begin{align*}
\rn{II}:&=\Abs{E_i
\int_{0}^{\sigma_i\sn}\int_0^th''_{\lambda_i}(X_{i-1}+s)-h''_{\lambda_i}(X_{i-1})\dd s\dd t
}
\\&
\lesssim
E_i\left[\sigma^3\lambda_i^{r-3}\idkt{A_i^c}+ \sigma_i^2\lambda_i^{r-2}\idkt{A_i}\right].
\end{align*}
Further,  since $\orf\preccurlyeq x^3$, we have $s^3/t^3\le \orf(s)/\orf(t)$ for $0<s\le t$. Hence
\[
|Y_i|^3\lambda_i^{r-3}\idkt{|Y_i|<\lambda_i}
\le 
\lambda_i^r\frac{\orf(|Y_i|)}{\orf(\lambda_i)}\idkt{|Y_i|<\lambda_i}.
\]
Since $\orf\succcurlyeq x^2$, we have $s^2/t^2\le \orf_1(s)/\orf_1(t)$ for $0<t\le s$. Thus
\[
Y_i^2\lambda_i^{r-2}\idkt{|Y_i|\ge\lambda_i}
\le 
\lambda_i^r\frac{\orf(|Y_i|)}{\orf(\lambda_i)}\idkt{|Y_i|\ge\lambda_i},
\]
and so, for $r\in\{1,2\}$,
\begin{align*}
\rn{I}\lesssim
E_i\left[|Y_i|^3\lambda_i^{r-3}\idkt{|Y_i|<\lambda_i}+Y_i^2\lambda_i^{r-2}\idkt{|Y_i|\ge\lambda_i}\right]
\lesssim\frac{\lambda_i^rE_i[\orf(|Y_i|)]}{\orf(\lambda_i)}.
\end{align*}
Similarly, for $h\in\Lambda_r$, $r\in\{1,2\}$,
\[
\rn{II}
\lesssim 
E_i\left[\sigma^3\lambda_i^{r-3}\idkt{\sigma_i<\lambda_i}+ \sigma_i^2\lambda_i^{r-2}\idkt{\sigma_i\ge\lambda_i}\right]
\lesssim
\frac{\lambda_i^r\orf(\sigma_i)}{\orf(\lambda_i)}.
\]
Since $x\mapsto\orf(\sqrt{x})$ is convex, by Jensen's inequality we know that 
\begin{equation}\label{eq:orf-y-sigma}
\orf(\sigma_i)\le E_i[\orf(|Y_i|)].
\end{equation}
 Thus, we obtain,  for $h\in\Lambda_r$, $r\in\{1,2\}$,
\[
|D_i|\by{\eqref{eq:di-bound}}\le\rn{I}+\rn{II}\lesssim \frac{\lambda_i^rE_i[\orf(|Y_i|)]}{\orf(|\lambda_i|)}.
\]
This inequality, together with \eqref{eq:240613-1}, \eqref{eq:ww-elongate-err} and \eqref{eq:rio}, 
yields the Proposition.
\end{proof}

\begin{remark}
\label{rmk:ww-normal-role}
If there exists $1\le j\le n$ such that, given $\ff_{j-1}$, the distribution of $Y_{j}$  is the normal $\sn(0,\sigma_{j}^2)$, then the $j$-th summand in \eqref{eq:w12-prop} and \eqref{eq:w3-prop} can be removed. That is, the
summations in  \eqref{eq:w12-prop} and \eqref{eq:w3-prop} can both be replaced by
\[
\sum_{i=1,i\neq j}^n.
\]
Indeed, in this case, $X_{j}$ and $X_{j-1}+\sigma_j\sn$ are identically distributed conditioning on $\ff_{j-1}$. Thus $D_j=0$.
\end{remark}

\subsection{The rates of some special cases}
From Proposition~\ref{prop:v=1} we can obtain Wasserstein distance bounds for some special cases (with $V_n^2=1$).  See Corollary~\ref{cor:v=1} below. 
These cases usually yield better rates than the typical cases.
They are not only of interest on their own right, but can also serve as
important  references when we  construct counterexamples. 

The $\ww_1$ bounds within Corollary~\ref{cor:v=1} can be considered generalizations of
some of the results in \cite[Corollaries 2.2,2.3]{Rollin-18} and \cite[Corollaries 2.5,2.6,2.7]{FS-22} to the $\ms L^\orf$ integrable cases. 

\begin{corollary}
\label{cor:v=1}
Assume that $V_n^2=1$ almost surely.  Let $\orf$ be an N-function such that $\orf\preccurlyeq x^3$ and $x\mapsto\orf(\sqrt{x})$ is convex.  
Then the following statements hold.
\begin{enumerate}[(i)]
\item\label{item:cor-v1-mmt}
$\ww_r(X_n,\sn)\lesssim L_\orf$\, for $r\in\{1,2\}$.
\item \label{item:cor-v1-var-below}
If  there exists $\sigma>0$ such that 
\[
P(\sigma_i\ge\sigma\,\text{ for all }1\le i\le n)=1,
\] 
then, writing $M_\orf=\max_{i=1}^n\ee{\orf(|Y_i|)}$ (and write $M_\orf$ as $M_3$ when $\orf=x^3$),
\begin{itemize}
\item 
$
\ww_1(X_n,\sn)\lesssim
\left\{
\begin{array}{lr}
\frac{M_3}{\sigma^2 s_n}\log n &\text{ when }\orf=x^3,\\
(3-p)^{-1}\frac{M_\orf s_n^2}{\sigma^2\orf(s_n)}&\text{ when }\orf\preccurlyeq x^p, p\in(2,3);
\end{array}
\right.
$.
\item   $\ww_2(X_n,\sn)\lesssim \frac{M_\orf^{1/2} s_n}{\sigma\orf(s_n)^{1/2}}$.
\end{itemize}

 \item\label{item:cor-v1-bdd} If there exists a constant $\theta>0$ such that  almost surely,
 \[
E[\orf(|Y_i|)|\ff_{i-1}]\le\theta\sigma_i^2 \quad\forall i=1,\ldots,n, 
 \]
 then 
 \begin{itemize}
 \item 
 $
\ww_1(X_n,\sn)\lesssim_\theta 
\left\{
\begin{array}{lr}
\frac{1}{s_n}\log(e+s_n) &\text{ when }\orf=x^3,\\
\frac{s_n^2}{\orf(s_n)} &\text{ when }\orf\preccurlyeq x^p, p\in(2,3);
\end{array}
\right.
 $
\item $\ww_2(X_n,\sn)\lesssim_\theta \frac{s_n}{\orf(s_n)^{1/2}}$. 
\end{itemize}
In particular,  when $\theta=\max_{i=1}^n\norm{Y_i}_\infty<\infty$,  we have
\[
\ww_1(X_n,\sn)\lesssim \frac{\theta}{s_n}\log(e+s_n) \, \text{ and }\, 
\ww_2(X_n,\sn)\lesssim (\tfrac{\theta}{s_n})^{1/2}.
\]
\end{enumerate}
\end{corollary}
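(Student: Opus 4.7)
The plan is to derive all three parts by applying Proposition~\ref{prop:v=1} with tailored choices of $\beta$ and leveraging the structural hypothesis in each case. The three key mechanisms are: a renormalization that forces the Orlicz norm to equal $1$ in part (i); a deterministic lower bound on $\lambda_i$ from $\sigma_i\ge\sigma$ in part (ii); and the telescoping identity $\sigma_i^2=\lambda_{i-1}^2-\lambda_i^2$ (which relies on $V_n^2=1$) in part (iii). In every case the final step is to estimate the sum inside the Proposition against a suitable integral or bound, and invoke \eqref{eq:rio} to pass from $\zz_r$ to $\ww_r$.

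For part (i) I would first replace $Y_i$ by $\tilde Y_i:=Y_i/\onorm{Y}$. Since $\onorm{\tilde Y}=1$, the definition of the Orlicz norm gives $\sum_i\ee{\orf(|\tilde Y_i|)}\le n$; moreover $\tilde X_n=X_n$ and $\tilde V_n^2=V_n^2=1$, so Proposition~\ref{prop:v=1} applied to $\tilde Y$ directly bounds $\zz_r(X_n,\sn)$. Using the second term in the min, together with $\tilde\lambda_i\ge\beta\tilde s_n$ and the fact that $x\mapsto x^r/\orf(x)$ is non-increasing (a consequence of $\orf\succcurlyeq x^2$), one obtains $\zz_r\lesssim\beta^r\bigl(1+n/\orf(\beta\tilde s_n)\bigr)$. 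Choosing $\beta\tilde s_n=\orf^{-1}(n)$, equivalently $\beta=L_\orf$, balances the two summands and yields $\ww_r\lesssim L_\orf$ via \eqref{eq:rio}.

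For part (ii), the hypothesis $\sigma_i\ge\sigma$ forces the deterministic lower bound $\lambda_i^2\ge L_i^2:=(n-i)\sigma^2+\beta^2s_n^2$. Applying Proposition~\ref{prop:v=1} with the second term in the min and the monotonicity $\lambda_i^r/\orf(\lambda_i)\le L_i^r/\orf(L_i)$, together with $\ee{\orf(|Y_i|)}\le M_\orf$, yields $\zz_r\lesssim\beta^r+M_\orf s_n^{-r}\sum_iL_i^r/\orf(L_i)$. Comparing the sum with $\int_0^n(x\sigma^2+\beta^2s_n^2)^{(r-p)/2}\dd x$ after substituting the sub-$p$ bound $\orf(L_i)\ge L_i^p\,\orf(s_n)/s_n^p$ produces a factor of order $s_n^{3-p}/((3-p)\sigma^{p-1})$ for $p<3$ and $\sigma^{-2}\log n$ at the critical value $p=3$; taking $\beta\sim 1/s_n$ makes $\beta^r$ negligible and recovers the stated rates. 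For part (iii) the hypothesis $\ee{\orf(|Y_i|)|\ff_{i-1}}\le\theta\sigma_i^2$ turns the second term in the min into $\theta\lambda_i^r\sigma_i^2/\orf(\lambda_i)$, and the telescoping identity exhibits $\sum_i\lambda_i^r\sigma_i^2/\orf(\lambda_i)$ as a Riemann sum for $2\int_{\lambda_n}^{\lambda_0}\lambda^{r+1}/\orf(\lambda)\,\dd\lambda$ (after the substitution $u=\lambda^2$). This integral is logarithmic in $1/\beta$ for $\orf=x^3$ (producing the $\log(e+s_n)$ factor upon choosing $\beta\asymp 1/s_n$) and polynomial for $\orf\preccurlyeq x^p$ with $p<3$ (producing $s_n^2/\orf(s_n)$ and $s_n/\sqrt{\orf(s_n)}$ for $r=1,2$ respectively). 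The $\ms L^\infty$ special case drops out by applying part (iii) with $\orf=x^3$ and $\theta=\max_i\norm{Y_i}_\infty$, using $\ee{|Y_i|^3|\ff_{i-1}}\le\norm{Y_i}_\infty\sigma_i^2$.

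The main obstacle is that $\lambda_i$ is random and only $\ff_{n-1}$-measurable, so one cannot simply pull factors involving $\lambda_i$ out of the conditional expectation $E_i$; part (ii) sidesteps this with a pointwise lower bound, while part (iii) exploits the telescoping identity, valid exactly because $V_n^2=1$. A secondary technicality is the Riemann-sum-to-integral comparison with $\lambda\mapsto\lambda^{r+1}/\orf(\lambda)$: this map is not globally monotone, but on the interval $[\beta s_n,s_n\sqrt{1+\beta^2}]$ the bounds $x^2\preccurlyeq\orf\preccurlyeq x^p$ make the estimates routine, at the cost of a constant depending on $p$ (or $\orf(1)$) that absorbs into $\lesssim$ or $\lesssim_\theta$.
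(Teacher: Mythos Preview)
Your overall strategy matches the paper's: each part applies Proposition~\ref{prop:v=1}, bounds $\lambda_i$ from below in a way adapted to the hypothesis, converts the resulting sum to an integral, and tunes $\beta$. Two points need correction.

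First, your stated ``main obstacle'' is not an obstacle at all. Under the standing hypothesis $V_n^2=1$ one has $\lambda_m^2=(1+\beta^2)s_n^2-\sum_{j\le m}\sigma_j^2$, and since each $\sigma_j^2$ is $\ff_{j-1}$-measurable, $\lambda_m$ \emph{is} $\ff_{m-1}$-measurable. The paper states this explicitly and uses it in the proof of Proposition~\ref{prop:v=1}. Your framing is therefore mistaken, though harmless, since the actual steps you propose are exactly what the paper does.

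Second, and more substantively, part~(iii) has a genuine gap. Your Riemann-sum identification $\sum_i\lambda_i^r\sigma_i^2/\orf(\lambda_i)$ for $2\int\lambda^{r+1}/\orf(\lambda)\,\dd\lambda$ is correct, but the inequality in the needed direction does \emph{not} follow from monotonicity: for $r\in\{1,2\}$ the map $u\mapsto u^{r/2}/\orf(\sqrt u)$ is non-increasing (from $\orf\succcurlyeq x^2$), and since you evaluate at the smaller endpoint $\lambda_i^2$ you get an \emph{upper} Riemann sum, yielding only sum $\ge$ integral. To reverse this you must first control the mesh, i.e.\ prove $\sigma_i\le\gamma$ for some $\gamma=\gamma(\theta)$. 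The paper extracts this from the hypothesis via a Chebyshev-type step: $\tfrac34\sigma_i^2\le E_i[Y_i^2\idkt{|Y_i|>\sigma_i/2}]\le E_i[\orf(|Y_i|)]/g(\sigma_i/2)\le\theta\sigma_i^2/g(\sigma_i/2)$ with $g(x)=\orf(x)/x^2$ increasing, whence $\sigma_i\le2g^{-1}(4\theta/3)$. Only with this bound can one shift the integration domain (or, as the paper does, use a stopping-time representation with $\lambda_{\tau_t}^2\ge s_n^2(1+\beta^2)-t-\gamma^2$) to obtain the correct inequality; the $\gamma$-dependence then absorbs into $\lesssim_\theta$. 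Without it a single large $\sigma_j$ collapses the argument to the trivial part-(i) bound.

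A smaller remark on part~(ii): the paper does not take $\beta\sim 1/s_n$ but rather balances the two terms (e.g.\ $\beta=2M_3/(\sigma^2 s_n)$ when $\orf=x^3$), and must separately verify the constraint $\beta\ge2\sigma/s_n$ needed for the sum-to-integral comparison. Your choice would require checking that $s_n^{-r}$ is dominated by the main term, which again reduces to a lower bound $M_\orf\gtrsim\orf(\sigma)$ of the same flavor.
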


\begin{proof}
{\bf \eqref{item:cor-v1-mmt}}
 For the ease of notation we write $\theta:=\onorm{Y}$. 
 Note that $x\mapsto\orf(x/\theta)$ is still an N-function that satisfies conditions of Proposition~\ref{prop:v=1}. Recall the conditional expectation $E_i$ in \eqref{eq:def-ei}. 
By Proposition~\ref{prop:v=1},
\begin{align*}
\zz_r(X_n,\sn)
&\lesssim
\tbd^r+\frac{1}{s_n^r}E\sum_{i=1}^n\frac{\lambda_i^r E_i[\orf(|Y_i|/\theta)]}{\orf(\lambda_i/\theta)}\\
&\lesssim 
\tbd^r+\frac{1}{s_n^r}E\sum_{i=1}^n\frac{(\tbd s_n)^r E_i[\orf(|Y_i|/\theta)]}{\orf(\tbd s_n/\theta)}\\
&\lesssim 
\tbd^r+\frac{n\tbd^r}{\orf(\tbd s_n/\theta)} \quad \text{ for any }\tbd>0, r\in\{1,2\},
\end{align*}
where in the second inequality we used the fact that $\lambda_i\ge \tbd s_n$ for all $i$ and that $x\mapsto\frac{x^r}{\orf(x)}$ is decreasing. Taking $\tbd=\frac{\theta}{s_n}\orf^{-1}(n)$,
the  $\zz_1$, $\zz_2$ bounds are proved.

%
\medskip

\eqref{item:cor-v1-var-below} Recall $\lambda_i$ in Proposition~\ref{prop:v=1}.   When $\sigma_i\ge\sigma$ for all $i$ we have
\[
\lambda_i^2\ge (n-i)\sigma^2+\tbd^2s_n^2,  \quad\forall 1\le i\le n.
\]
Thus,  by Proposition~\ref{prop:v=1},  we get, for any $\tbd\ge 2\sigma/s_n$, $x^r\preccurlyeq \orf\preccurlyeq x^3$, $r\in\{1,2\}$,
\begin{align}\label{eq:0614-1}
\zz_r(X_n,\sn)
&\lesssim 
\tbd^r+\frac{M_\orf}{s_n^r}\sum_{i=1}^n\tfrac{[(n-i)\sigma^2+\tbd^2s_n^2]^{r/2}}{\orf(\sqrt{(n-i)\sigma^2+\tbd^2s_n^2})}\nn\\
&\lesssim 
\tbd^r+\frac{M_\orf}{s_n^r\sigma^2}
\int_{\sigma}^{\sqrt{n}\sigma+\tbd s_n}\frac{t^{r+1}}{\orf(t)}\dd t.
\end{align}
The first $\zz_1$ bound follows when $r=1$, $\orf(x)=x^3$,  $\tbd=2M_3/(s_n\sigma^2)$. 

Now consider the case $x^r\preccurlyeq \orf\preccurlyeq x^p$, $p\in(2,r+2)\cap(2,3]$, $r\in\{1,2\}$.  Since  $s_n^2\ge n\sigma$ and $x\mapsto x^p/\orf(x)$ is increasing, we get, for any $\tbd\in[\tfrac{2\sigma}{s_n},1]$,
\begin{align*}
\int_{\sigma}^{\sqrt{n}\sigma+\tbd s_n}\frac{t^p}{\orf(t)}t^{r+1-p}\dd t
&\le \frac{(2s_n)^p}{\orf(2s_n)}\int_{0}^{2s_n}t^{r+1-p}\dd t\\
&\le 
\frac{1}{r+2-p}\frac{(2s_n)^{r+2}}{\orf(2s_n)}\\
&\le 
\frac{1}{r+2-p}\frac{2^rs_n^{r+2}}{\orf(s_n)}.
\end{align*}
Taking $\tbd$ such that $\tbd^r=C\frac{M_\orf}{\sigma^2}\frac{s_n^2}{\orf(s_n)}$ for appropriate constant $C>0$ and recalling \eqref{eq:0614-1}, 
we get (the case $x^r\preccurlyeq \orf\preccurlyeq x^p$, $p\in(2,r+2)\cap(2,3]$)
\[
\zz_r(X_n,\sn)\lesssim\frac{1}{r+2-p}\tbd^r.
\]
the rest of the $\ww_r$ bounds in \eqref{item:cor-v1-var-below} follow.  Note that this is a trivial inequality (since $\ww_r(X_n,\sn)\lesssim 1$) if $\tbd>1$. If $\tbd<1$, it remains to justify that such a choice of $\tbd$ satisfies $\tbd\ge 2\sigma/s_n$. Indeed, 
since $\orf\succcurlyeq x^2$ and $\sigma_i\ge\sigma$, we have, for $1\le i\le n$,
\[
M_\orf\ge \frac{\orf(\sigma/2)}{(\sigma/2)^2}\ee{Y_i^2\idkt{|Y_i|\ge\sigma/2}}
\gtrsim \orf(\sigma/2)
\]
and so, for $r\in\{1,2\}$,
\[
\tbd s_n\ge \tbd^r s_n
\gtrsim 
\frac{\orf(\sigma/2) s_n^3}{(\sigma/2)^2\orf(s_n)}
\gtrsim \sigma,
\]
where in the last inequality we used the fact that $\orf\lesssim x^3$.
%
%

\medskip

\eqref{item:cor-v1-bdd} First, notice that there exists $\gamma>0$ such that $\sigma_i<\gamma$, $\forall 1\le i\le n$. Indeed, 
\begin{align*}
\frac{3}{4}\sigma_i^2
&\le E_i[Y_i^2\idkt{|Y_i|>\sigma_i/2}]
\le 
E_i[\orf(|Y_i|)]/g(\tfrac{\sigma_i}{2})
\le \theta\sigma_i^2/ g(\tfrac{\sigma_i}{2})
\end{align*}
where $g(x):=\orf(x)/x^2$ is an increasing function on $(0,\infty)$.  Thus 
\[
\sigma_i\le 2g^{-1}(\tfrac{4\theta}{3})=:\gamma.
\]

Next, by Proposition~\ref{prop:v=1} and $E_i[\orf(|Y_i|)]\le\theta\sigma_i^2$,  we get
\[
\zz_r(X_n,\sn)
\lesssim 
\tbd^r+\frac{\theta}{s_n^r}E\sum_{i=1}^n\frac{\lambda_i^r\sigma_i^2}{\orf(\lambda_i)}.
\]
 Define a sequence of stopping times $\{\tau_t\}_{0\le t\le1}$ by
\begin{equation}\label{def:stoppingt}
\tau(t)=\tau_t:=\inf\{m\ge 1: \sum_{i=1}^m\sigma_i^2\ge t\}.
\end{equation}

Clearly, $\tau_t=m$ if and only if $t\in(\sum_{i=1}^{m-1}\sigma_i^2,\sum_{i=1}^m\sigma_i^2]$.  
Moreover,
\[
\lambda_{\tau_t}^2=s_n^2(1+\tbd^2)-\sum_{i=1}^{\tau_t}\sigma_i^2\ge s_n^2(1+\tbd^2)-t-\sigma_{\tau_t}^2\ge  s_n^2(1+\tbd^2)-t-\gamma^2.
\] Thus we get,  for any $\tbd\in[\frac{\gamma}{s_n},1]$, $r\in\{1,2\}$,
\begin{align*}
\sum_{i=1}^n\frac{\lambda_i^r\sigma_i^2}{\orf(\lambda_i)}
=\int_0^{s_n^2}\frac{\lambda_{\tau_t}^r}{\orf(\lambda_{\tau_t})}\dd t\le 
\int_{s_n^2\tbd^2-\gamma^2}^{(1+\tbd^2)s_n^2}\frac{t^{r/2}}{\orf(t^{1/2})}\dd t
\le 
2\int_{\sqrt{s_n^2\tbd^2-\gamma^2}}^{\sqrt 2s_n}\frac{t^{r+1}}{\orf(t)}\dd t,
\end{align*}
where in the first inequality we used the fact that $x\mapsto x/\orf(x)$ is decreasing.
When $\orf=x^3$, taking $\tbd=2\gamma/s_n$,  the desired $\ww_1$ bound follows.
For the case $\orf\preccurlyeq x^p$, $2<p<3$, the bound of this integral can be handled the same way as in \eqref{item:cor-v1-var-below}.  Then the $\ww_1$ bounds in \eqref{item:cor-v1-bdd} are proved.

It remains to prove the $\zz_2$ bound. By the inequality above,  for any $\tbd\in[\frac{\gamma}{s_n},1]$, 
\begin{align*}
\zz_2(X_n,\sn)
\lesssim
\tbd^2+\frac{\theta}{s_n^2}\int_{0}^{\sqrt 2s_n}\frac{t^{3}}{\orf(t)}\dd t
\lesssim
\tbd^2+\frac{\theta s_n^2}{\orf(s_n)},
\end{align*}
where in the last inequality we  used the fact that $x\mapsto x^3/\orf$ is increasing.
Taking $\tbd^2=\frac{s_n^2\orf(\gamma)}{\orf(s_n)\gamma^2}$,  the $\zz_2$ bound in \eqref{item:cor-v1-bdd} follows.
\end{proof}

\section{Proofs of the main $\ww_r$ bounds}\label{sec:pf-main}
This section is devoted to the  proofs of Theorems~\ref{thm:ww1-new}, \ref{thm:ww2}, and \ref{thm:ww3}.


Note that the Taylor expansion results (Proposition~\ref{prop:v=1}), which are in terms of the second and third conditional moments, suit exactly martingales  (with $V_n^2=1$) with integrability between $\ms L^2$ and $\ms L^3$, and so not surprisingly,  the $\ww_1,\ww_2$ bounds of  Corollary~\ref{cor:v=1} for $\ms L^p$ martingales,  $p\in(2,3]\cup\{\infty\}$,  followed quite easily from Proposition~\ref{prop:v=1}. 
However, to obtain Wasserstein bounds for martingales with more general integrability, we need new insights.

In Subsection~\ref{subsec:modify}, we will modify the mds into a bounded sequence with deterministic total conditional variance $V_n^2$. To this end, we first truncate the martingale into a bounded sequence with $V_n^2\le 1$, and then lengthen it to have $V_n^2=1$. Such tricks of elongation and truncation of martingales were already used by Bolthausen \cite{Bol-82} (the idea goes back to Dvoretzky \cite{Dvo-72}) and Haeusler \cite{Haeusler-88} in the study of the Kolmogorov distance of the martingale CLT. 
The main result of  Subsection~\ref{subsec:modify} is a control of the error due to this modification in terms of $L_\orf$ and $(V_n^2-1)$. See Proposition~\ref{prop:trunc}. As an application, we prove the $\ww_3$ bound in Theorem~\ref{thm:ww3}.

In Subsections~\ref{subsec:pf-w1} and \ref{subsec:pf-w2}, we prove the $\ww_1,\ww_2$ estimates in Theorems~\ref{thm:ww1-new} and \ref{thm:ww2} by bounding the corresponding metrics for the modified martingale. A crucial step of our method is to use Young's inequality {\it inside} the conditional expectations  to ``decouple" the Lyapunov coefficient $L_\orf$ and the conditional variances $\sigma_i^2$ from the summation within  Proposition~\ref{prop:v=1}. This will turn the $\ww_r$ bounds into an optimization problem over three parameters: the truncation parameter ($\alpha$), the smoothing parameter ($\tbd$), and an Orlicz parameter ($\scon$). To this end, tools from the theory of N-functions will be employed to compare N-functions to polynomials.

 \subsection{A modified martingale, and Proof of Theorem~\ref{thm:ww3} ($\ww_3$ bounds)}\label{subsec:modify}

The goal of this subsection is to modify the original mds $\{Y_i\}_{i=1}^n$ to a new mds $\hat{Y}=\{\hat Y_i\}_{i=1}^{n+1}$ which is uniformly bounded  except the last term. Throughout this subsection, we let $\alpha\in(0,\infty]$ be any fixed constant. 
 
 Define a martingale difference sequence $Z=Z^{(\alpha)}=\{Z_i\}_{i=1}^n$ as
\begin{equation}\label{eq:def-yhat-1}
Z_i=Z_i^{(\alpha)}:= Y_i\mathbbm{1}_{|Y_i|\le\alpha/2}-E[Y_i\mathbbm{1}_{|Y_i|\le\alpha/2}|\ff_{i-1}], \quad 1\le i\le n.
\end{equation}
Note that $P(|Z_i|\le\alpha)=1$ for all $1\le i\le n$ and, with $\sigma_i^2(Z):=\ee{Z_i^2|\ff_{i-1}}$,
\begin{align}\label{eq:var-hat}
\sum_{i=1}^n\sigma_i^2(Z)
&\le \sum_{i=1}^n\ee{Y_i^2\idkt{|Y_i|\le\alpha/2}}
\le\sum_{i=1}^n\sigma_i^2(Y).
\end{align}	
Define a stopping time (with the convention $\inf\emptyset=\infty$)
\begin{equation}\label{eq:def-st}
T=\inf\big\{1\le m\le n:\sum_{i=1}^m\sigma_i^2(Z)>s_n^2\big\}\wedge(n+1).
\end{equation}
Since $\sum_{i=1}^m\sigma_i^2(Z)$ is $\ff_{m-1}$-measurable, 
 $(T-1)$ is also a stopping time.
 
\begin{definition}\label{def:trunc-mart}
Let $\alpha\in(0,\infty]$ and let $\xi\sim\sn(0,1)$ be a standard normal  which is independent of $\{Y_i\}_{i=1}^n$.
Define the modified martingale difference sequence  $\hat{Y}=\hat{Y}^{(\alpha)}=\{\hat Y_i\}_{i=1}^\infty$ as follows: $\forall i\ge 1$,
\begin{equation}\label{eq:def-yhat-2}
\hat{Y}_i=\hat{Y}_i^{(\alpha)}:=
Z_i\idkt{\sum_{j=1}^i\sigma_j^2(Z)\le 1}
+\xi\left(s_n^2-\sum_{j=1}^{T-1}\sigma_j^2(Z)\right)^{1/2}\idkt{i=n+1}.
\end{equation}
Enlarging the $\sigma$-fields $\{\ff_i\}$ if necessary,  $\{\hat{Y}_m\}_{m\ge 0}$ is still a mds. Let
\[
\hat{X}_m=\frac{1}{s_n}\sum_{i=1}^m\hat{Y}_i, \quad m\ge 1, \text{ with }\hat{X}_0=0.
\] 
\end{definition}

Note that  $\hat{Y}_j=0$ for all $j\in[T,n]\cup(n+1,\infty)$ and recall that $T\le n+1$.  We write, for $m\ge 1$,  $\sigma_m^2(\hat{Y}):=\ee{\hat{Y}_m^2|\ff_{m-1}}$.  

The main feature of the mds $\hat{Y}$ is that $|\hat{Y}_i|\le\alpha$ a.s. for $1\le i\le n$, and 
\begin{equation}\label{eq:var-xhat}
\sum_{i=1}^{n+1}\sigma_i^2(\hat{Y})=s_n^2 \quad\text{ almost surely.}
\end{equation}

\begin{proposition}
\label{prop:trunc}
For $\alpha\in(0,\infty]$ and a mds $\{Y_i\}_{i=1}^n$, 
let $\{\hat{Y}_i\}$ and $Z=\{Z_i\}_{i=1}^n$ be as in Definition~\ref{def:trunc-mart}.  
Let $r\ge 1$ and let  $\orf\in[0,\infty)^{[0,\infty)}$ be an N-function with $\orf\succcurlyeq x^{r\vee 2}$. Recall the definition of $L_\orf=L_\orf(Y)$ in \eqref{def:L-orf}. 
Set $\fcon:=\onorm{Y}$.
\begin{enumerate}[(a)]
\item\label{item:prop-trunc-1} For $\alpha<\infty$, $r\ge 1$,
\begin{align*}
\abs{\ww_r(X_n,\sn)-\ww_r(\hat{X}_{n+1},\sn)}
\lesssim_r
L_\orf+\alpha
+\frac{n^{1/2}\alpha}{\orf(\alpha s_n/2\fcon)^{1/(r\vee 2)}}+\norm{V_n^2-1}_{r/2}^{1/2}.
\end{align*}
\item\label{item:prop-trunc-2} 
If $\orf$ satisfies that $x\mapsto\orf(\sqrt{x})$ is convex, 
then for $r\ge 1$,
\begin{align*}
\abs{\ww_r(X_n,\sn)-\ww_r(\hat{X}_{n+1},\sn)}
\lesssim_r 
L_\orf
+\frac{n^{1/2}\alpha}{\orf(\alpha s_n/2\fcon)^{1/(r\vee 2)}}\idkt{\alpha<\infty}+\norm{V_n^2-1}_{r/2}^{1/2}.
\end{align*}
\end{enumerate}
\end{proposition}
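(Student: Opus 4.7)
The plan is to use that $X_n$ and $\hat X_{n+1}$ live on a common probability space, so by the triangle inequality for $\ww_r$ combined with the trivial coupling,
\[
|\ww_r(X_n,\sn) - \ww_r(\hat X_{n+1},\sn)| \le \ww_r(X_n, \hat X_{n+1}) \le \|X_n - \hat X_{n+1}\|_r,
\]
and then to bound the right-hand side. Using $E_i[Y_i]=0$ gives $Y_i - Z_i = Y_i\idkt{|Y_i|>\alpha/2} - E_i[Y_i\idkt{|Y_i|>\alpha/2}]$, and together with the definition of $\hat Y_i$ and the stopping time $T$ this yields the three-term split
\[
s_n(X_n - \hat X_{n+1}) = \underbrace{\sum_{i=1}^n(Y_i - Z_i)}_{=:A} + \underbrace{\sum_{i=T}^n Z_i}_{=:B} - \hat Y_{n+1},
\]
and I will estimate each piece separately in $\|\cdot\|_r$.

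For $A$: the sequence $(Y_i-Z_i)_i$ is itself a mds, so Burkholder--Davis--Gundy reduces $\|A\|_{r\vee 2}$ to $\|(\sum_i E_i[(Y_i-Z_i)^2])^{1/2}\|_{r\vee 2}$. Each conditional second moment is dominated by $E_i[Y_i^2\idkt{|Y_i|>\alpha/2}]$, and under $\orf\succcurlyeq x^{r\vee 2}$ one has the pointwise Orlicz comparison
\[
|Y_i|^{r\vee 2}\idkt{|Y_i|>\alpha/2}\le \frac{(\alpha/2)^{r\vee 2}}{\orf(\alpha/(2\fcon))}\,\orf(|Y_i|/\fcon).
\]
Summing via $\sum_i E[\orf(|Y_i|/\fcon)]\le n$ together with $L_\orf = \fcon\orf^{-1}(n)/s_n$ and the Orlicz duality \eqref{eq:orlicz-ineq} produces the $L_\orf$ contribution. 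In case (b) the convexity of $x\mapsto\orf(\sqrt{x})$ permits Jensen's inequality $\orf(\sigma_j)\le E_j[\orf(|Y_j|)]$, which sharpens this and removes the additional $\alpha$ that case (a) must carry.

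For $B$ and $\hat Y_{n+1}$: on $\{T\le n\}$, the stopping rule yields $\sum_{i=1}^{T-1}\sigma_i^2(Z)\le s_n^2<\sum_{i=1}^T\sigma_i^2(Z)$, and $|Z_T|\le\alpha$ forces $\sigma_T^2(Z)\le\alpha^2$; since $\sum_i\sigma_i^2(Z)\le s_n^2V_n^2$, this gives $\sum_{i=T}^n\sigma_i^2(Z)\le\alpha^2+s_n^2(V_n^2-1)_+$, and Burkholder yields $\|B\|_r/s_n\lesssim_r\alpha/s_n+\|V_n^2-1\|_{r/2}^{1/2}$. For $\hat Y_{n+1}=\xi(s_n^2-\sum_{j=1}^{T-1}\sigma_j^2(Z))^{1/2}$, the independence of $\xi$ gives $\|\hat Y_{n+1}\|_r\lesssim_r\|s_n^2-\sum_{j=1}^{T-1}\sigma_j^2(Z)\|_{r/2}^{1/2}$; expanding
\[
s_n^2 - \sum_{j=1}^{T-1}\sigma_j^2(Z) = s_n^2(1-V_n^2) + \sum_{j=1}^n(\sigma_j^2(Y)-\sigma_j^2(Z)) + \sum_{j=T}^n\sigma_j^2(Z),
\]
the first piece contributes $\|V_n^2-1\|_{r/2}^{1/2}$, while the truncation-variance loss $\sum_j(\sigma_j^2(Y)-\sigma_j^2(Z))\le 2\sum_j E_j[Y_j^2\idkt{|Y_j|>\alpha/2}]$ is handled by the Orlicz comparison above, evaluated at the amplified scale $\alpha s_n/(2\fcon)$ via the monotonicity of $x^{r\vee 2}/\orf(x)$; this produces the middle term $n^{1/2}\alpha/\orf(\alpha s_n/(2\fcon))^{1/(r\vee 2)}$.

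The delicate step is obtaining the scale $\alpha s_n/(2\fcon)$ inside $\orf$ in the middle term rather than the naive $\alpha/(2\fcon)$: this requires a two-level decomposition of the event $\{|Y_j|>\alpha/2\}$ (splitting at the threshold $\alpha s_n/2$) combined with the Orlicz monotonicity, so that the heavy tail is priced at the shifted scale. Managing this alongside the dichotomy (a) vs.\ (b) --- where the convexity of $\orf(\sqrt{\cdot})$ lets case (b) bypass the additive $\alpha$ by Jensen --- is the technical heart of the proof.
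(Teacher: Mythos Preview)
Your high-level approach matches the paper's exactly: the triangle inequality plus the coupling bound $|\ww_r(X_n,\sn)-\ww_r(\hat X_{n+1},\sn)|\le\norm{X_n-\hat X_{n+1}}_r$, the three-term split $A+B-\hat Y_{n+1}$, and Burkholder together with Orlicz comparison. However, your bookkeeping of which piece contributes which term in the final bound is scrambled in ways that would not survive a write-up.

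First, the $A$ term does \emph{not} produce $L_\orf$. Following your own Burkholder step and Orlicz comparison gives precisely the middle term $n^{1/2}\alpha/\orf(\alpha/(2\fcon))^{1/(r\vee 2)}$, and no appeal to the duality relation \eqref{eq:orlicz-ineq} can simplify this to $L_\orf$ for a free parameter $\alpha$. In the paper the $L_\orf$ contribution comes instead from $\norm{\max_i|Z_i|}_r\lesssim\onorm{Z}\,\orf^{-1}(n)\lesssim L_\orf$ (Lemmas~\ref{alem:square-by-orlicz} and~\ref{lem:orfnorm-z}); this max term is what the Burkholder inequality (Theorem~\ref{athm:hall-heyde}) produces when applied to $B$, not to $A$.

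Second, the convexity of $x\mapsto\orf(\sqrt x)$ plays no role in the $A$ term. The only place cases (a) and (b) diverge is in bounding $\norm{\max_i\sigma_i(Z)}_r$, which enters through the $\hat Y_{n+1}$ analysis: on $\{T\le n\}$ one has $s_n^2-\sum_{j<T}\sigma_j^2(Z)\le\max_i\sigma_i^2(Z)$. This max is trivially $\le\alpha$, giving (a); under the convexity hypothesis Jensen yields $\orf(\sigma_i(Z))\le E_i[\orf(|Z_i|)]$, hence $\onorm{\{\sigma_i(Z)\}}\le\onorm{Z}$ and the same max lemma gives $\norm{\max_i\sigma_i(Z)}_r\lesssim L_\orf$, which is (b).

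Third, there is no ``delicate step'' requiring a two-level threshold decomposition. The paper simply assumes $s_n=1$ without loss of generality, after which the argument of $\orf$ is just $\alpha/(2\fcon)$; the $s_n$ in the statement is entirely absorbed by this normalization. You have invented a difficulty that does not exist.
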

Estimate \eqref{item:prop-trunc-2} is slightly better than \eqref{item:prop-trunc-1} but with a slightly stronger condition.  Statement \eqref{item:prop-trunc-2} is more useful for the case $\alpha=\infty$.

\begin{lemma}
\label{lem:orfnorm-z}
Let $Z=\{Z_i\}_{i=1}^n$ be as in \eqref{eq:def-yhat-1}.
Recall the definition of the orlicz norm $\onorm{\cdot}$ for a sequence in Definition~\ref{def:orlicz-norm}.
 Then, for any N-function $\orf$, 
\[
\onorm{Z}\le 2\onorm{Y}
\]
\end{lemma}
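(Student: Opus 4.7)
The plan is to show that whenever $c > \onorm{Y}$ one has $\tfrac{1}{n}\sum_{i=1}^n E[\orf(|Z_i|/(2c))] \le 1$; by the definition of the Orlicz norm this immediately gives $\onorm{Z} \le 2c$, and then passing $c \downarrow \onorm{Y}$ yields the lemma. (The edge cases $\onorm{Y}=0$ forces $Y_i = 0$ a.s. and hence $Z_i = 0$ a.s., while $\onorm{Y}=\infty$ makes the claim vacuous.)

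The first step is a pointwise bound on $|Z_i|$. By the triangle inequality,
\[
|Z_i| \;\le\; |Y_i\mathbbm{1}_{|Y_i|\le\alpha/2}| + \bigl|E[Y_i\mathbbm{1}_{|Y_i|\le\alpha/2}\mid\ff_{i-1}]\bigr| \;\le\; |Y_i| + E[|Y_i|\mid\ff_{i-1}],
\]
so $|Z_i|/(2c) \le \tfrac{1}{2}\bigl(|Y_i|/c\bigr) + \tfrac{1}{2}\bigl(E[|Y_i|\mid\ff_{i-1}]/c\bigr)$.

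The second step uses the two convexity properties of N-functions. Since $\orf$ is convex,
\[
\orf\!\left(\frac{|Z_i|}{2c}\right) \;\le\; \tfrac{1}{2}\,\orf\!\left(\frac{|Y_i|}{c}\right) + \tfrac{1}{2}\,\orf\!\left(\frac{E[|Y_i|\mid\ff_{i-1}]}{c}\right).
\]
Taking (unconditional) expectation and then applying Jensen's inequality for conditional expectations to the second term,
\[
E\!\left[\orf\!\left(\frac{E[|Y_i|\mid\ff_{i-1}]}{c}\right)\right] \;\le\; E\!\left[E\!\left[\orf\!\left(\frac{|Y_i|}{c}\right)\,\Big|\,\ff_{i-1}\right]\right] = E\!\left[\orf\!\left(\frac{|Y_i|}{c}\right)\right].
\]
Combining the two displays gives $E[\orf(|Z_i|/(2c))] \le E[\orf(|Y_i|/c)]$ for each $i$.

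Summing over $i$ and dividing by $n$,
\[
\frac{1}{n}\sum_{i=1}^n E\!\left[\orf\!\left(\frac{|Z_i|}{2c}\right)\right] \;\le\; \frac{1}{n}\sum_{i=1}^n E\!\left[\orf\!\left(\frac{|Y_i|}{c}\right)\right] \;\le\; 1,
\]
where the last inequality holds for every $c > \onorm{Y}$. This completes the argument. There is no real obstacle here; the only point to be careful about is that both uses of convexity (the triangle-inequality step via $\orf((a+b)/2) \le \tfrac12\orf(a)+\tfrac12\orf(b)$, and the conditional Jensen step) must be available, and both follow from the definition of an N-function.
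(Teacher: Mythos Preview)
Your proof is correct and follows essentially the same approach as the paper's: bound $|Z_i|$ via the triangle inequality, apply convexity of $\orf$ to split the average, and use conditional Jensen on the term involving $E[\,\cdot\,|\ff_{i-1}]$. The only cosmetic differences are that the paper normalizes $\onorm{Y}=1$ (rather than taking $c>\onorm{Y}$ and letting $c\downarrow\onorm{Y}$) and retains the truncation indicator $\mathbbm{1}_{|Y_i|\le\alpha/2}$ throughout rather than dominating by $|Y_i|$; neither affects the argument.
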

\begin{proof}
Without loss of generality, assume $\onorm{Y}=1$.
It suffices to show that
\begin{equation}\label{eq:onorm-hatphi-phi}
E\sum_{i=1}^n\orf(|Z_i|/2)\le n.
\end{equation}
Indeed, recall  $E_i[\cdot]$ in \eqref{eq:def-ei}.  By the definition of $Z_i$ in \eqref{eq:def-yhat-1}, for $1\le i\le n$,
\begin{align*}
E_i[\orf(|Z_i|/2)]
&\le E_i\left[\orf\left(
(|Y_i|\idkt{|Y_i|\le\alpha/2}+E_i[|Y_i|\idkt{|Y_i|\le\alpha/2}])/2
\right)\right]\\
&\le 
\frac{1}{2}E_i\left[\orf(|Y_i|\idkt{|Y_i|\le\alpha/2})+\orf(E_i[|Y_i|\idkt{|Y_i|\le\alpha/2}])\right]\\
&\le 
E_i\left[\orf(|Y_i|\idkt{|Y_i|\le\alpha/2})\right],
\end{align*}
where we used Jensen's inequality in both the second and the third inequalities.
Summing both sides over all $1\le i\le n$,
inequality \eqref{eq:onorm-hatphi-phi} follows.
\end{proof}

\begin{proof}[Proof of Proposition~\ref{prop:trunc}]
Throughout, notice that $r\ge 1$ and $\orf\succcurlyeq x^{r\vee 2}$.

Without loss of generality, assume $s_n=1$. Set
\begin{align}
K_r:=\norm{V_n^2-1}_{r/2}^{1/2}. \label{eq:def-K}
\end{align}	

To prove Proposition~\ref{prop:trunc},
it suffices to show that
\begin{equation}\label{eq:l2-bd-diff-tr-elon}
\norm{X_n-\hat{X}_{n+1}}_r
\lesssim_r
L_\orf+\norm{\max_i^n\sigma_i(Z)}_r+K_r
+\frac{n^{1/2}\alpha}{\orf(\alpha/2\fcon)^{1/(r\vee 2)}}\idkt{\alpha<\infty}.
\end{equation}
Indeed, trivially we have $\norm{\max_i^n\sigma_i(Z)}_r\le\alpha$. If $\orf(\sqrt{x})$ is convex, we claim that
\begin{equation}\label{eq:ifconvex}
\norm{\max_i^n\sigma_i(Z)}_r\lesssim L_\orf.
\end{equation}
To this end, we apply Lemma~\ref{alem:square-by-orlicz} to get
\[
\norm{\max_i^n\sigma_i(Z)}_r
\le 2^{1/r}\onorm{\{\sigma_i(Z)\}_{i=1}^n}\orf^{-1}(n)
\]
Notice that, when $\orf(\sqrt{x})$ is convex,  then using Jensen's inequality as in \eqref{eq:orf-y-sigma}, we have
$\onorm{\{\sigma_i(Z)\}_{i=1}^n}\le\onorm{Z}$
which is bounded by $2\onorm{Y}$ by Lemma~\ref{lem:orfnorm-z}.  Claim \eqref{eq:ifconvex} follows.


The rest of the proof is devoted to obtain \eqref{eq:l2-bd-diff-tr-elon}.

Recall $T$ in \eqref{eq:def-st}. 
Since $\{\hat{Y}_i\}$ and $\{Z_i\}$ coincide up to $i=T-1$, we have
\begin{align}\label{eq:decompose-diff}
X_n-\hat{X}_{n+1}
&=\sum_{i=T}^{n}Z_i+\sum_{i=1}^n(Y_i-Z_i)-\hat{Y}_{n+1}.
\end{align}	

{\bf Step 1.} 
We will bound the first term $\sum_{i=T}^{n}Z_i$ in \eqref{eq:decompose-diff} by
\begin{equation}
\label{eq:newmg-term1}
\norm{\sum_{i=T}^n Z_i}_r
\lesssim_r
K_r+L_\orf, \quad r\ge 1.
\end{equation}
Since $T-1$ is a stopping time, $\{Z_i\}_{i=T}^n$ form a martingale difference sequence. Hence, by 
an inequality of Burkholder Theorem~\ref{athm:hall-heyde}
 and Lemma~\ref{alem:square-by-orlicz}, for $r\ge 1$,
\begin{align}\label{eq:norm1-z}
\norm{\sum_{i=T}^n Z_i}_r
\le 
\norm{\sum_{i=T+1}^n Z_i}_r+\norm{\max_{i=1}^n|Z_i|}_{r}
\lesssim_r
\norm{\sum_{i=T+1}^n\sigma_i^2(Z)}_{r/2}^{1/2}+L_\orf.
\end{align}
Further, by the definition of the stopping time $T$,
\begin{align*}
\sum_{i=T+1}^n\sigma_i^2(Z)
=
\left(\sum_{i=1}^n\sigma_i^2(Z)-\sum_{i=1}^T\sigma_i^2(Z)\right)\vee 0
\le 
\left(
\sum_{i=1}^n\sigma_i^2(Z)-1
\right)\vee 0.
\end{align*}
This inequality, together with \eqref{eq:norm1-z},
yields \eqref{eq:newmg-term1}.

{\bf Step 2.} 
Consider the second $\sum_{i=1}^n(Y_i-Z_i)$ in \eqref{eq:decompose-diff}. 
We will show that
\begin{equation}
\label{eq:Y-Z-bound}
\norm{\sum_{i=1}^n(Y_i-Z_i)}_r
\lesssim_r
\frac{\alpha n^{1/2}}{\orf(\alpha/2\fcon)^{1/(r\vee2)}}\idkt{\alpha<\infty}.
\end{equation}
Indeed, this inequality is trivial for $\alpha=\infty$.
When $\alpha<\infty$, note that $\{Y_i-Z_i\}_{i=1}^n$ is a martingale difference sequence.
For $r\ge 1$, by Burkholder's inequality, 
\begin{align*}
\norm{\sum_{i=1}^n(Y_i-Z_i)}_r
\lesssim_r
\norm{
\sum_{i=1}^n(Y_i-Z_i)^2}_{r/2}^{1/2}
\end{align*}
Further, by H\"older's inequality, Jensen's inequality, and the fact
$Y_i-Z_i= Y_i\mathbbm{1}_{|Y_i|>\alpha/2}-E[Y_i\mathbbm{1}_{|Y_i|>\alpha/2}|\ff_{i-1}]$,
 we get, for $r\ge 2$, 
\begin{align*}
\norm{
\sum_{i=1}^n(Y_i-Z_i)^2}_{r/2}
\le 
n\bnorm{\frac{1}{n}\sum_{i=1}^n|Y_i-Z_i|^r}_1^{2/r}\le
4n\bnorm{\frac{1}{n}\sum_{i=1}^n|Y_i|^r\idkt{|Y_i|>\alpha/2}}_1^{2/r}.
\end{align*}
Since $|Y_i|^r\idkt{|Y_i|>\alpha/2}
\le 
\orf(|Y_i|/\fcon)(\alpha/2)^r/\orf(\alpha/2\fcon)$, we have
\begin{equation}
\label{eq:noname}
\bnorm{\frac{1}{n}\sum_{i=1}^n|Y_i|^r\idkt{|Y_i|>\alpha/2}}_1
\le 
\frac{(\alpha/2)^r}{n\orf(\alpha/2\fcon)}\sum_{i=1}^n\ee{\orf(|Y_i|/2\fcon)}
\le \frac{(\alpha/2)^r}{\orf(\alpha/2\fcon)}
\end{equation}
where the last inequality used the definition of $\onorm{Y}$.
Hence
\[
\norm{
\sum_{i=1}^n(Y_i-Z_i)^2}_{r/2}
\le
\frac{\alpha^2n}{\orf(\alpha/2\fcon)^{2/(r\vee2)}}.
\]
Inequality \eqref{eq:Y-Z-bound} is proved.

\medskip
{\bf Step 3.} Consider the last term in \eqref{eq:decompose-diff}. By definition \eqref{eq:def-yhat-2}, 
\[
\norm{\hat{Y}_{n+1}}_r
\lesssim_r
\bnorm{\big(1-\sum_{i=1}^{T-1}\sigma_i^2(Z)\big)^{1/2}}_r.
\]

%

When $T\le n$, we have
\[
0\le 1-\sum_{i=1}^{T-1}\sigma_i^2(Z)\le 1-\sum_{i=1}^{T}\sigma_i^2(Z)+\max_{i=1}^n\sigma_i^2(Z)<\max_{i=1}^n\sigma_i^2(Z).
\]
If $T=n+1$,  then
\[
\Abs{1-\sum_{i=1}^{T-1}\sigma_i^2(Z)}
\le 
\Abs{1-\sum_{i=1}^{n}\sigma_i^2(Y)}
+
\Abs{\sum_{i=1}^{n}\sigma_i^2(Y)-\sum_{i=1}^{n}\sigma_i^2(Z)}.
\]
Hence, we have
\begin{equation}\label{eq:240530-1}
\norm{\hat{Y}_{n+1}}_r
\le 
\norm{\max_i^n\sigma_i(Z)}_r+K_r+
\bnorm{\sum_{i=1}^{n}\sigma_i^2(Y)-\sigma_i^2(Z)}_{r/2}^{1/2}.
\end{equation}

{\bf Step 4.} 
Let us bound the third term in \eqref{eq:240530-1}. We will show that
\begin{equation}
\label{eq:last-term-bd}
\bnorm{\sum_{i=1}^{n}\sigma_i^2(Y)-\sigma_i^2(Z)}_{r/2}
\lesssim
\frac{n\alpha^2}{\orf(\alpha/2\fcon)^{2/(r\vee 2)}}\idkt{\alpha<\infty}.
\end{equation}
This is trivial when $\alpha=\infty$. For $\alpha<\infty$,
recall the notation $E_i$ in \eqref{eq:def-ei}. Then
\begin{align*}
\sigma_i^2(Y)-\sigma_i^2(Z)
=E_i[Y_i^2\idkt{|Y_i|>\alpha/2}]+E_i[Y_i\idkt{|Y_i|>\alpha/2}]^2
\quad\forall 1\le i\le n.
\end{align*}	
Hence, by H\"older's inequality and Jensen's inequality, for $r\ge 2$, 
\begin{align*}
\bnorm{\sum_{i=1}^{n}\sigma_i^2(Y)-\sigma_i^2(Z)}_{r/2}
&\le
2\bnorm{\sum_{i=1}^{n}E_i[Y_i^2\idkt{|Y_i|>\alpha/2}]}_{r/2}\\
&\le
2n\bnorm{\frac{1}{n}\sum_{i=1}^n|Y_i|^r\idkt{|Y_i|>\alpha/2}}_1^{2/r}\by{\eqref{eq:noname}}\lesssim
\frac{n\alpha^2}{\orf(\alpha/2\fcon)^{2/r}}.
\end{align*}
Display \eqref{eq:last-term-bd} is proved.

{\bf Step 5.} 
By \eqref{eq:240530-1}, \eqref{eq:last-term-bd}, we have arrived at
\[
\norm{\hat{Y}_{n+1}}_r
\lesssim
\norm{\max_i^n\sigma_i(Z)}_r+K_r+\frac{\alpha n^{1/2}}{\orf(\alpha/2\fcon)^{1/(r\vee2)}}\idkt{\alpha<\infty}.
\]
This inequality, together with \eqref{eq:decompose-diff}, \eqref{eq:newmg-term1}, \eqref{eq:Y-Z-bound}, yields \eqref{eq:l2-bd-diff-tr-elon}.
%
%
\end{proof}

\begin{proof}
[Proof of Theorem~\ref{thm:ww3}]
Consider $\alpha=\infty$ and recall the definition of $\hat{Y}=\hat{Y}^{(\infty)}$ in Definition~\ref{def:trunc-mart}. Note that $Z_i^{(\infty)}=Y_i$ for $1\le i\le n$.

Since $\hat{Y}_{n+1}$ is normal conditioning on $\ff_n$,
by \eqref{eq:w3-prop} in  Proposition~\ref{prop:v=1}  and  Remark~\ref{rmk:ww-normal-role}, we have
\[
\zz_3(\hat{X}_{n+1},\sn)
\lesssim\frac{1}{s_n^3}\sum_{i=1}^n\ee{|\hat{Y}_i|^3}
\le\frac{1}{s_n^3}\sum_{i=1}^n\ee{|Z_i|^3}
=
\frac{1}{s_n^3}\sum_{i=1}^n\ee{|Y_i|^3}.
\]
By \eqref{eq:rio}, this implies $\ww_3(\hat{X}_{n+1},\sn)\lesssim L_3$. 
Further, applying  Proposition~\ref{prop:trunc}\eqref{item:prop-trunc-2} to the case $\alpha=\infty$, we get
\[
\ww_3(X_n,\sn)
\lesssim
\ww_3(\hat{X}_{n+1},\sn)+L_3+\norm{V_n^2-1}_{3/2}^{1/2}.
\]
Theorem~\ref{thm:ww3} is proved.
\end{proof}

\subsection{Proof of Theorem~\ref{thm:ww1-new} ($\ww_1$ bounds)}\label{subsec:pf-w1}


In what follows we let $\alpha\in(0,\infty),\scon>0$ be  constants to be determined later. Let the mds $\hat{Y}=\hat{Y}^{(\alpha)}$ be as in Definition~\ref{def:trunc-mart},  and set 
\begin{equation}\label{eq:def-kappa}
\beta=\sqrt{2}\alpha.
\end{equation}


Throughout this subsection, we simply write
\begin{align}
\sigma_i^2&=\sigma_i^2(\hat Y)=E[\hat Y_i^2|\ff_{i-1}], \quad i\ge 1\nn\\
\lambda_m^2&=\sum_{i=m+1}^{n+1}\sigma_i^2+\tbd^2 s_n^2, \quad 0\le m\le n+1.\label{eq:def-lambda}
\end{align}	
Note that $\lambda_0^2=(1+\tbd^2)s_n^2$, and $\lambda_m^2=\lambda_0^2-\sum_{i=1}^m\sigma_i^2$ is $\ff_{m-1}$-measurable for $m\ge 1$.

Recall that $\hat{Y}_{n+1}$ is normaly distributed given $\ff_{i-1}$. 
By \eqref{eq:var-xhat}, Proposition~\ref{prop:v=1}, and Remark~\ref{rmk:ww-normal-role},  for $r\in\{1,2\}$, $\alpha>0,\tbd=\sqrt{2}\alpha$,
\begin{equation}\label{eq:wr-elongated}
\ww_r(\hat{X}_{n+1},\sn)
\lesssim
\alpha^r+s_n^{-r}E\sum_{i=1}^{n}\frac{|\hat Y_i|^3}{\lambda_{i}^{3-r}}.
\end{equation}

The following estimate of the summation within \eqref{eq:wr-elongated} will be crucially employed in the derivation of our $\ww_r$ bounds, $r=1,2$. 

\begin{proposition}
\label{prop:young-decoup}
Let $\orf\in[0,\infty)^{[0,\infty)}$ be an N-function with $\orf\succcurlyeq x^2$, 
and set $\fcon=\onorm{Y}$. 
Recall $\alpha,\hat{Y}, \lambda_i$ above, and $\tbd=\sqrt{2}\alpha$. 
Then, for any $B, k>0$,
\begin{equation*}
E\sum_{i=1}^n\frac{|\hat{Y}_i|^3}{\lambda_i^k}\idkt{\lambda_i^2\le B}
\lesssim
\fcon\scon n+\fcon\scon\alpha^{-2}\int_{\alpha^2}^{(1+\alpha^2)\wedge B} \orf_*(\tfrac{\alpha^2}{\scon t^{k/2}})\dd t,\quad\forall \scon>0.
\end{equation*}
\end{proposition}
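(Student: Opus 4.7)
The plan is to apply Young's inequality $xy\le\orf(x)+\orf_*(y)$ \emph{inside} the conditional expectation $E_i$. The key device is the factorization $|\hat Y_i|^3/\lambda_i^k = \hat Y_i^2\cdot(|\hat Y_i|/\lambda_i^k)$: after Young's is applied to the $|\hat Y_i|/\lambda_i^k$ factor, the remaining $\hat Y_i^2$ acts as a weight that, upon conditioning, becomes $\sigma_i^2$ in front of the $\orf_*$ term (while being bounded by $\alpha^2$ in front of the $\orf$ term). This $\sigma_i^2$ is precisely what will convert the resulting discrete sum into the claimed integral via a Riemann-sum comparison.

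Concretely, for any $A>0$ Young's gives $\tfrac{|\hat Y_i|}{2\fcon}\cdot\tfrac{A}{\lambda_i^k}\le\orf(|\hat Y_i|/(2\fcon))+\orf_*(A/\lambda_i^k)$; I would multiply by $2\fcon\hat Y_i^2/A$, take $E_i$ (noting that $\orf_*(A/\lambda_i^k)$ is $\ff_{i-1}$-measurable, so $E_i[\hat Y_i^2\,\orf_*(A/\lambda_i^k)]=\sigma_i^2\,\orf_*(A/\lambda_i^k)$, and bound $\hat Y_i^2\le\alpha^2$ in the $\orf$ piece), and then set $A=\alpha^2/\scon$ to obtain
\[
E_i\!\left[\tfrac{|\hat Y_i|^3}{\lambda_i^k}\right]\le 2\fcon\scon\,E_i\!\left[\orf\!\left(\tfrac{|\hat Y_i|}{2\fcon}\right)\right] + \frac{2\fcon\scon}{\alpha^2}\,\sigma_i^2\,\orf_*\!\left(\tfrac{\alpha^2}{\scon\lambda_i^k}\right).
\]
Summing over $1\le i\le n$ against $\idkt{\lambda_i^2\le B}$ and taking total expectation, the first piece is $\le 2\fcon\scon n$: indeed $|\hat Y_i|\le|Z_i|$ for $1\le i\le n$, and Lemma~\ref{lem:orfnorm-z} gives $\onorm{Z}\le 2\fcon$, so by the very definition of the Orlicz norm $\sum_i E[\orf(|Z_i|/(2\fcon))]\le n$. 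This delivers the term $\fcon\scon n$ in the bound.

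The main obstacle is the Riemann-sum comparison $\sum_i\sigma_i^2\,\orf_*(\alpha^2/(\scon\lambda_i^k))\idkt{\lambda_i^2\le B}\lesssim\int_{\alpha^2}^{(1+\alpha^2)\wedge B}\orf_*(\alpha^2/(\scon t^{k/2}))\,dt$. The naive comparison on $t\in[\lambda_i^2,\lambda_{i-1}^2]$ (length $\sigma_i^2$) goes the \emph{wrong} way, since the decreasing integrand $\orf_*(\alpha^2/(\scon t^{k/2}))$ attains its maximum on this interval exactly at $t=\lambda_i^2$. My fix is a \emph{shift by $\alpha^2$}: from $\sigma_i^2\le\alpha^2$ one has $\lambda_i^k\ge(t-\alpha^2)^{k/2}$ for $t\in[\lambda_i^2,\lambda_{i-1}^2]$, so monotonicity of $\orf_*$ and the substitution $u=t-\alpha^2$ yield
\[
\sigma_i^2\,\orf_*\!\left(\tfrac{\alpha^2}{\scon\lambda_i^k}\right)\le\int_{\lambda_i^2-\alpha^2}^{\lambda_{i-1}^2-\alpha^2}\orf_*\!\left(\tfrac{\alpha^2}{\scon u^{k/2}}\right)du.
\]
The shifted intervals share endpoints and telescope into $[\lambda_n^2-\alpha^2,\lambda_0^2-\alpha^2]\subseteq[\alpha^2,1+\alpha^2]$, where I use (assuming WLOG $s_n=1$) $\lambda_0^2=(1+\tbd^2)s_n^2=1+2\alpha^2$ and $\lambda_i^2\ge\tbd^2 s_n^2=2\alpha^2$. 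The indicator $\idkt{\lambda_i^2\le B}$ further trims the upper limit to $(1+\alpha^2)\wedge B$, and combining with the preceding step completes the proof.
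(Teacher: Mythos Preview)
Your argument is correct and follows essentially the same route as the paper. The only cosmetic differences are that the paper places $\hat Y_i^2$ \emph{inside} $\orf_*$ in Young's inequality and then extracts it via $\orf_*\succcurlyeq x$ (yielding the identical bound you obtain by keeping $\hat Y_i^2$ as an outer weight), and that the paper phrases the Riemann-sum step through the stopping times $\tau_t$ with the estimate $\lambda_{\tau_t}^2\ge 1+\alpha^2-t$, which is exactly your shift-by-$\alpha^2$ observation in different notation.
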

\begin{proof}
Without loss of generality, assume $s_n=1$. 

Recall that $\lambda_i$ is $\ff_{i-1}$-measurable. Recall that $E_i[\cdot]=\ee{\cdot|\ff_{i-1}}$ as in \eqref{eq:def-ei}.

By Young's inequality \eqref{eq:young},  for any $\scon>0$,
\[
E\sum_{i=1}^{n}
\frac{E_i[|\hat Y_i|^3]}{2\fcon\scon\lambda_i^k}\idkt{\lambda_i^2\le B}
\le 
E\sum_{i=1}^{n}E_i\big[\orf(\tfrac{|\hat{Y}_i|}{2\fcon})+\orf_*(\tfrac{\hat{Y}_i^2}{\scon\lambda_i^k})\idkt{\lambda_i^2\le B}\big].
\]
Note that $\orf_*$ is an N-function and so $\orf_*\succcurlyeq x$. Using the fact that $\orf_*\succcurlyeq x$ and that $|\hat{Y}_i|\le\alpha$ for $1\le i\le n$, we have, almost surely,
\[
\orf_*(\tfrac{\hat{Y}_i^2}{\scon\lambda_i^k})
\le \tfrac{\hat{Y_i}^2}{\alpha^2}\orf_*(\tfrac{\alpha^2}{\scon\lambda_i^k}),\quad\forall 1\le i\le n.
\]
Thus we further have (Note that $\hat{Y}_i=0$ for $T\le i\le n$.)
\begin{align*}
E\sum_{i=1}^{n}
\frac{E_i[|\hat Y_i|^3]}{2\fcon\scon\lambda_i^k}\idkt{\lambda_i^2\le B}
&\le  
E\sum_{i=1}^n\orf(\tfrac{|Z_i|}{2\fcon})+E\sum_{i=1}^{n}\tfrac{E_i[\hat{Y}_i^2]}{\alpha^2}\orf_*(\tfrac{\alpha^2}{\scon\lambda_i^k})\idkt{\lambda_i^2\le B}\\
&\by{Lemma~\ref{lem:orfnorm-z}}\le
n+E\sum_{i=1}^{T-1}\tfrac{\sigma_i^2}{\alpha^2}\orf_*(\tfrac{\alpha^2}{\scon\lambda_i^k})\idkt{\lambda_i^2\le B}\quad\forall\scon>0.
\end{align*}

Next, we will show the following integral bound:
\begin{align}
\label{eq:w1-integral}
E\sum_{i=1}^{T-1} \sigma_i^2\orf_*(\tfrac{\alpha^2}{\scon\lambda_{i-1}^k})\idkt{\lambda_i^k\le  B}
\le 
\int_{\alpha^2}^{(1+\alpha^2)\wedge B} \orf_*(\tfrac{\alpha^2}{\scon t^{k/2}})\dd t.
\end{align}
Recall the  sequence of stopping times $\{\tau_t\}_{0\le t\le1}$ defined in \eqref{def:stoppingt}.
Then $\tau_t=m$ if and only if $t\in(\sum_{i=1}^{m-1}\sigma_i^2,\sum_{i=1}^m\sigma_i^2]$.  
Moreover, recalling that $\tbd^2=2\alpha^2$,
\[
\lambda_{\tau_t}^2=1+\tbd^2-\sum_{i=1}^{\tau_t}\sigma_i^2\ge 1+\tbd^2-t-\alpha^2=1+\alpha^2-t.
\]
Thus,  for any $\alpha>0$, writing $v_m^2:=\sum_{i=1}^m\sigma_i^2$,
\begin{align*}
\bee{\sum_{i=1}^{T-1} \sigma_i^2\orf_*(\tfrac{\alpha^k}{\scon\lambda_{i}^k})\idkt{\lambda_i^2\le  B}}
&=E\sum_{i=1}^{T-1}\int_{v_{i-1}^2}^{v_i^2}\orf_*(\tfrac{\alpha^2}{\scon\lambda_{\tau_t}^k})\idkt{\lambda_{\tau_t}^2\le B}\dd t\\
&\le E\int_0^1 \orf_*(\tfrac{\alpha^2}{\scon(1+\alpha^2-t)^{k/2}})\idkt{1+\alpha^2-t\le  B}\dd t\\
&\le 
\int_{\alpha^2}^{(1+\alpha^2)\wedge B} \orf_*(\tfrac{\alpha^2}{\scon t^{k/2}})\dd t.
\end{align*}
Inequality \eqref{eq:w1-integral} is proved.  The Proposition follows.
\end{proof}

To prove Theorem~\ref{thm:ww2} and Theorem~\ref{thm:ww3}, we will need the following lemma to compare N-functions to power functions.
\begin{lemma}\label{lem:succsim-domination}
Let $p>1$ and let $q:=p/(p-1)$ denote its H\"older conjugate.  Let $\orf$ be an N-function.
Set $C_{\orf}=\max_{x,y>0}\frac{\orf^{-1}(x)\orf_*^{-1}(x)/x}{\orf^{-1}(y)\orf_*^{-1}(y)/y}$.
\begin{enumerate}[(1)]
\item\label{item:lem-succ} If  $\orf\preccurlyeq x^p$, then
\[
\frac{\orf_*(s)}{s^q}\ge C_{\psi}^{-q}\frac{\orf_*(t)}{t^q}
\quad\text{ for any }s>t>0.
\]
\item\label{item:lem-prec} If $\orf\succcurlyeq x^p$, then
\[
\frac{\orf_*(s)}{s^q}\le C_{\psi}^{q}\frac{\orf_*(t)}{t^q}
\quad\text{ for any }s>t>0.
\]
\end{enumerate}
Note that $C_{\orf}=1$ if $\orf=x^p$,  and \eqref{eq:orlicz-ineq} guarantees that $1\le C_\orf\le 2$ in general.
\end{lemma}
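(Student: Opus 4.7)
The plan is to translate the order relations for $\orf$ into the corresponding relations for $\orf_*$ via the inverse functions, exploiting the two‑sided identity in \eqref{eq:orlicz-ineq}. The key observation is that $\orf \preccurlyeq x^p$ is equivalent to $y \mapsto \orf(y)/y^p$ being non-increasing, which by a change of variables $y = \orf(x)$ is equivalent to
\[
y \mapsto \orf^{-1}(y)/y^{1/p} \text{ being non-decreasing,}
\]
and similarly $\orf \succcurlyeq x^p$ is equivalent to $\orf^{-1}(y)/y^{1/p}$ being non-increasing. (This uses only monotonicity of $\orf$; both sides of the equivalence express the same inequality with $y_i = \orf(x_i)$.)

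Now set $g(y) := \orf^{-1}(y)\orf_*^{-1}(y)/y$. By \eqref{eq:orlicz-ineq}, $g(y) \in [1,2]$ for every $y > 0$, and the quantity $C_\orf$ defined in the statement is nothing but $\sup_{s,t>0} g(s)/g(t)$, which satisfies $1 \le C_\orf \le 2$. Solving for $\orf_*^{-1}$ gives $\orf_*^{-1}(y) = g(y)\,y/\orf^{-1}(y)$, hence
\[
\frac{\orf_*^{-1}(y)}{y^{1/q}} \;=\; g(y)\cdot\frac{y^{1/p}}{\orf^{-1}(y)} \;=\; \frac{g(y)}{\orf^{-1}(y)/y^{1/p}},
\]
using $1 - 1/q = 1/p$. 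Taking the ratio for $s > t > 0$,
\[
\frac{\orf_*^{-1}(s)/s^{1/q}}{\orf_*^{-1}(t)/t^{1/q}}
\;=\; \frac{g(s)}{g(t)} \cdot \frac{\orf^{-1}(t)/t^{1/p}}{\orf^{-1}(s)/s^{1/p}}.
\]

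Under the hypothesis of \eqref{item:lem-succ}, the second factor on the right is $\le 1$ and the first is $\le C_\orf$, yielding $\orf_*^{-1}(s)/s^{1/q} \le C_\orf\cdot \orf_*^{-1}(t)/t^{1/q}$; under the hypothesis of \eqref{item:lem-prec}, both inequalities reverse and we obtain $\orf_*^{-1}(s)/s^{1/q} \ge C_\orf^{-1}\cdot \orf_*^{-1}(t)/t^{1/q}$. Finally I would substitute $u = \orf_*^{-1}(s),\ v = \orf_*^{-1}(t)$ (so that $s > t$ iff $u > v$, since $\orf_*^{-1}$ is strictly increasing), which converts $\orf_*^{-1}(y)/y^{1/q}$ into $u/\orf_*(u)^{1/q}$; applying the order-reversing map $x \mapsto x^{-q}$ to both sides turns $\le C_\orf$ into $\ge C_\orf^{-q}$ on $\orf_*(u)/u^q$ versus $\orf_*(v)/v^q$, and analogously for the other direction.

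No step is genuinely hard; the only thing to be careful about is that the bound on $g(s)/g(t)$ is collected into $C_\orf$ rather than naively replaced by $2$, which is what allows the sharp constants $C_\orf^{\pm q}$ and in particular recovers the equality $C_\orf = 1$ in the pure power case $\orf = x^p$.
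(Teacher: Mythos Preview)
Your proof is correct and follows essentially the same route as the paper's own argument: both translate $\orf\preccurlyeq x^p$ into monotonicity of $y\mapsto \orf^{-1}(y)/y^{1/p}$, use the identity $\orf_*^{-1}(y)=g(y)\,y/\orf^{-1}(y)$ coming from \eqref{eq:orlicz-ineq} to control $\orf_*^{-1}(y)/y^{1/q}$, and then substitute back via $u=\orf_*^{-1}(s)$. Your write-up is in fact somewhat more careful in tracking the exact constant $C_\orf$ throughout.
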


\begin{proof}[Proof of Theorem~\ref{thm:ww1-new}:]
Without loss of generality, assume $s_n=1$.  Set $\fcon=\onorm{Y}$. 

\noindent{\bf Part 1 (Proof of Theorem~\ref{thm:ww1-new}\eqref{item:thm-ww1-new-1}
 for general $\orf$).}

Since $\orf_*(x)/x$ is increasing,  we have $t\orf_*(\tfrac{\alpha^2}{\scon t})\le\alpha^2\orf_*(\tfrac{\alpha^2}{\scon\alpha^2})$ for $t\ge \alpha^2$.  Hence
\begin{align*}
\int_{\alpha^2}^{1+\alpha^2} \orf_*(\tfrac{\alpha^2}{\scon t})\dd t
\le 
\int_{\alpha^2}^{1+\alpha^2}\frac{\alpha^2}{t} \orf_*(\tfrac{1}{\scon})\dd t
= \alpha^2\orf_*(\tfrac{1}{\scon})\log(1+\tfrac{1}{\alpha^2}).
\end{align*}
This inequality, together with  Proposition~\ref{prop:young-decoup}, yields
\begin{align*}  
E\sum_{i=1}^{n} \frac{|\hat{Y}_i|^3}{\lambda_{i-1}^2}
&\lesssim 
\fcon\scon n+\fcon\scon\orf_*(\tfrac{1}{\scon})\log(1+\tfrac{1}{\alpha^2}).
\end{align*}
Taking $\scon=1\big/\orf_*^{-1}(n)$ in the above inequality, we obtain
\begin{align}
\label{eq:240523-1}
E\sum_{i=1}^{T-1} \frac{|\hat{Y}_i|^3}{\lambda_{i-1}^2}\lesssim
\fcon\scon n \log(e+\tfrac{1}{\alpha^2})
\lesssim
\fcon\orf^{-1}(n)\log(e+\tfrac{1}{\alpha^2})
\end{align}
where in the last inequality we used the fact 
\begin{equation}\label{eq:rho-upper}
\scon=1\big/\orf_*^{-1}(n)
\by{\eqref{eq:orlicz-ineq}}\le\tfrac{1}{n}\orf^{-1}(n).
\end{equation}
Further, putting 
\begin{equation}\label{eq:alpha-value}
\alpha=2\fcon\orf^{-1}(n)=2L_\orf,
\end{equation}
then inequality \eqref{eq:240523-1} and \eqref{eq:wr-elongated} imply  
\[
\ww_1(\hat{X}_{n+1},\sn)
\lesssim\alpha\log(e+\tfrac{1}{L_\orf^2}). 
\]
This inequality, together with Proposition~\ref{prop:trunc}\eqref{item:prop-trunc-1}, yields (recall $K_1$ in \eqref{eq:def-K})
\begin{align*}
\ww_1(X_n,\sn)
\lesssim 
\alpha\log(e+\tfrac{1}{L_\orf^2})+\frac{n^{1/2}\alpha}{\orf(\alpha/2\fcon)^{1/2}}+K_1
\by{\eqref{eq:alpha-value}}\lesssim
\alpha\log(e+\tfrac{1}{L_\orf^2})+K_1.
\end{align*}	
 Theorem~\ref{thm:ww1-new}\eqref{item:thm-ww1-new-1} is proved.

{\bf Part 2 (Proof of
 Theorem~\ref{thm:ww1-new}\eqref{item:thm-ww1-new-2} for $\orf\preccurlyeq x^p$, $p>2$).}
 
 Similar to the previous step, we will first derive a bound for the integral in \eqref{eq:w1-integral}. The key observation is that we can compare $\orf_*$ to $(x^p)_*$ instead of $1/x$ (thanks to Lemma~\ref{lem:succsim-domination}). 

Let $q:=p/(p-1)$ denote the H\"older conjugate of $p>2$.  By Lemma~\ref{lem:succsim-domination}, 
$\orf_*(\tfrac{\alpha^2}{\scon t})\le C_\orf^q(\tfrac{\alpha^2}{t})^q\orf_*(\tfrac{\alpha^2}{\scon\alpha^2})$ for $t>\alpha^2$. Hence 
\begin{equation}\label{eq:general-p}
\int_{\alpha^2}^{1+\alpha^2} \orf_*(\tfrac{\alpha^2}{\scon t})\dd t
\le 
\frac{C_\orf^q}{q-1}\alpha^2\orf_*(\tfrac{1}{\scon}).
\end{equation}
This inequality, together with  Proposition~\ref{prop:young-decoup}, yields (Note that $C_\orf^q\le 2^2$.)
\begin{align*}  
E\sum_{i=1}^{n} \frac{|\hat{Y}_i|^3}{\lambda_{i-1}^2}
&\lesssim
\fcon\scon n+\tfrac{1}{q-1}\fcon\scon\orf_*(\tfrac{1}{\scon}).
\end{align*}
Putting $\scon=1\big/\orf_*^{-1}(n)$, this inequality becomes
\begin{align}\label{eq:y3-bound-lp-general}
E\sum_{i=1}^{n} \frac{|\hat{Y}_i|^3}{\lambda_{i-1}^2}
\lesssim
p\fcon\scon n
\by{\eqref{eq:rho-upper}}\lesssim
p\fcon\orf^{-1}(n).
\end{align}	
Hence, taking $\alpha=2\fcon\orf^{-1}(n)=2L_\orf$, inequalities \eqref{eq:y3-bound-lp-general}
 and \eqref{eq:wr-elongated} imply  
\[
\ww_1(\hat{X}_{n+1},\sn)\lesssim p\alpha.
\]
This inequality, together with Proposition~\ref{prop:trunc}\eqref{item:prop-trunc-1}, yields  Theorem~\ref{thm:ww1-new}\eqref{item:thm-ww1-new-2}.
\end{proof}

\subsection{Proof of Theorem~\ref{thm:ww2} ($\ww_2$ bounds)}\label{subsec:pf-w2}

In some sense, our proof of the $\ww_2$ bound in Theorem~\ref{thm:ww2} is an interpolation between the proofs of Theorem~\ref{thm:ww1-new} and Theorem~\ref{thm:ww3}. We observe that, to bound 
$E\sum_{i=1}^n|Y_i|^3/\lambda_i$, a better estimate than Proposition~\ref{prop:young-decoup}
can be achieved by simply bounding $|Y_i|^3/\lambda_i$ by $|Y_i|^3/C$ when $\lambda_i$ is larger than some ``threshold" $C$.
\medskip

\begin{proof}
[Proof of Theorem~\ref{thm:ww2}\eqref{item:thm-ww2-1}:] Without loss of generality, assume $s_n=1$. Set $\fcon=\onorm{Y}$. 

Define the mds $\hat{Y}=\hat{Y}^{(\infty)}$ and the  martingale $\{\hat{X}_i\}$ as in Definition~\ref{def:trunc-mart}. Note that $\hat{Y}_i=Y_i$ for all $i<T$.  

Let $\tbd>0$ be a constant to be determined later, and recall the notations $\sigma_i,\lambda_i$ in \eqref{eq:def-lambda}.
Since $\orf\succcurlyeq x^2$ and $\lambda_i\ge\tbd$ for $1\le i\le n+1$, we have
$\lambda_i^2/\orf(\lambda_i/\fcon)\le\tbd^2/\orf(\tbd/\fcon)$ for all $i$. 
 Applying Proposition~\ref{prop:v=1} and Remark~\ref{rmk:ww-normal-role}, we get
\begin{align*}
\zz_2(\hat{X}_{n+1},\sn)
&\lesssim 
\tbd^2+E\sum_{i=1}^{n}\frac{\lambda_i^2\orf(|\hat Y_i|/\fcon)}{\orf(\lambda_i/\fcon)}\\
&\lesssim
\tbd^2+E\sum_{i=1}^{n}\frac{\tbd^2\orf(|Y_i|/\fcon)}{\orf(\tbd/\fcon)}
\\&
\lesssim
\tbd^2+\frac{n\tbd^2}{\orf(\tbd/\fcon)}.
\end{align*}
Taking $\tbd=L_\orf=\fcon\orf^{-1}(n)$,  we get 
\[
\zz_2(\hat{X}_{n+1},\sn)\lesssim L_\orf^2.
\]
Finally, by  Proposition~\ref{prop:trunc}\eqref{item:prop-trunc-2} and \eqref{eq:rio}, with $\alpha=\infty$,
\[
\ww_2(X_n,\sn)
\lesssim 
\zz_2(\hat{X}_{n+1},\sn)^{1/2}+L_\orf+\norm{\sum_{i=1}^n\sigma_i^2-1}_1^{1/2}.
\]
Theorem~\ref{thm:ww2}\eqref{item:thm-ww2-1} follows.
\end{proof}

\begin{proof}[Proof of Theorem~\ref{thm:ww2}\eqref{item:thm-ww2-2}:] Without loss of generality, assume $s_n=1$.

Let $\alpha>0$ be a constant to be determined.
We define the mds $\hat{Y}=\hat{Y}^{(\alpha)}$ and the martingale $\{\hat{X}_i\}$ as in Definition~\ref{def:trunc-mart}.   Set $\fcon=\onorm{Y}$. 

\noindent{\bf Step 1.}
Applying Proposition~\ref{prop:v=1} and Remark~\ref{rmk:ww-normal-role} to the case $\tbd=\sqrt{2}\alpha$, we get
\begin{equation}\label{eq:zz2-bd-succ3}
\zz_2(\hat{X}_{n+1},\sn)
\lesssim 
\alpha^2+E\sum_{i=1}^n\frac{|\hat Y_i|^3}{\lambda_i}.
\end{equation}
Let $k>0$ be a constant to be determined later, and define and event 
\[
A_i=\{\lambda_i\le k\alpha\}.
\]
By Proposition~\ref{prop:young-decoup}, 
\begin{equation}\label{eq:sfo-hkg}
E\sum_{i=1}^n\frac{|\hat Y_i|^3}{\lambda_i}\idkt{\lambda_i\le k\alpha}
\lesssim
\fcon\scon n+\fcon\scon\alpha^{-2}\int_0^{k^2\alpha^2}\orf_*(\tfrac{\alpha^2}{\scon t^{1/2}})\dd t
\end{equation}
Since $\orf\succcurlyeq x^3$, by Lemma~\ref{lem:succsim-domination}, 
$t^{3/2}\orf_*(\tfrac{\alpha^2}{\scon t})\lesssim(k\alpha)^{3/2}\orf_*(\tfrac{\alpha^2}{\scon k\alpha})$ for $t\in(0,k\alpha)$. Thus
\begin{align*}
\int_0^{k\alpha}t\orf_*(\tfrac{\alpha^2}{\scon t})\dd t
\lesssim 
(k\alpha)^{3/2}\orf_*(\tfrac{\alpha^2}{\scon k\alpha})
\int_0^{k\alpha}t^{1-3/2}\dd t
\lesssim
(k\alpha)^2\orf_*(\tfrac{\alpha}{\scon k}).
\end{align*}
This inequality, together with \eqref{eq:sfo-hkg}, yields
\[
E\sum_{i=1}^{n}
\frac{|\hat Y_i|^3}{\lambda_i}\idkt{A_i}
\lesssim
\fcon\scon n+\fcon\scon k^2\orf_*(\tfrac{\alpha}{\scon k}).
\]
Taking $\scon$ such that $n=4k^2\orf_*(\tfrac{\alpha}{\scon k})$, i.e.,
\[
\rho=\frac{\alpha/k}{\orf_*^{-1}(n/k^2)}\by{\eqref{eq:orlicz-ineq}}\le \frac{4\alpha k}{n}\orf^{-1}(\frac{n}{4k^2}),
\]
we obtain
\begin{equation}\label{eq:y3lambda-ai}
E\sum_{i=1}^{T-1}
\frac{|\hat Y_i|^3}{\lambda_i}\idkt{A_i}
\lesssim 
\fcon\scon n
\lesssim \fcon\alpha k\orf^{-1}(\frac{n}{4k^2}).
\end{equation}

\noindent{\bf Step 2.}
 By Lemma~\ref{lem:orfnorm-z},
$\norm{\{\hat{Y}_i\}_{i=1}^n}_3^3\lesssim 
\norm{Y}_3^3$. 
Recalling that $A_i^c=\{\lambda_i>k\alpha\}$ and  $\orf\succcurlyeq x^3$, 
\begin{align*}
E\sum_{i=1}^{n}
\frac{|\hat Y_i|^3}{\lambda_i}\idkt{A_i^c}
\le 
E\sum_{i=1}^{n}
\frac{|\hat Y_i|^3}{k\alpha}
\lesssim 
\frac{n\norm{Y}_3^3}{k\alpha}
\lesssim_{\orf(1)} 
\frac{n\fcon^3}{k\alpha}.
\end{align*}
where Lemma~\ref{alem:norm-dominate-ineq} is used in the last inequality. 
This inequality, together with \eqref{eq:y3lambda-ai}, yields
\[
E\sum_{i=1}^{n}
\frac{|\hat Y_i|^3}{\lambda_i}
\lesssim_{\orf(1)} 
\fcon\alpha k\orf^{-1}(\frac{n}{4k^2})
+\frac{n\fcon^3}{k\alpha}.
\]
Let $g(x):=\frac{1}{x}\orf(x)$. 
Choosing $k$ such that $\fcon\alpha k\orf^{-1}(\frac{n}{4k^2})=\frac{n\fcon^3}{k\alpha}$, i.e.,
\[
\frac{1}{\alpha k}=\sqrt{g^{-1}(\tfrac{\alpha^2}{4\fcon^2})/(n\fcon^2)},
\]
we arrive at the bound
\begin{equation*}
E\sum_{i=1}^{n}
\frac{|\hat Y_i|^3}{\lambda_i}
\lesssim_{\orf(1)} 
\fcon^2\sqrt{ng^{-1}(\tfrac{\alpha^2}{4\fcon^2})}.
\end{equation*}

\noindent{\bf Step 3.}
Further, by \eqref{eq:zz2-bd-succ3}, we get
\[
\zz_2(\hat{X}_{n+1},\sn)
\lesssim_{\orf(1)} 
\alpha^2+\fcon^2\sqrt{ng^{-1}(\tfrac{\alpha^2}{\fcon^2})}\,.
\]
Thus, choosing $\alpha=2L_\orf=2\fcon\orf^{-1}(n)$,  by Proposition~\ref{prop:trunc}\eqref{item:prop-trunc-1} and \eqref{eq:rio}, 
\begin{align*}
\ww_2(X_n,\sn)
&\lesssim 
\zz_2(\hat{X}_{n+1},\sn)^{1/2}+L_\orf+\norm{V_n^2-1}_1^{1/2}\\
&\lesssim_{\orf(1)} 
L_\orf+
\fcon\left[ng^{-1}(\orf^{-1}(n)^2)\right]^{1/4}+\norm{V_n^2-1}_1^{1/2}.
\end{align*}
Theorem~\ref{thm:ww2}\eqref{item:thm-ww2-2} is proved.
\end{proof}

\section{Optimality of the $\ww_1$ rates: Proof of Proposition~\ref{prop:optimal}}
\label{sec:optimal}

For any $p>2$,  we will construct a mds $\{Y_i\}_{i=1}^n$ such that $V_n^2=1$ and
$\ww_1(X_n,\sn)\gtrsim_pL_p(Y)$. Note that in our examples,  $L_p(Y)\to 0$ as $n\to\infty$, justifying the optimality of  the term $L_\orf$ in Theorem~\ref{thm:ww1-new}.

We choose 
\[
\alpha=\frac{1}{\log n}.
\]
(Actually for any $p>2$, any $\alpha\gg n^{1/p-1/2}$ so that $\alpha_n\to 0$ as $n\to\infty$ would work.)

\begin{example}\label{ex1}
For $n\ge 3$, let $Y_1,\ldots,Y_{n-2}, Y_n,\xi,\eta$ be independent random variables such that 
\begin{itemize}
\item $Y_n,\xi\sim\sn(0,\alpha_n^2)$, 
\item $Y_1,\ldots,Y_{n-2}$ are i.i.d. $\sn(0,\tfrac{1-2\alpha_n^2}{n-2})$ normal random variables, 
\item  $\eta$ has distribution
\[
P(\eta=\tfrac{1}{2})=\tfrac{4}{5},
\quad
P(\eta=-2)=\tfrac{1}{5},
\]
\end{itemize}
We let $X_m:=Y_1+\ldots+Y_m$ and define a set $A\subset\R$ as
\[
 A=\left\{x\in\R:\cos x\ge\tfrac{1}{2}\right\}=\bigcup_{k\in\Z}\big[(2k-\tfrac13)\pi,(2k+\tfrac13)\pi\big].
\]
Define
\[
Y_{n-1}=\xi\idkt{X_{n-2}\notin\alpha A}
+
\alpha\eta\idkt{X_{n-2}\in\alpha A}.
\] 
\end{example}

Clearly, $\{X_m\}_{m=1}^n$ is a martingale, and 
\[
\sigma_{n-1}^2=\sigma_{n}^2=\alpha^2, \quad
\sigma_i^2=\tfrac{1-2\alpha^2}{n-2} \,\text{ for }i=1,\ldots,n-2.
\]
Of course, $V_n^2=\sum_{i=1}^n\sigma_i^2=1$ almost surely, and 
\[
L_p=\left(
\sum_{i=1}^n \ee{|Y_i|^p}
\right)^{1/p}
\le 
\left(
(n-2)\frac{c}{n^{p/2}}+c\alpha^p
\right)^{1/p}
\lesssim_p\alpha.
\]


\begin{proof}[Proof of Proposition~\ref{prop:optimal}\eqref{item:prop-opt-1}]
Let $\{Y_i\}_{i=1}^n$ be the mds in Example~\ref{ex1}. 
We consider the function 
\[
h(x)=\alpha\sin(\tfrac{x}{\alpha}).
\]
Clearly, $[h]_{0,1}\le 1$. Recall the definition of the function $h_\alpha(x)$ in \eqref{eq:stein}. Then
\begin{equation}\label{eq:calcul-h}
h_\alpha(x)=\alpha\int_{\R}\sin\big(\tfrac{1}{\alpha}(x-\alpha u)\big)\phi(u)\dd u
=h(x)\int_\R\cos u\phi(u)\dd u=\frac{1}{\sqrt{e}}h(x),
\end{equation}
where $\phi(x)=\tfrac{1}{\sqrt{2\pi}}e^{-x^2/2}$ denotes the standard normal density.
Moreover,
\begin{align*}
\ee{h(X_n)-h(\sn)}
=\ee{
h_\alpha(X_{n-1})-h_\alpha(X_{n-2}+\alpha\sn)
}.
\end{align*}
By the definition of $Y_{n-1}$, we have
\begin{align*}
\ee{
h_\alpha(X_{n-1})}
=\ee{h_\alpha(X_{n-2}+\alpha\sn)\idkt{X_{n-2}\notin\alpha A}}+\ee{h_\alpha(X_{n-1}+\alpha\eta)\idkt{X_{n-2}\in\alpha A}}.
\end{align*}
Thus
\begin{align}\label{eq:comp-telesc}
&\ee{h(X_n)-h(\sn)}\\
\MoveEqLeft=
\bee{
\left(
h_\alpha(X_{n-2}+\alpha\eta)-h_\alpha(X_{n-2}+\alpha\sn)
\right)\idkt{X_{n-1}\in\alpha A}}\nn\\
\MoveEqLeft
\by{\eqref{eq:calcul-h}}=\tfrac{\alpha}{\sqrt{e}}
\bee{
\left(\sin\tfrac{X_{n-2}}{\alpha}(\cos\eta-\cos\sn)+\cos\tfrac{X_{n-2}}{\alpha}(\sin\eta-\sin\sn)
\right)\idkt{X_{n-2}\in\alpha A}
}\nn\\
\MoveEqLeft
=\tfrac{\alpha}{\sqrt{e}}\left(
\bee{
\sin\tfrac{X_{n-2}}{\alpha}
\idkt{X_{n-2}\in\alpha A}
}\ee{\cos\eta-\cos\sn}
+\bee{
\cos\tfrac{X_{n-2}}{\alpha}
\idkt{X_{n-2}\in\alpha A}
}\ee{\sin\eta}
\right).\nn
\end{align}
Since the set $A$ is symmetric, i.e., $A=-A$, and $X_{n-2}\sim\sn(0,1-2\alpha^2)$, we get $\bee{
\sin\tfrac{X_{n-2}}{\alpha}
\idkt{X_{n-2}\in\alpha A}
}=0$.  Hence
\begin{align*}
\ee{h(X_n)-h(\sn)}
&=
\tfrac{\alpha}{\sqrt{e}}
\bee{
\cos\tfrac{X_{n-2}}{\alpha}
\idkt{X_{n-2}\in\alpha A}
}\ee{\sin\eta}\\
&\ge 
\tfrac{\alpha}{2\sqrt{e}}P(X_{n-2}\in\alpha A)\ee{\sin\eta}.
\end{align*}
Note that $E[\sin\eta]=\tfrac{4}{5}\sin\tfrac{1}{2}-\tfrac{1}{5}\sin 2>0.2$, and, writing
$c_1:=(1-2\alpha^2)^{-1/2}$,
\begin{align}\label{eq:xina}
P(X_{n-2}\in\alpha A)
&=P(\sn\in c_1\alpha A)\nn\\
&=
\sum_{k\in\Z}
\int_{c_1(6k-1)\alpha\pi/3}^{c_1(6k+1)\alpha\pi/3}\phi(u)\dd u\nn\\
&\ge 
\frac{1}{2}\left(
\int_{0}^{c_1\alpha\pi/3}\phi(u)\dd u+\sum_{k=0}^\infty
\tfrac{2c_1\alpha\pi}{3}\phi(\tfrac{c_1(6k+1)\alpha\pi}{3})
\right)\nn\\
&\ge 
\frac{1}{6}\int_0^\infty\phi(u)\dd u=\frac{1}{12}.
\end{align}
Therefore, $\ee{h(X_n)-h(\sn)}\ge \frac{\alpha}{120\sqrt{e}}$ and so
\[
\ww_1(X_n,\sn)\ge \frac{\alpha}{120\sqrt{e}}\ge CL_p,
\]
justifying the optimality of the power of $L_\orf$ in Theorem~\ref{thm:ww1-new}. 
\end{proof}

We can modify Example~\ref{ex1} above to justify the optimality of the exponent of $\norm{V^2-1}_{1/2}$ in Theorem~\ref{thm:ww1-new}.

\begin{example}\label{ex2}
For $n\ge 3$, let $Y_1,\ldots, Y_{n-2}$ be i.i.d. with $Y_1\sim\sn(0,\frac{1-\alpha^2}{n-2})$. Let 
\[
Y_{n-1}=\alpha\eta\idkt{X_{n-2}\in \alpha A},
\] 
where $X_{n-2}=\sum_{i=1}^{n-1}Y_i$, $\alpha=\alpha_n=1/\log n$ , $\eta$ and the set $A\subset\R$ are as in Example \ref{ex1}. Let $Y_n\sim\sn\left(0,\alpha^2P(\sn\notin\kappa A)\right)$, where $\kappa=\kappa_n=\alpha/\sqrt{1-\alpha^2}$.
\end{example}

\begin{proof}[Proof of Proposition~\ref{prop:optimal}\eqref{item:prop-opt-2}]
Let the mds $\{Y_i\}_{i=1}^n$ be as in Example~\ref{ex2}.

Clearly, $L_3=\left(
\sum_{i=1}^n\ee{|Y_i|^3}
\right)^{1/3}\lesssim \alpha$, and 
\begin{align*}
\norm{V_n^2-1}_{1/2}
&=\alpha^2\bee{|\idkt{X_{n-2}\in\alpha A}-P(X_{n-2}\in\alpha A)|^{1/2}}^2\\
&\asymp \alpha^2P(\sn\notin\kappa A)P(\sn\in\kappa A).
\end{align*}
Hence, by Theorem~\ref{thm:ww1-new}\eqref{item:thm-ww1-new-2}, we get
\begin{equation}\label{eq:counter-2}
\ww_1(X_n,\sn)\lesssim \alpha,
\end{equation}
Next we will show that, there exists $N>0$ such that for $n>N$,
\begin{equation}\label{eq:v2-1}
\norm{V_n^2-1}_{1/2}\asymp \alpha^2.
\end{equation}
Note that the same computation as in \eqref{eq:xina} yields $P(\sn\in\kappa A)\ge 1/12$.
Hence we only need to show that $P(\sn\notin\kappa A)\gtrsim 1$. Indeed,
\begin{align*}
P(\sn\in\kappa A)
&=\sum_{i\in\Z}
\int_{c_1(6i-1)\kappa\pi/3}^{c_1(6i+1)\kappa\pi/3}\phi(u)\dd u\nn\\
&\le 
2\int_0^{c_1\kappa\pi/3}\phi(u)\dd u+2\sum_{i=1}^\infty\frac{2c_1\kappa\pi}{3}\phi(\frac{c_1(6i-1)\kappa\pi}{3})\\
&\le 
2\int_0^{c_1\kappa\pi/3}\phi(u)\dd u+\frac{2}{3}\int_{-c_1\kappa\pi/3}^\infty\phi(u)\dd u\\
&\le \frac{8}{3}\int_0^{c_1\kappa\pi/3}\phi(u)\dd u+\frac{1}{3}\le \frac{1}{2}
\end{align*}
for all $n$ sufficiently big. Inequality \eqref{eq:v2-1} follows. 

It remains to show that
\[
\ww_1(X_n,\sn)\gtrsim\alpha.
\]
To this end, we consider the function $h(x)=\alpha\sin(\tfrac{x}{\alpha})$.  
Define $\lambda>0$ by
\[
\lambda^2=\var(Y_n)=\alpha^2P(\sn\notin\kappa A)\in[\tfrac{1}{2}\alpha^2,\tfrac{11}{12}\alpha^2].
\]
Recall the definition of  $h_\lambda(x)$ in \eqref{eq:stein}.  By the same calculation as in  \eqref{eq:calcul-h}, 
\[
h_{\lambda}(x)=h(x)\int_\R\cos(\tfrac{\lambda}{\alpha}u)\phi(u)\dd u
=h(x)\exp(-\frac{\lambda^2}{2\alpha^2}).
\]
By  symmetry, $E[h(\sn)]=0$ and $\ee{h_\lambda(X_{n-2})\idkt{X_{n-2}\notin\alpha A}}=0$. Hence
\begin{align*}
&\ee{h(X_n)-h(\sn)}\\
&=\ee{h_\lambda(X_{n-2})\idkt{X_{n-2}\notin\alpha A}}
+\ee{h_\lambda(X_{n-2}+\alpha\eta)\idkt{X_{n-2}\in\alpha A}}\\
&=\alpha\exp(-\frac{\lambda^2}{2\alpha^2})\ee{
\cos(\tfrac{X_{n-2}}{\alpha})\sin\eta\idkt{X_{n-2}\in\alpha A}
}\\
&\ge 0.1\alpha\exp(-\frac{\lambda^2}{2\alpha^2})P(X_{n-2}\in\alpha A)\ge c\alpha.
\end{align*}
Display \eqref{eq:counter-2} is proved. Our proof of the Proposition is complete.
\end{proof}

\section{Some open questions}\label{sec:qs}

\begin{enumerate}
\item  
For the $\ms L^\infty$ case (i.e. the mds is uniformly $\ms L^\infty$-bounded), is the typical $\ww_1$ rates $O(n^{-1/2}\log n)$ within Theorem~\ref{thm:ww1-new} optimal? Recall that
it is shown by Bolthausen \cite{Bol-82} that this  rate is  typically optimal for the Kolmogorov distance.

\item 
For the $\ms L^\infty$ case, are the typical $\ww_r$ rates $O(n^{-/(2r)})$, $r\in\{2,3\}$, in Theorems~\ref{thm:ww2} and \ref{thm:ww3} optimal?
 For general $r>3$ and bounded mds, can we get similar bounds  $O(n^{-/(2r)})$, $r>3$?

\item Can we say anything about the optimality of the $\ww_2$ rates in Theorem~\ref{thm:ww2} when
 the mds is $\ms L^p$ integrable, for any $p>2$?
 
\item 
Is there a better (unified) formula than the ``piecewise" $\ww_2$ bound in Theorem~\ref{thm:ww2}  for $\ms L^p$ martingales, $p>2$?

\item As a feature of the $\ww_1,\ww_2$ bounds in Theorems~\ref{thm:ww1-new} and \ref{thm:ww2}, better integrability implies faster (typical) convergence rates for the martingale CLT. But this is no longer the case for the $\ww_3$ estimate in Theorem~\ref{thm:ww3}.  Is it possible to get better $\ww_3$ bounds for general martingales with better integrability than $\ms L^3$?

\item How to obtain the Wasserstein rates for the CLT of multi-dimensional martingales? Can we say something on  the dependence of the rates on the dimension?

\item How to obtain the Wasserstein-$r$ convergence rates, $r>3$, for the martingale CLT in terms of $L_\orf$?
\end{enumerate}

\newpage
\newtheorem{atheorem}{Theorem}
\numberwithin{atheorem}{section}
\newtheorem{alemma}[atheorem]{Lemma}
\newtheorem{acorollary}[atheorem]{Corollary}

\appendix
\section{Appendix}
\subsection{Comparison between Orlicz norms}
Recall the definition of the (mean-)orlicz norm $\onorm{\cdot}$ for a sequence in Definition~\ref{def:orlicz-norm}.
\begin{alemma}
\label{alem:norm-dominate-ineq}
Let $p\ge 1$ and let $Y=\{Y_i\}_{i=1}^n$ be a sequence of random variables with $\norm{Y}_p<\infty$. 
If an N-function $\orf$ satisfies $\orf\succcurlyeq x^p$, then 
\[
\norm{Y}_p\le \left(1+\frac{1}{\orf(1)}\right)^{1/p}\onorm{Y}.
\]
\begin{proof}
Without loss of generality assume $\onorm{Y}=1$.
 Since $\orf/x^p$ is increasing,
\begin{align*}
\ee{|Y_i|^p}
\le 
1+\ee{|Y_i|^p\idkt{|Y_i|>1}}
\le 
1+\frac{1}{\orf(1)}\bee{\orf(|Y_i|)}.
\end{align*}
Thus, by the definition of $\onorm{\cdot}$,
\begin{align*}
\norm{Y}_p^p
=
\frac{1}{n}\sum_{i=1}^n\ee{|Y_i|^p}
\le 
1+\frac{1}{\orf(1)n}\sum_{i=1}^n\ee{\orf(|Y_i|)}
\le 1+\frac{1}{\orf(1)}.
\end{align*}
The lemma follows.
\end{proof}
\end{alemma}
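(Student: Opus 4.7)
The inequality is a routine comparison of moments and Orlicz quantities, so the plan is to expand the definition of $\onorm{Y}$, split the integration on $|Y_i|$, and use the monotonicity $\orf(x)/x^p\uparrow$ to convert $\orf$ back into $x^p$ on the tail.

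\emph{Step 1 (normalize).} By homogeneity of both sides in $Y$ (replacing $Y$ by $Y/c$ scales $\norm{Y}_p$ and $\onorm{Y}$ by $1/c$), assume $\onorm{Y}=1$. The definition of $\onorm{\cdot}$ in \eqref{eq:def-orlicz-norm}, together with monotone convergence, then yields $\tfrac{1}{n}\sum_{i=1}^n E[\orf(|Y_i|)]\le 1$. It suffices to show $\norm{Y}_p^p\le 1+1/\orf(1)$.

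\emph{Step 2 (split and apply monotonicity).} Write
\[
E[|Y_i|^p]=E[|Y_i|^p\idkt{|Y_i|\le 1}]+E[|Y_i|^p\idkt{|Y_i|>1}]\le 1+E[|Y_i|^p\idkt{|Y_i|>1}].
\]
The hypothesis $\orf\succcurlyeq x^p$ means $\orf(x)/x^p$ is non-decreasing on $(0,\infty)$, so for $x\ge 1$ we have $\orf(x)/x^p\ge \orf(1)$, i.e., $x^p\le \orf(x)/\orf(1)$. Therefore
\[
E[|Y_i|^p\idkt{|Y_i|>1}]\le \frac{1}{\orf(1)}E[\orf(|Y_i|)].
\]

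\emph{Step 3 (average and take $p$-th root).} Averaging over $i$ and using $\tfrac{1}{n}\sum_i E[\orf(|Y_i|)]\le 1$ gives $\norm{Y}_p^p\le 1+1/\orf(1)$, and taking the $p$-th root finishes the proof.

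There is no real obstacle here—the only point requiring a touch of care is that the relation $\orf\succcurlyeq x^p$ must be used in the ``increasing ratio'' form, which automatically produces the factor $1/\orf(1)$ from comparing the ratio $\orf(x)/x^p$ at $x\ge 1$ to its value at $x=1$. The constant $(1+1/\orf(1))^{1/p}$ in the statement is precisely what comes out of this dichotomy into $\{|Y_i|\le 1\}$ and $\{|Y_i|>1\}$, so the bound is tight for the argument used.
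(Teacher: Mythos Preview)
Your proof is correct and follows essentially the same route as the paper: normalize to $\onorm{Y}=1$, split $E[|Y_i|^p]$ at the level $|Y_i|=1$, use the monotonicity of $\orf(x)/x^p$ to bound the tail piece by $\orf(|Y_i|)/\orf(1)$, and average. Your explicit appeal to monotone convergence to pass from the infimum definition of $\onorm{Y}$ to $\tfrac{1}{n}\sum_i E[\orf(|Y_i|)]\le 1$ is a nice touch that the paper leaves implicit.
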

\subsection{Regularity of Gaussian smoothing:Proof of Lemma~\ref{lem:smoothing}}
The proof is exactly as in \cite[Lemma 6.1]{DMR-09}.
\begin{proof}
Since $f_\sigma(x)=\int_\R f(x-\sigma u)\phi(u)$, integration by parts yields
\begin{align*}
f_\sigma^{(k)}(x)&=\frac{1}{\sigma^k}\int_\R f^{(\ell)}(x-\sigma u)\phi^{(k-\ell)}(u)\dd u\\
&=\frac{1}{\sigma^{k-\ell}}\int_\R [f^{(\ell)}(x-\sigma u)-f^{(\ell)}(x)]\phi^{(k-\ell)}(u)\dd u, \quad\forall 0\le\ell<k.
\end{align*}	
The lemma follows by taking $\ell=r-1$ and using the fact $[f]_{r-1,1}=1$.
\end{proof}

\subsection{Moment bound of the maximum}
\begin{alemma}
\label{alem:square-by-orlicz}
Let $\orf\succcurlyeq x^r$ be an N-function, $r>0$. 
For any sequence of random variables $Y=\{Y_i\}_{i=1}^n$,  we have
\[
\norm{\max_{i=1}^n |Y_i|}_r\le 2^{1/r}\onorm{Y}\orf^{-1}(n).
\]
\end{alemma}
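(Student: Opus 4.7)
The plan is to reduce to the normalized case $\onorm{Y}=1$, pick the threshold $a=\orf^{-1}(n)$, and split the maximum into a part below $a$ (trivially bounded by $a^r$) and a part above $a$ (where the polynomial growth condition $\orf\succcurlyeq x^r$ lets us dominate $x^r$ by a constant times $\orf(x)$).

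First I would homogeneity-rescale: by the very definition \eqref{eq:def-orlicz-norm} of $\onorm{\cdot}$, dividing $Y$ by $\onorm{Y}$ reduces the claim to showing $\norm{\max_i|Y_i|}_r\le 2^{1/r}\orf^{-1}(n)$ under the assumption $\sum_{i=1}^n E[\orf(|Y_i|)]\le n$. Set $a:=\orf^{-1}(n)$, so $\orf(a)=n$.

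Next, the key pointwise inequality. Since $\orf\succcurlyeq x^r$, the function $x\mapsto \orf(x)/x^r$ is non-decreasing on $(0,\infty)$, so for every $x\ge a$,
\[
\frac{\orf(x)}{x^r}\ge \frac{\orf(a)}{a^r}=\frac{n}{a^r},\qquad\text{i.e.,}\qquad x^r\le \frac{a^r}{n}\,\orf(x).
\]
Then I would split
\[
E\bigl[\max_{i}|Y_i|^r\bigr]
\le a^r+E\bigl[\max_i|Y_i|^r\,\idkt{\max_i|Y_i|\ge a}\bigr]
\le a^r+\sum_{i=1}^n E\bigl[|Y_i|^r\idkt{|Y_i|\ge a}\bigr],
\]
and apply the pointwise bound above inside the last expectation to get
\[
\sum_{i=1}^n E\bigl[|Y_i|^r\idkt{|Y_i|\ge a}\bigr]
\le \frac{a^r}{n}\sum_{i=1}^n E[\orf(|Y_i|)]\le a^r.
\]
Combining yields $E[\max_i|Y_i|^r]\le 2a^r$, hence $\norm{\max_i|Y_i|}_r\le 2^{1/r}\orf^{-1}(n)$, and restoring the normalization gives the lemma.

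There is no real obstacle here; the only thing to watch is that the polynomial domination $\orf\succcurlyeq x^r$ is exactly what is needed to pass from $|Y_i|^r$ above the threshold back to the integrability condition encoded by $\onorm{Y}$. A union-bound or Markov-type argument without this monotonicity would lose the sharp constant $2^{1/r}$; using the monotonicity of $\orf(x)/x^r$ at the single threshold $a=\orf^{-1}(n)$ is what keeps the constant at $2^{1/r}$.
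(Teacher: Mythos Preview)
Your proof is correct and essentially identical to the paper's: normalize to $\onorm{Y}=1$, split $\max_i|Y_i|^r$ at the threshold $\orf^{-1}(n)$, use the monotonicity of $\orf(x)/x^r$ (from $\orf\succcurlyeq x^r$) to dominate the tail by $\tfrac{a^r}{n}\sum_i E[\orf(|Y_i|)]\le a^r$, and combine. The only cosmetic difference is that the paper keeps the threshold $\alpha$ generic and optimizes at the end, whereas you fix $a=\orf^{-1}(n)$ from the outset.
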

\begin{proof}
Without loss of generality, assume $\onorm{Y}=1$. 
For any  $\alpha>0$,
\[
|Y_i|^r\idkt{|Y_i|\ge\alpha}
\le 
\frac{\alpha^r}{\orf(\alpha)}\orf(|Y_i|)\idkt{|Y_i|\ge\alpha}, \quad 1\le i\le n
\]
where we used the fact that $x\mapsto\frac{\orf}{x^r}$ is increasing.
Hence
\begin{align*}
 \ee{\max_{i=1}^n |Y_i|^r}
 &\le \alpha^r+\ee{\max_{i=1}^n |Y_i|^r\idkt{|Y_i|\ge\alpha}}\\
 &\le \alpha^r+\bee{\sum_{i=1}^n\frac{\alpha^r}{\orf(\alpha)}\orf(|Y_i|)}
 \le \alpha^r+\frac{n\alpha^r}{\orf(\alpha)}
 \end{align*}	
 
 Taking $\alpha=\orf^{-1}(n)$, we get $\norm{\max_{i=1}^n |Y_i|}_r^r\le 2\alpha^r$.
\end{proof}

\subsection{Proof of Lemma~\ref{lem:succsim-domination}}

\begin{proof}
We only give the proof of \eqref{item:lem-succ}.  The proof of \eqref{item:lem-prec} is similar.

The case $\orf=x^p$ is trivial, so we only consider the general case $\orf\preccurlyeq x^p$. It suffices to show that, for any $s>t>0$,
\[
\frac{s^{1/q}}{\orf_*^{-1}(s)}\ge \frac{1}{4}\frac{t^{1/q}}{\orf_*^{-1}(t)},
\]
which, writing $F(x):=\orf^{-1}(x)\orf_*^{- 1}(x)$, is equivalent to 
\[
\frac{\orf^{-1}(s)/s^{1/p}}{\orf^{-1}(t)/t^{1/p}}\ge \frac{1}{C_\orf}\frac{F(s)/s}{F(t)/t} \quad\forall s>t>0.
\]
Note that $\orf\preccurlyeq x^p$ implies that the left-side is at least 1. Lemma~\ref{lem:succsim-domination}\eqref{item:lem-succ} follows.
\end{proof}

\subsection{A Burkholder inequality}
\begin{atheorem}
\label{athm:hall-heyde}
Let $\{Y_i\}_{i=1}^n$ be a mds and let $S_m=Y_1+\ldots+Y_m$, $1\le m\le n$. Recall the notation $\sigma_i(Y)^2$ in \eqref{def:notations}. Then, fir $p>0$,
\[
\ee{\max_{i=1}^n|S_n|^p}
\lesssim_p
\ee{\big(\sum_{i=1}^n\sigma_i^2(Y)\big)^{p/2}}+\ee{\max_{i=1}^n|Y_i|^p}.
\]
\end{atheorem}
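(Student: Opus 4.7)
The claim is a classical conditional-quadratic-variation form of Burkholder's inequality (essentially Theorem 2.11 in Hall--Heyde). The strategy I would follow is a two-step reduction: first invoke the usual (unconditional) Burkholder--Davis--Gundy (BDG) inequality to pass from $\max_i|S_i|$ to the quadratic variation $T_n:=\sum_{i=1}^n Y_i^2$, and then swap $T_n$ for the conditional quadratic variation $V_n^2:=\sum_{i=1}^n\sigma_i^2(Y)$ via a second martingale decomposition, paying the $E[\max_i|Y_i|^p]$ price only in this second step. BDG gives, for every $p>0$, the estimate $E[\max_{i\le n}|S_i|^p]\lesssim_p E[T_n^{p/2}]$, so the task becomes bounding $E[T_n^{p/2}]$ by $E[V_n^p]+E[\max_i|Y_i|^p]$. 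Writing $T_n=V_n^2+D_n$ with $D_n:=\sum_{i=1}^n(Y_i^2-\sigma_i^2(Y))$, one observes that $\{D_m\}_{m\ge 1}$ is itself a martingale (its increments have mean zero given $\ff_{i-1}$), and the whole argument revolves around controlling $D_n$.

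For $p\ge 2$ I would apply the elementary inequality $(a+b)^{p/2}\lesssim_p a^{p/2}+b^{p/2}$ to $T_n\le V_n^2+|D_n|$, reducing to a control of $E[|D_n|^{p/2}]$. Since $p/2\ge 1$, BDG applied to the martingale $D$ yields $E[|D_n|^{p/2}]\lesssim_p E[(\sum_i(Y_i^2-\sigma_i^2)^2)^{p/4}]$, and the crude bounds $(Y_i^2-\sigma_i^2)^2\le 2(Y_i^4+\sigma_i^4)$, $\sum_i Y_i^4\le M^2T_n$, $\sum_i\sigma_i^4\le(\max_i\sigma_i^2)V_n^2$ with $M:=\max_i|Y_i|$, combined with Cauchy--Schwarz in $L^{p/2}$, give
\[
E[|D_n|^{p/2}]\lesssim_p \|M\|_p\,E[T_n^{p/2}]^{1/2}+\|\textstyle\max_i\sigma_i\|_p\,\|V_n\|_p.
\]
To close, I would use conditional Jensen ($\sigma_i^p\le E[|Y_i|^p\mid\ff_{i-1}]$, valid for $p\ge 2$) and Doob's $L^q$ inequality applied to the non-negative martingale $i\mapsto E[M^p\mid\ff_i]$ to obtain $\|\max_i\sigma_i\|_p\lesssim\|M\|_p$, and then invoke Young's inequality to absorb the $E[T_n^{p/2}]^{1/2}$ term back into the left-hand side of $E[T_n^{p/2}]\lesssim E[V_n^p]+E[|D_n|^{p/2}]$.

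For $p\in(0,2]$ the situation is structurally different: now $p/2\le 1$ is subadditive, so $T_n^{p/2}\le V_n^p+|D_n|^{p/2}$ is immediate, but BDG on the mds $D_n$ no longer lives at an exponent $\ge 1$, and one must use the Davis form of BDG instead; moreover, Jensen now goes the \emph{wrong} direction ($\sigma_i^p\ge E[|Y_i|^p\mid\ff_{i-1}]$), so the control $\|\max_i\sigma_i\|_p\lesssim\|M\|_p$ needs to be obtained differently, e.g.\ through a Lenglart-type domination of the submartingale $T_n$ by the predictable $V_n^2$. The main obstacle of the proof is twofold: the self-absorption step in the $p\ge 2$ argument requires careful tracking of the $p$-dependent constants, and the $p<2$ regime forces one out of the convenient Burkholder $L^{p/2}$ framework into a more delicate Davis/Lenglart argument. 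Since this is a purely auxiliary ingredient for the paper, in practice I would simply cite Hall--Heyde Theorem~2.11 rather than reproduce the full argument.
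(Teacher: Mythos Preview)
The paper does not prove this theorem at all: it simply cites \cite[Theorem~2.11]{Hall-Heyde-80} and, for a more general version, \cite[Theorem~21.1]{BDG-72}, which is precisely your closing recommendation. Your sketch therefore goes well beyond what the paper provides; it is broadly correct, though the Doob step as phrased (Jensen at exponent $p$, then Doob's $L^q$ inequality on the martingale $i\mapsto E[M^p\mid\ff_i]$) would require $q=1$, where Doob gives only weak-type bounds --- the clean fix is to apply Jensen at exponent $2$ instead ($\sigma_i^2\le E[M^2\mid\ff_{i-1}]$) and then Doob's $L^{p/2}$ maximal inequality, which is legitimate since $p/2>1$ in the regime under discussion.
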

This version is taken from \cite[Theorem~2.11]{Hall-Heyde-80}. For a more general inequality, see \cite[Theorem 21.1]{BDG-72}.


\begin{thebibliography}{99}
\bibitem{Adams-03}
R. A. Adams, J.J.F. Fournier,
\emph{Sobolev Spaces}. Second edition.
Pure Appl. Math. (Amst.), 140. 
Elsevier/Academic Press, Amsterdam, 2003.



\bibitem{Bikjalis-66}
A. Bikjalis,
\emph{Estimates of the remainder term in the central limit theorem}.
Litovsk. Mat. Sb.6(1966), 323-346.

\bibitem{Bobkov-13}
S. G. Bobkov,
\emph{Entropic approach to E. Rio's central limit theorem for $W_2$ transport distance}.
Stat. Probab. Lett. 83(7), 1644-1648 (2013).

\bibitem{Bobkov-18}
S. G. Bobkov,
\emph{Berry-Esseen bounds and Edgeworth expansions in the
central limit theorem for transport distances}.
Probab. Theory Relat. Fields (2018) 170:229-262.

\bibitem{Bol-82}
 E. Bolthausen, 
 \emph{Exact convergence rates in some martingale central limit theorems}.
 Ann. Probab. 10 (1982)672-688.
 
 \bibitem{Bonis-20}
 T. Bonis,
 \emph{Stein's method for normal approximation in Wasserstein distances with application to the multivariate central limit theorem}.
 Probab. Theory Related Fields 178 (2020), no. 3-4, 827--860.
 
 \bibitem{BDG-72}
D. L. Burkholder,  B. J. Davis,  R. F. Gundy,
\emph{Integral inequalities for convex functions of operators on martingales}.
University of California Press, Berkeley, CA, 1972, pp. 223-240.
 

\bibitem{CFP-19}
T. Courtade, M. Fathi, A. Pananjady,
\emph{Existence of Stein kernels under a spectral gap, and discrepancy bounds}.
Ann. Inst. Henri Poincar\'{e} Probab. Stat.55(2019), no.2, 777-790.

 
\bibitem{Dedecker-Merlevede-11}
J. Dedecker, F. Merlev\`ede, 
\emph{Rates of convergence in the central limit theorem for linear statistics of martingale differences}.
Stochastic Process. Appl. 121 (2011) 1013-1043. 

\bibitem{DMR-09}
J. Dedecker, F. Merlev\`ede, E. Rio,
\emph{Rates of convergence for minimal distances in the central limit theorem under projective criteria}.
Electron. J. Probab.14(2009), no. 35, 978-1011.

\bibitem{DMR-22}
J. Dedecker, F. Merlev\`ede, E. Rio,
\emph{Rates of convergence in the central limit theorem for martingales in the non stationary setting}. 
Ann. Inst. Henri Poincar\'e Probab. Stat. 58 (2022), no. 2, 945-966.



\bibitem{DST-14}
L.V. Dung, T.C. Son, N.D. Tien,
\emph{$L_1$ bounds for some martingale central limit theorems}.
Lith. Math. J. 54, 48-60 (2014).

\bibitem{Dvo-72}
A. Dvoretzky,
\emph{Asymptotic normality for sums of dependent random variables}.
Proc. Sixth Berkeley Symp. Math. Statist. Probab. 2 513-535. Univ. California Press, 1972.

\bibitem{EMA-20}
R. Eldan, D. Mikulincer, A. Zhai, 
\emph{The CLT in high dimensions: quantitative bounds via martingale embedding}.
Ann. Probab. 48 (2020), no. 5, 2494-2524.


\bibitem{ElM-Ouchti-07}
 M. El Machkouri, L. Ouchti, 
 \emph{Exact convergence rates in the central limit theorem for a class of martingales}.
 Bernoulli 13 (4) (2007) 981-999.
 
 \bibitem{Esseen-58}
 C.-G. Esseen,
 \emph{On mean central limit theorem}.
 Kungl. Tekn. H\"ogsk. Handl. Stockholm,121(1958)
 
\bibitem{Fan-19}
X. Fan,
\emph{Exact rates of convergence in some martingale central limit theorems}.
J. Math. Anal. Appl. 469 (2019), no. 2, 1028-1044.

\bibitem{FM-20}
X. Fan, X. Ma,
\emph{On the Wasserstein distance for a martingale central limit theorem}.
Statist. Probab. Lett. 167 (2020), 108892, 6 pp. 

\bibitem{FS-22}
X. Fan, Z. Su,
\emph{Rates of convergence in the distances of Kolmogorov and Wasserstein for standardized martingales}. (2023), 
arXiv:2309.08189v1

\bibitem{FSX-19}
X. Fang, Q.-M. Shao, L. Xu,
\emph{Multivariate approximations in Wasserstein distance by Stein's method and Bismut's formula}.
Probab. Theory Related Fields 174(2019), no.3-4, 945-979.

\bibitem{Fang-19}
X. Fang,
\emph{Wasserstein-2 bounds in normal approximation under
local dependence}.
Electron. J. Probab. 24 (2019), no. 35, 1-14.

\bibitem{Fathi-19}
M. Fathi,
\emph{Stein kernels and moment maps}.
Ann. Probab. 47 (2019), no. 4, 2172-2185.

\bibitem{Gra-Haeu-00}
I. Grama, E. Haeusler, 
\emph{Large deviations for martingales via Cram{\'e}r's method}.
Stochastic Process. Appl. 85 (2000) 279-293.
 
\bibitem{Haeusler-84}
E. Haeusler, 
\emph{A note on the rate of convergence in the martingale central limit theorem}.
Ann. Probab. 12 (1984) 635-639. 
 
\bibitem{Haeusler-88}
E. Haeusler, 
\emph{On the rate of convergence in the central limit theorem for martingales with discrete and continuous time}.
Ann. Probab. 16 (1) (1988) 275-299.

\bibitem{HJ-88}
E. Haeusler, K. Joos,
\emph{A nonuniform bound on the rate of convergence in the martingale central limit theorem}.
Ann. Probab.16(1988), no.4, 1699-1720.

\bibitem{Hall-Heyde-80}
 P. Hall, C.C. Heyde, 
 \emph{Martingale Limit Theory and Its Applications}. 
 Academic, New York, 1980.
 
\bibitem{Heyde-Brown-70}
 C.C. Heyde, B.M. Brown, 
 \emph{On the departure from normality of a certain class of martingales}.
  Ann. Math. Stat. 41 (1970)
2161-2165.

\bibitem{Ibra-66}
I. Ibragimov,
\emph{On the accuracy of Gaussian approximation to the distribution functions of sums of independent random variables}.
Theory Probab.
Appl. 11 (1966) 559-579.


\bibitem{Joos-91}
K. Joos,
\emph{Nonuniform convergence rates in the central limit theorem for martingales}. 
J. Multivariate Anal.36(1991), no.2, 297-313.

\bibitem{Joos-93}
K. Joos,
\emph{Nonuniform convergence rates in the central limit theorem for martingales}. 
Studia Sci. Math. Hungar. 28 (1-2) (1993) 145-158.


\bibitem{Kulik-20}
R. Kulik, P. Soulier,
\emph{Heavy-tailed time series}.
Springer Ser. Oper. Res. Financ. Eng.
Springer, New York, 2020.

\bibitem{LA-23}
T. Liu, M. Austern,
\emph{Wasserstein-p bounds in the central limit theorem under local dependence}.
Electron. J. Probab. 28 (2023), no. 117, 1-47.



\bibitem{Mourrat-13}
 J.C. Mourrat, 
 \emph{On the rate of convergence in the martingale central limit theorem}.
 Bernoulli 19 (2) (2013) 633-645.
 
 \bibitem{Ouchti-05}
 L. Ouchti,
 \emph{On the rate of convergence in the central limit theorem for martingale difference
sequences}.
Ann. Inst. Henri
Poincar\'e Probab. Stat. 41 (1) (2005) 35-43.


 

\bibitem{Petrov-95}
V.V. Petrov,
\emph{Limit theorems of probability theory}.
Oxford Stud. Probab., 4. The Clarendon Press, Oxford University Press, New York, 1995.

\bibitem{Rio-98}
E. Rio,
\emph{Distances minimales et distances id\'{e}ales}.
C. R. Acad. Sci. Paris 326 (1998) 1127-1130.

\bibitem{Rio-09}
E. Rio,
\emph{Upper bounds for minimal distances in the central limit theorem}.
 Ann. Inst. Henri Poincar\'e Probab. Stat. 45(3), 802--817 (2009)

\bibitem{Rollin-18}
A. R\"ollin,
\emph{On quantitative bounds in the mean martingale central limit theorem}.
Statist. Probab. Lett.138(2018), 171-176.


\bibitem{Sakhanenko-85}
A. I.  Sakhanenko,
\emph{Estimates in an invariance principle}.
(Russian) Limit theorems of probability theory, 27--44, 175, Trudy Inst. Mat., 5, ``Nauka" Sibirsk. Otdel., Novosibirsk (1985).

\bibitem{vdG-L-13}
S. van de Geer, J. Lederer,
\emph{The Bernstein-Orlicz norm and deviation inequalities}.
Probab. Theory Related Fields 157 (2013), no. 1-2, 225-250.

\bibitem{Villani}
C. Villani,
\emph{Optimal Transport: Old and New}.
Grundlehren Math. Wiss., 338, Springer, Berlin (2009)

\bibitem{VGNA-20}
M. Vladimirova, S. Girard, H. Nguyen, J. Arbel,
\emph{Sub-Weibull distributions: generalizing sub-Gaussian and sub-exponential properties to heavier tailed distributions}.
Stat 9 (2020), e318.


\bibitem{Zhai-18}
A. Zhai,
\emph{A high-dimensional CLT in $W_2$ distance with near optimal convergence rate}.
Probab.Theory Related Fields 170, 1-25 (2018).

\bibitem{Zolo-64}
V. M. Zolotarev,
\emph{On asymptotically best constants in refinements of mean central limit theorems}.
Theory Probab. Appl. 9 (1964) 268-276.
\end{thebibliography}
\end{document}